\def\namedlabel#1#2{\begingroup
    #2%
    \def\@currentlabel{#2}%
    \phantomsection\label{#1}\endgroup
}
\newtheorem{thm}{Theorem}[section]
\newtheorem{lemma}[thm]{Lemma}
\newtheorem{prop}[thm]{Proposition}
\newtheorem{cor}[thm]{Corollary}
\newtheorem{notation}{Notation}
\newtheorem{defn}[thm]{Definition}
\newtheorem{examp}[thm]{Example}
\newtheorem{rmk}[thm]{Remark}
\newenvironment{sequation}{\begin{equation}\small}{\end{equation}}
\def\blfootnote{\gdef\@thefnmark{}\@footnotetext}
\begin{document}

\title{Anomalous time-scaling of extreme events in infinite systems and Birkhoff sums of infinite observables}

\begin{abstract}
We establish quantitative results for the statistical be\-ha\-vi\-our of
\emph{infinite systems}. We consider two kinds of infinite system:

\begin{itemize}
\item[i)] a conservative dynamical system $(f,X,\mu)$ preserving a
  $\sigma$-finite measure $\mu$ such that $\mu(X)=\infty$;

\item[ii)] the case where $\mu$ is a probability measure but we
  consider the statistical behaviour of an observable $\phi\colon
  X\to[0,\infty)$ which is non-integrable: $\int \phi \, d\mu=\infty$.
\end{itemize}

In the first part of this work we study the behaviour of Birkhoff sums
of systems of the kind ii). For certain weakly chaotic systems,
we show that these sums can be strongly oscillating. However, if the system has
superpolynomial decay of correlations or has a Markov
structure, then we show this oscillation cannot happen. In this case we
prove a  general  relation between the  behavior of $\phi $,   the local dimension of $\mu$, and  the scaling rate of the growth of Birkhoff sums of $\phi $ as time tends to infinity. We then establish
several important consequences which apply to infinite systems
of the kind i). This includes showing anomalous scalings in extreme event limit
laws, or entrance time statistics. We apply our findings to non-uniformly
hyperbolic systems preserving an infinite measure, establishing
anomalous scalings in the case of logarithm laws of entrance times,
dynamical Borel--Cantelli lemmas, almost sure growth rates of
extremes, and dynamical run length functions.
\end{abstract}



\author{Stefano Galatolo}
\address{Stefano Galatolo, Dipartimento di Matematica, Via Buonattoti 1, 56127 Pisa, Italy}
\email{stefano.galatolo@unipi.it}

\author{Mark Holland}
\address{Mark Holland, Mathematics (CEMPS), Harrison Building (327),
  North Park Road, EXETER, EX4 4QF, United Kingdom}
\email{m.p.holland@exeter.ac.uk}

\author{Tomas Persson}
\address{Tomas Persson, Centre for Mathematical Sciences, Lund
  University, Box 118, 221 00 Lund, Sweden}
\email{tomasp@maths.lth.se}

\author{Yiwei Zhang}
\address{Yiwei Zhang, Scool of Mathematics and Statistics, Center for
  Mathematical Sciences, Hubei Key Laboratory of Engineering Modeling
  and Scientific Computing, Huazhong University of Sciences and
  Technology, Wuhan 430074, China}
\email{yiweizhang@hust.edu.cn}

\thanks{{\it Acknowledgements.} S.~Galatolo thanks GNAMPA-INdAM for partial support during this research. M.~Holland acknowledges the support by the EPSRC:
EP/P034489/1, and the support from the mathematics departments at
the University of Pisa, and the University of Houston.
T.~Persson thanks institut Mittag-Leffler and the program ``Fractal Geometry and Dynamics'', during which parts of this
paper was written. Y.~Zhang acknowledges the support by the NSFC: 11701200
and 11871262, and the support from the mathematics departments at the University of Warwick,
the University of Exeter, the University of Pisa and the Centre Physique Th\'{e}orique, Marseille.}

\subjclass[2010]{
 37A40;  	
         37A50; 
        37A25;  
        60G70;  
        37D25. 
}

\keywords{Conservative dynamics; infinite invariant measure; Birkhoff
  sum; Logarithm law; hitting time; extreme values; Borel--Cantelli;
  intermittent system; Run length}

\maketitle

\section{Introduction} \label{sec:introduction}


\subsection{Infinite Observables}
We consider a dynamical system preserving a probability measure
$(f,X,\mu)$, together with an observable function $\phi \colon
X\rightarrow [0,\infty)$. Let us consider the case where the
observable $\phi $ is non-integrable, i.e.\ $\int \phi \,d\mu =\infty
$, and the Birkhoff sum
\begin{equation*}
S_{n}(x):=\sum_{k=0}^{n-1}\phi (f^{k}(x)).  
\end{equation*}
The pointwise ergodic theorem implies that $S_{n}(x)$ grows to
infinity faster than any linear increasing speed, for almost each
$x\in X$. For these systems, Aaronson \cite[Theorem~2.3.2]{Aaronson}
has shown that for any sequence $b(n)>0$, if $\lim_{n\rightarrow
  \infty }\frac{b(n)}{n}=\infty$ then either
\begin{equation}
  \limsup_{n\rightarrow \infty }\frac{S_{n}(x)}{b(n)}=\infty \quad
  \text{a.e.}  \qquad \text{or}\qquad \liminf_{n\rightarrow \infty
  }\frac{S_{n}(x)}{b(n)} =0\quad \text{a.e.}  \label{aar}
\end{equation}
Thus, for these kind of systems a kind of pointwise ergodic theorem
cannot hold for the asymptotic behaviour of the ratio
$\frac{S_{n}(x)}{b(n)}$ for every possible rescaling sequence $b(n)$.\footnote{A similar phenomenon occurs for $L^1$ observables in systems preserving an infinite measure, see \cite{Aaronson}, or \cite{LeMu} for a discussion and recent developments.} It is then natural to investigate the speed
of growth of such Birkhoff sums quantitatively from a coarser point of view. We approach this
problem in the first part of the paper finding  general estimates on the scaling behaviour of  such Birkhoff sums  growth. In the second part of the
paper we consider applications of these studies to understand several
quantitative ergodic features of systems preserving an infinite
measure. We set up a general framework and give examples of
application to a family of intermittent, non uniformly hyperbolic maps,
finding a kind of anomalous time-scaling for several quantitative
statistical properties of the dynamics related to extreme events and
hitting times.  The understanding of the asymptotic behaviour of
Birkhoff sums of infinite observables also has other important
applications. We mention as an example the works \cite{Cori, Cori2} where this is used to estimate the speed of mixing on
area preserving flows on surfaces.

To obtain estimates from above on the behaviour of $S_{n}$, the following
general result is useful.
\begin{prop}[{Aaronson {\protect\cite[Proposition~2.3.1]{Aar2}}}] \label{aa}
  If $a(x)$ is increasing, $\lim_{x\rightarrow \infty} \frac{a(x)}{x}
  = 0$ and
  \begin{equation*}
    \int a(\phi (x)) \, d\mu (x) <\infty,
  \end{equation*}
  then for $\mu$-a.e.\ $x\in X$
  \begin{equation*}
    \lim_{n\rightarrow \infty }\frac{a(S_{n})}{n}=0.
  \end{equation*}
\end{prop}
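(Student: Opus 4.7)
The strategy is to apply Birkhoff's pointwise ergodic theorem to the integrable function $a\circ\phi$, combined with a truncation of $\phi$ that is enabled by a subadditivity property of $a$. I begin with a preliminary reduction: replace $a$, if necessary, by a majorant $\tilde a\geq a$ that is subadditive with $\tilde a(0)=0$ and still satisfies $\tilde a(x)/x\to 0$ and $\int \tilde a(\phi)\,d\mu<\infty$. A natural way is to take $\tilde a$ to be an increasing concave majorant of $a$ (equivalently, a majorant for which $\tilde a(x)/x$ is non-increasing, which is subadditive by the standard ``starshaped $\Rightarrow$ subadditive'' argument). The care required here is ensuring that the majorant does not destroy the integrability $\int a(\phi)\,d\mu<\infty$: this is the one genuinely delicate step, and can be arranged using the sublinearity of $a$ together with the integrability hypothesis.

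Assuming now $a$ is subadditive with $a(0)=0$, fix $N>0$ and decompose $\phi=\phi_N+\phi^{N}$ with $\phi_N:=\min(\phi,N)$ and $\phi^{N}:=(\phi-N)^+$. Since $S_n(\phi)=S_n(\phi_N)+S_n(\phi^{N})$, subadditivity gives
\[
a(S_n)\;\leq\; a(S_n(\phi_N)) + a(S_n(\phi^{N})).
\]
The bounded part is easy: $S_n(\phi_N)\leq nN$, so by monotonicity of $a$ and the hypothesis $a(x)/x\to 0$,
\[
\frac{a(S_n(\phi_N))}{n} \;\leq\; \frac{a(nN)}{n} \longrightarrow 0 \qquad (n\to\infty).
\]
For the tail part, iterating subadditivity along the Birkhoff sum and using that $a$ is increasing with $a(0)=0$,
\[
a(S_n(\phi^{N}))\;\leq\;\sum_{k=0}^{n-1} a(\phi^{N}\!\circ\! f^{k})\;\leq\;\sum_{k=0}^{n-1} a(\phi\!\circ\! f^{k})\,\mathbf{1}_{\{\phi\circ f^{k}>N\}}.
\]
Since $a(\phi)\mathbf{1}_{\{\phi>N\}}\in L^{1}(\mu)$, Birkhoff's ergodic theorem delivers $\mu$-a.e.
\[
\frac{1}{n}\sum_{k=0}^{n-1} a(\phi\!\circ\! f^{k})\,\mathbf{1}_{\{\phi\circ f^{k}>N\}} \;\xrightarrow[n\to\infty]{}\; \mathbb{E}\!\left[a(\phi)\mathbf{1}_{\{\phi>N\}} \mid \mathcal{I}\right],
\]
where $\mathcal{I}$ denotes the $f$-invariant $\sigma$-algebra.

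Combining the two estimates, for every $N>0$,
\[
\limsup_{n\to\infty}\frac{a(S_n)}{n}\;\leq\;\mathbb{E}\!\left[a(\phi)\mathbf{1}_{\{\phi>N\}} \mid \mathcal{I}\right]\quad \mu\text{-a.e.}
\]
Letting $N\to\infty$, the right-hand side decreases to $0$ a.e.\ by the conditional dominated (equivalently monotone) convergence theorem: the integrand is dominated by $a(\phi)\in L^{1}(\mu)$ and $\mathbf{1}_{\{\phi>N\}}\to 0$ pointwise since $\phi<\infty$ a.e. This yields $a(S_n)/n\to 0$ $\mu$-a.e. The hardest step, and where I expect the bulk of the work, is the preliminary reduction to a subadditive $a$ without losing $\int a(\phi)\,d\mu<\infty$; everything after that is a clean truncation-plus-Birkhoff argument.
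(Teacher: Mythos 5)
Your truncation-plus-Birkhoff argument is the right core idea, and once $a$ is subadditive with $a(0)=0$ it is a complete proof: from $a(S_n)\leq a(S_n(\phi_N))+a(S_n(\phi^N))$, the first term is $\leq a(nN)=o(n)$ using only monotonicity and $a(x)/x\to 0$, while for the second, iterated subadditivity plus Birkhoff applied to $a(\phi)1_{\{\phi>N\}}\in L^1(\mu)$ give a $\mu$-a.e.\ limsup bounded by the conditional expectation of $a(\phi)1_{\{\phi>N\}}$, which vanishes a.e.\ as $N\to\infty$. (The paper itself gives no proof here; it only cites Aaronson.)

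The gap is the preliminary reduction, and you are right that it is the hard part --- in fact it cannot be done. Any increasing subadditive majorant $a^*\geq a$ with $a^*(0)=0$ obeys $a^*(x-1)\geq a^*(x)-a^*(1)$, so wherever $a$ has a large jump, $a^*$ must already be close to the post-jump value just before it; this destroys integrability whenever the mass of $\phi$ concentrates just below the jump points. Concretely, take $c_k=((k+1)!)^3$, $n_k=2^{c_k}$, set $a\equiv c_k$ on $[n_k,n_{k+1})$, and let $\phi$ take the value $n_k-1$ with probability $p_k=\delta/(c_{k-1}k^2)$ (remaining mass on $\phi=1$) under a Bernoulli shift. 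Then $a$ is increasing, $a(x)/x\to 0$, and $\int a(\phi)\,d\mu\leq 1+\delta\sum k^{-2}<\infty$; yet every such $a^*$ has $a^*(n_k-1)\geq c_k-a^*(1)$, and since $c_kp_k\sim k$ we get $\int a^*(\phi)\,d\mu\geq\sum p_k(c_k-a^*(1))=\infty$. Worse, the conclusion of the proposition itself fails for this pair: the event that some $\phi\circ f^j\geq n_k-1$ with $j<c_k$ has probability $1-(1-q_k)^{c_k}\to 1$ (here $q_k\geq p_k$ and $c_kp_k\to\infty$), on it $S_{c_k}\geq n_k$ and hence $a(S_{c_k})/c_k\geq 1$, and reverse Fatou then gives $\limsup_n a(S_n)/n\geq 1$ almost surely. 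So, read literally with only ``$a(x)/x\to 0$'', the statement is false and no majorant trick can fix your argument. The hypothesis Aaronson actually imposes --- and the one under which the paper's sole application in Remark~\ref{aarmk}, with $a(x)=x^\gamma$ and $0<\gamma<1$, operates --- is that $a(x)/x$ be non-increasing. Then $a$ is star-shaped, hence automatically subadditive with $a(0)=0$, no reduction is needed, and your argument from the truncation step onward is clean and correct.
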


\begin{rmk} \label{aarmk}
  Consider the case where $\phi(x)=d(x,\tilde{x})^{-k}$ for some
  $\tilde{x} \in X$ and $k\geq 0$, and denote by $d_{\mu }(\tilde{x})$
  the local dimension of $\mu $ at $\tilde{x}$. From
  Proposition~\ref{aa}, if we let
  $a(x)=x^{\frac{d_{\mu}(\tilde{x})}{k}-\varepsilon_1 }$ for some
  $\varepsilon_1 >0$ we get $\int a(\phi(x)) \,d\mu(x) <\infty $. This
  implies that for each $\varepsilon >0$ and $\mu$-almost all $x$, we
  have eventually (as $n\to\infty$)
  \begin{equation}
    S_{n}(x)\leq n^{\frac{k}{d_{\mu }(\tilde{x})}+\varepsilon }.  \label{eq2}
  \end{equation}
\end{rmk}

As it will be shown in Proposition~\ref{the:thereareoscillations} and
in Section~\ref{oscill}, there are systems for which the asymptotic
behaviour of $S_{n}$ is strongly oscillating, or far from the estimate
given in \eqref{eq2}.

Thus, establishing convergence (or finding the typical growth rate) of $S_n$ is
in general non-trivial, and suitable assumptions are needed on the
system to get a definite asymptotic behaviour for $S_{n}$. Lower bound
estimates on the growth rates of $S_n$ have been given in
\cite{CN17,G2} under assumptions related to hitting time statistics
and recurrence. These assumptions include having a logarithm law for
the hitting time or a dynamical Borel--Cantelli property for certain
shrinking target sets. We now review these connections in greater
detail.

\subsubsection{Known relations between Birkhoff sums of infinite observables, hitting times and Borel--Cantelli properties} 
Our first main result, Theorem~\ref{thm1} establishes almost sure
bounds on the growth rate of $S_n$ under mild assumptions on the
system $(f,X,\mu)$ and the non integrable observable function
$\phi$. We include the case where the system $(f,X,\mu)$ has
super-polynomial decay of correlations and allow the obserbable
$\phi(x)$ to be quite general. We only impose a regularity assumption
on the level sets $\{\phi(x)=u\}$. In particular our results allow for
the fact these sets might not be homeomorphic to balls (in a given
Riemannian metric), e.g.\ $\{\phi(x)\geq u\}$ might be a \emph{tube}
or another regular set.\footnote{Growth of sums, and related hitting
  statistics for these types of observables has been the focus of
  recent interest, see \cite{FFT2, G, HVRSB, V.et.al}.}

Let us now briefly discuss the hitting time scaling behaviour
indicators considered in \cite{G2} and their relation with $S_n$.  Let
$B(\tilde{x},r)$ be a ball with centre $\tilde{x}$ and radius $r$.  We
define the first hitting (or entrance) time of the orbit of $x$ to
$B(\tilde{x},r)$ by
\begin{equation*}
  \tau _{r}(x,\tilde{x}) := \min \{\,n\in
  \mathbb{N}:n>0,\ f^{n}(x)\in B(\tilde{x},r)\,\}\,.
\end{equation*}
Then define the hitting time indicators as
\begin{equation*}
  \overline{H}(x,\tilde{x}) := \limsup_{r\rightarrow 0}\frac{\log
    (\tau _{r}(x,\tilde{x}))}{-\log (r)}, \qquad
  \underline{H}(x,\tilde{x}) := \liminf_{r\rightarrow 0}\frac{\log
    (\tau _{r}(x,\tilde{x}))}{-\log (r)}.
\end{equation*}

To help in understanding the sense of these definitions, we remark that according to the definitions, $ \tau _{r}(x,\tilde{x}) $ scales like $r^{{H}(x,\tilde{x})}$.
If observables of the form $\phi (x)=d(x,\tilde{x})^{-k}$ are
considered then relations between $\overline{H},$ $\underline{H}$ and
the behaviour of Birkhoff sums of infinite observables are proved in
\cite{G2}. Among these, it is shown that for each $\varepsilon >0$,
eventually ($n\to\infty$),
\begin{equation*}
  S_{n}(x)\geq n^{\frac{k}{\overline{H}(x,\tilde{x})}-\varepsilon }  
\end{equation*}
holds $\mu$-a.e. We recall that $\overline{H}$ and $\underline{H}$
have been estimated in many systems (see e.g.\ \cite{G3, GK, GP, GP2,
  G4, GSR, kimseo} and references therein) and are related to the
local dimension of the invariant measure in strongly chaotic systems,
while in weakly chaotic or non chaotic ones they also have relations
with the arithmetical properties of the system. In particular it is
proved that in fastly mixing systems
\begin{equation*}
  \overline{H}(x,\tilde{x})=d_{\mu }(\tilde{x})
\end{equation*}
holds for $\mu$-a.e.\ $x$ (see Proposition~\ref{prop.G} for a precise
statement) hence implying for $\mu$-almost every $x$, the lower
estimate
\begin{equation*}
  S_{n}(x)\geq n^{\frac{k}{d_{\mu }(\tilde{x})}-\varepsilon }
\end{equation*}
holds for the observable $\phi (x)=d(x,\tilde{x})^{-k}$, and for large $n$
(compare with $($\ref{eq2}$)$).

In the recent paper \cite{CN17}, it is supposed that the system has
absolutely continuous invariant measure, on a space of dimension $D\in
\mathbb{N}$ and to satisfy a strong Borel--Cantelli
assumption.\footnote{The reader can find in Section~\ref{BC} precise
  definitions about the strong or weak Borel--Cantelli assumption.  We
  remark that these assumptions are strictly related to the hitting
  time behaviour, as it is shown in \cite{GK}.} Under this assumption,
it is shown that for each $\varepsilon >0$ and $\mu$-almost every $x$,
for the observables of the kind $\phi (x)=d(x,\tilde{x})^{-k}$ with
$k\geq 0$, we have eventually ($k\to\infty$)
\begin{equation*}
  n^{\frac{k}{D}-\varepsilon }\leq S_{n}(x)\leq n^{\frac{k}{D}}(\log
  n)^{\frac{ k}{D}+\varepsilon}.
\end{equation*}
Other similar results are given in the case the system is exponentially
mixing and the invariant measure has density in $L^{p}$, or in particular
cases of intermittent maps.

\subsubsection{Growth of Birkhoff sums and extremes.}\label{sec.sum-max}
Given a measure preserving system $(f,X,\mu)$ consider the
maximum process
\begin{equation}\label{eq.max-process}
  M_n(x):=\max_{0\leq k \leq n-1}\phi(f^{k}(x)),
\end{equation}
where $\phi \colon X\to\mathbb{R}$ is an observable function.  In the
case where $\phi\geq 0$ on all of $X$, it is clear that $S_n(x) \geq
M_n(x)$.  Hence $M_n(x)$ can provide a lower bound for $S_n(x)$.  In
\cite{Embrechts} it is proved that if a process $(X_{n})$ is generated
by i.i.d.\ random variables, and $\|X_1\|_1<\infty$ then
$M_{n}/S_{n}\to 0$ ($\mu$-almost surely).  Conversely, in the case of
infinite observables the behaviour of $M_{n}$ gives good lower bounds
in many interesting systems, approaching the general upper bound given
in Proposition~\ref{aa}. This is indeed the strategy used to get lower
bounds to $S_{n}(x)$ in \cite{CN17, G2} and in the present paper to
get Theorem~\ref{thm1}.

In the classical probabilistic literature the statistical properties
of such $M_n$ are of interest to those working in \emph{extreme value
  theory}, \cite{Embrechts, Galambos}.  For dynamical systems
preserving a probability measure, the distributional properties of
$M_n$ are known in some cases (e.g.\ \cite{V.et.al}, \cite{FFT1}). For
certain dynamical systems, almost sure growth rates of $M_n$ have also
been investigated \cite{G2,GNO,HNT}.  In this article, we give precise
quantification on the almost sure behaviour of $M_n$ for a general
class of infinite observables. The process $M_n$ is indeed strongly
related to the hitting time $\tau_r(x,\tilde{x})$. In the case
$\phi(x)=\psi(d(x,\tilde{x}))$, for some monotone decreasing function
$\psi \colon [0,\infty)\to\mathbb{R}$, then the event
\begin{equation} \label{ev}
  \{M_n(x)\leq u\}
\end{equation}
corresponds to the event $\{\tau_{r(u)}(x,\tilde{x})\geq n\}$ with
$r(u)=\psi^{-1}(u)$. Hence all three processes $S_n(x), M_n(x),
\tau_{r}(x,\tilde{x})$ are interlinked. This allows us to transfer
(almost sure) statistical information from any one of these processes,
to the other two.  The relation between $M_n(x)$ and
$\tau_r(x,\tilde{x})$ is explained in a very general setting and in
more detail in Section~\ref{sec.max-hit}. This construction allows us
to establish new results (Theorem~\ref{thm1} and Proposition~\ref{prop:SnR}) on the almost sure growth of $M_n$ for a more
general class observables (not related to the distance from a given point) with respect to those considered in
e.g.\ \cite{CN17, G2, HNT}  and to systems having an invariant measure which is not absolutely continuous (including the
case of measures having a non integer local dimension). In particular these results have relevance to the
case where $\phi$ is a \emph{physical observable}, see \cite{HVRSB, V.et.al}.



\subsection{Systems preserving an infinite measure}

Based on the findings on the behaviour of Birkhoff sums and maxima of
an infinite observable, we are able to address a number of relevant
topics relating to systems preserving an infinite measure. We
formulate a general framework, and show application to the the
celebrated family of ``intermittent'' maps studied by Manneville--Pomeau in
\cite{MP}, and by Liverani--Saussol--Vaienti in \cite{LSV}.
We focus on the case where $(f,X,\mu ) $ is conservative, ergodic, and
$\mu $ is $\sigma $-finite, with $\mu (X)=\infty $.  The main idea
here is to analyse a map induced over a finite part of the infinite
system.  The dynamical behaviour of the finite induced system is then
easier to study and the findings can be applied to the original
system, which can be seen as a suspension of the induced one (the
construction is outlined at the beginning of Section~\ref{extremes}).
The suspension in our case will have an associated infinite observable
which plays the role as the ``return time function.'' The results
motivated in the previous sections give important information in this
construction, such as understanding the Birkhoff sums of this
observable.\footnote{We remark that the return time function can be an
  observable with quite a complicated structure, not necessarily
  related to the distance from a point. Thus to get information on its
  Birkhoff sum, the conclusion of Theorem~\ref{thm1} is important as
  it extends to general observables.}  We find that the behaviour of
the infinite observable gives a kind of ``time rescaling'' factor
which is important in the behaviour of several quantitative ergodic
aspects of the dynamics of such infinite systems. In particular we
find this ``anomalous scaling'' in the following aspects:

\subsubsection*{The behaviour of the hitting time to small targets, and logarithm
  laws.}

Here we are interested in the time needed for a typical trajectory of
the system to hit a small target which could be seen as an extreme
event. Let $A_{n}$ be a sequence of targets of measure going to zero
and consider the hitting time to the $n$-th target
\begin{equation*}
  \tau(f,x,A_n):=\inf \{\, n\geq 0: f^{n}(x)\in A_{n} \,\}.
\end{equation*}
It is proved (see \cite{G2,G3,G,GN,GP,GP2} and references therein)
that in a wide variety of systems preserving a probability measure,
a logarithm law holds
\begin{equation}
  \lim_{n\rightarrow \infty }\frac{\log (\tau(f,x,A_n))}{-\log (\mu
    (A_{n}))}=1,
  \label{ll}
\end{equation}
provided the target sets $A_n$ are regular enough, and the system is sufficiently chaotic
(a precise statement of this kind is shown in
Proposition~\ref{pro1}). For infinite systems having a fast mixing
first return map on a finite subspace, we show that the ratio in
\eqref{ll} converges to a number $\alpha $ that depends on the return
time associated to the return map. Hence we obtain an anomalous
behaviour in a wide class of infinite systems (see
Proposition~\ref{start} for a precise statement).
We remark that a similar anomalous scaling was already found in quantitative recurrence indicators, in \cite{GKP}.

\subsubsection*{Almost sure scaling laws for the statistics of extremes in systems
  preserving an infinite measure.}

For infinite systems $(f,X,\mu)$ which are conservative, ergodic and $\mu$ is
$\sigma$-finite, we consider
the behaviour of maxima $M_{n}$ of a given observable $\phi$, see
\eqref{eq.max-process}. As discussed in the Section~\ref{sec.sum-max}
(see \eqref{ev}), this is naturally related to the hitting times.  In
Proposition~\ref{poi} we show a precise, quantitative link between
maxima and hitting time behaviour. As a consequence we obtain an
estimate for the behaviour of $M_{n}$ in infinite systems (see
Corollary~\ref{t2}).  As obtained for the hitting time problems and
logarithm laws, we show that the scaling behaviour of $M_{n}$ depends
on the return time statistics associated to the infinite measure
system (in a way we make precise in Section~\ref{sec:mainresults}), as
well as on the local regularity of the observable function $\phi
$. This is unlike the behaviour of $M_{n}$ in the probability measure
preserving case. This will be done in Section~\ref{sec.bcmaxhit}. We
apply our theory to a family of intermittent maps in
Section~\ref{sec.intermittent}.

\subsubsection*{Dynamical Borel--Cantelli laws for infinite measure
  preserving systems.}

Consider a measure preserving dynamical system $(f,X,\mu )$, and let
$(\phi_{n})$ be a sequence of observables. Furthermore let
$U_{n}(x):=\sum_{k=0}^{n-1}\phi _{k}(f^{k}(x))$, with $\mu (\phi
_{k}):=\int \phi _{k}\,d\mu $. Now suppose that $\mu (\phi
_{k})\rightarrow 0$, but $\sum_{k}\mu (\phi _{k})=\infty $.  A
\emph{dynamical Borel--Cantelli problem} is the problem to show
existence (or otherwise) of a sequence $a_{n}\rightarrow \infty$ with
$U_{n}(x)/a_{n}\rightarrow 1$, $\mu$-almost surely. In the case $\int
\phi _{k}\,d\mu =c$, we are just in a strong law of large numbers type
of situation. Hence, we aim to generalise this concept in a
non-stationary setting, i.e.\ where the observable $\phi$ changes with
time.  We address this problem for the system $(f,X,\mu)$, where $\mu$
is a $\sigma$-finite (infinite) invariant measure. In the probability
preserving case, this problem has been widely studied, and forms the
basis of \emph{dynamical Borel--Cantelli Lemma} results, see
\cite{Gouezel, GNO, HNPV, Kim}. For such systems it is shown that
$a_{n}=\sum_{k=0}^{n-1}\mu (\phi _{k})$ is the typical scaling law,
and this is consistent with the corresponding theory for
i.i.d.\ random variables, see \cite{Embrechts,Galambos}. For infinite
systems, we show that this scaling sequence is not the appropriate one
to use, and we derive the corresponding scaling law. Such a result is
new, and we apply our methods to obtain shrinking target
(Borel--Cantelli) results for the intermittent map family described in
\cite{LSV} for the $\sigma $-finite (infinite) invariant measure case.
As a further application we consider infinite systems modelled by
Young towers, see Section~\ref{sec:youngtowers}.

\subsubsection*{Dynamical run-length problems for infinite measure
  preserving systems.}

Suppose further that the measure preserving dynamical system
$(f,X,\mu)$ admits a countable or finite partition
$\{I_{j}\}_{j\in\mathcal{I}}$ on $X$ (with $\mathcal{I}\subset\mathbb{N}$ an index set),
and each $x\in X$ is coded with
the sequence $(\varepsilon_{k}(x))_{k=1}^{\infty}$, by
$\varepsilon_{k}(x)=j$ if and only if $f^{k-1}(x)\in I_{j}$. The
\emph{dynamical run length function of digit $j$} is defined by
\begin{sequation}\label{def_runlengthfunction}
  \xi^{(j)}_{n}(x) := \max \{0 \leq k \leq n : \exists 0\leq i\leq
  n-k,\varepsilon_{i+1}(x) = \cdots = \varepsilon_{i+k}(x)=j \}.
\end{sequation}
In the setting of successive experiments of coin tossing, $\xi_{n}$
corresponds to the longest length of consecutive terms of
``heads/tails'' up to $n$-times experiments \cite{Eroren70,deMo1738}.
Thus, the studies of dynamical run length functions is concerned with
quantifying the asymptotic growth behaviour of $\xi_{n}(x)$ for
$\mu$-typical $x$. Such studies admit various applications in DNA
sequencing \cite{AGW90}, finance and non-parametric statistics
\cite{Bal01,Bar92,Bat48,Mus00}, reliability theory \cite{Mus00},
Diophantine approximation theory to $\beta$-expansions of real numbers
\cite{BL14,FW12,Tonyuzhao}, and Erd\H{o}s--R\'{e}nyi strong law of large numbers
\cite{denzab07,dennic13,Eroren70,Gri93}.


We analyse the dynamical run length function in the case where
$(f,X,\mu)$ is conservative, ergodic, and $\mu$ is $\sigma $-finite.
In particular, we explicitly estimate the growth rate for $\xi_{n}(x)$
for a family of intermittent maps in the $\sigma$-finite measure case
in Section~\ref{sec:runlength}.  In contrast to probability measure
preserving systems (e.g.\ uniformly hyperbolic Gibbs--Markov systems;
logistic-like maps satisfying the Collet--Eckmann condition; and
families of intermittent maps preserving
a.c.i.p.\ \cite{CFZ18,denzab07,dennic13,Gri93,Tonyuzhao}), we show
that apart from the local dimension, there is an additional scaling
contribution, arising from the asymptotics of the return time function
associated to the induced transformation, which needs to be taken into
account in the growth rate for $\xi_{n}$ of infinite systems.  As the
reader will realize, our proof is based on a natural link between the
dynamical run length function, hitting time, and growth of maximum for
the return time functions.  We are not aware of any such links,
previously established in the literature of this subject.

\subsection{Outline of the paper.}
We structure the paper as follows. In Section~\ref{sec:mainresults} we
state the main theoretical results. This includes results on the
growth of Birkhoff sums for rapidly mixing systems, on the link
between hitting time laws and growth of extremes, and dynamical
Borel--Cantelli Lemma results for systems preserving a $\sigma$-finite
infinite measure. In Sections~\ref{sec:proofsuperpolynomial} and
\ref{sec:proofinfiniteSBC} we prove these results, and then consider
several independent topics which relate to our theory. This includes a
result on the almost sure growth rates of extremes and hitting times
for infinite systems, see Section~\ref{sec.bcmaxhit}. We then apply
our theory to an intermittent map case study in
Section~\ref{sec.intermittent}, which includes a study of dynamical
run-length problems in Section~\ref{sec.runs}. We then describe
situations in which the Birkhoff sums can wildly oscillate in
Section~\ref{oscill}.  Finally we consider Borel--Cantelli results for
general Markov extensions, such as Young towers
(Section~\ref{sec:youngtowers}).

\section{Statement of main results}

\label{sec:mainresults}

\subsection{Birkhoff sums, maxima, and hitting time statistics}

\label{sec:hittingtime} Consider a dynamical system preserving a
probability measure $(f,X,\mu )$, together with an observable function
$\phi \colon X\rightarrow [0,\infty)$ with $\int \phi \, d\mu =\infty
  $. Let us recall the notation used for Birkhoff sums and
  respectively maxima of an observable $\phi$.
\begin{equation*}
  S_{n}(x):=\sum_{k=0}^{n-1}\phi (f^{k}(x)),\qquad M_{n}(x):=\max_{0
    \leq k\leq n-1}\phi \bigl( f^{k}(x) \bigr).
\end{equation*}
In specific contexts, we sometimes emphasize the dependence on $\phi$,
and write $S_{n}^{\phi}(x)$ for $S_n(x)$ (and similarly for maxima).

As noted before (see $(\ref{aar})$) it is impossible to get precise
estimates for the asymptotic behaviour of $S_{n}$ as $n$ increases.
However, under suitable assumptions on ergodicity and on the chaotic
properties of the system, coarser estimates on asymptotic growth rates are
possible.

We show that we can achieve estimates for the scaling behaviour of
both $S_{n}$ and $M_{n}$ for systems which are super-polynomially
mixing, and for quite a large class of observables having some
regularity.  The regularity we need is a kind ``Lipschitz'' regularity
of the suplevels of the observable $\phi $. This is explained in the
next definition.  Essentially we ask that the suplevels of $\phi$ are
regular enough that they could be sublevels of a Lipschitz function.

\begin{defn}
\label{reg} Let $\phi \colon X\rightarrow [0,\infty)$ be an unbounded
function. Consider the suplevels of $\phi$, defined by
\begin{equation*}
A_{n}=\{\,x\in X:\phi (x)\geq n\,\}.
\end{equation*}
We say that $\phi$ has regular suplevels if the following holds:

\begin{enumerate}
\item[(i)] We have $\lim_{n\rightarrow \infty }\mu (A_{n})=0$, there is a
constant $c>0$ satisfying $\mu (A_{n+1})>c\mu (A_{n})$
eventually as $n$ increases, and there is $\alpha _{\phi }\in [0,\infty)$, such that
\begin{equation*}
\alpha _{\phi }=\lim_{n\rightarrow \infty }\frac{\log \mu (A_{n})}{-\log (n)}.
\end{equation*}

\item[(ii)] There is $\beta \geq 0$ and a Lipschitz function $\tilde{\phi}%
\colon X\rightarrow \mathbb{R}^{+}$ such that
\begin{equation*}
A_{n}=\{\,x\in X:\tilde{\phi}(x)\leq (\mu (A_{n}))^{\beta }\,\}.
\end{equation*}
\end{enumerate}
\end{defn}

This assumption is verified by a large class of observables, including observables related to the distance from a point.

\begin{examp}
  Suppose $X$ be a Riemannian manifold with boundary, and $d(.,.)$ the Riemannian distance.
For $\tilde{x}\in X$ consider an observable of
  the form $\phi (x)=d(x,\tilde{x})^{-k}$ with $k\geq 0$ in a
  neighbourhood of $\tilde{x}$, and suppose $d_{\mu }(\tilde{x})$
  exists and $d_{\mu }(\tilde{x})>0$. Such conditions are verified for almost each
  $\tilde{x}$ in a wide class of uniformly and non-uniformly
  hyperbolic systems, see \cite{pes}. Then we have that $A_{n}$ is a
  ball of radius $r_{n}=\frac{1}{\sqrt[k]{n}}$, and hence a regular set. In this case
  \begin{equation*}
    \alpha_\phi :=\lim_{n\rightarrow \infty }\frac{\log \mu (\{x\in
      X:\phi (x)\geq n\})}{-\log (n)}=\frac{d_{\mu }(\tilde{x})}{-k},
  \end{equation*}
  and for each $\epsilon >0$, eventually $n^{\frac{d_{\mu
      }(\tilde{x})+\epsilon }{ -k}}\leq \mu (A_{n})\leq
  n^{\frac{d_{\mu }(\tilde{x})-\epsilon }{-k}}.$ Consider $\beta $
  such that $\beta d_{\mu }(\tilde{x})>1$,
  and $\tilde{\phi}(x)$ defined as
  \begin{equation*}
    \tilde{\phi}(x)=\mu (A_{i})^{\beta }+\frac{\mu (A_{i-1})^{\beta
      }-\mu (A_{i})^{\beta }}{r_{i-1}-r_{i}}(d(\tilde{x},x)-r_{i}),
  \end{equation*}
  when $d(\tilde{x},x)\in \lbrack r_{i},r_{i-1})$. Since $\frac{\mu
    (A_{i})^{\beta }}{r_{i}}\leq i^{\frac{\beta (d_{\mu
      }(\tilde{x})-\epsilon )-1}{-k}}$ is bounded for our choice of
  $\beta $, the function $\tilde{\phi}$ is Lipschitz.
\end{examp}

Other examples include cases where the suplevels correspond to tubes
or other sets, see \cite{G3, GN} for results about hitting times on
targets which are suplevels of a Lipschitz function, applied to the
geodesic flow, in which the targets relate to ``cylinders'' in the
tangent bundle instead of balls.  Thus the regular sublevels
assumption of Definition~\ref{reg} holds for a wide class of dynamical
systems and observable geometries.  We now consider the notion of
decay of correlations.

\begin{defn} \label{decay-cor}
  Let $\mathcal{B}$ be a Banach space of functions from $X$ to
  $\mathbb{R}$. A measure preserving system $(f,X,\mu )$ is said to
  have decay of correlations in $\mathcal{B}$ with rate function
  $\Theta(n)$, if for $\varphi,\psi\in\mathcal{B}$, we have
  \begin{equation*}
    \biggl |\int \varphi \circ f^{n}\psi \,d\mu -\int \varphi \,d\mu
    \int \psi \,d\mu \biggr|\leq \lVert \varphi \rVert \lVert \psi
    \rVert \Theta (n).
  \end{equation*}
  Here $\lVert \cdot \rVert $ stands for the norm on $\mathcal{B}$.
\end{defn}

Usually decay of correlations is proved for a particular space $\mathcal{B}$,
and a specified rate $\Phi(n)$.

\begin{defn}[Condition (SPDC)] \label{sup}
  We say that $(f,X,\mu)$ satisfies condition (SPDC) (Super-Polynomial
  Decay of Correlations) if for all $\alpha >0$, we have
  $\lim_{n\to\infty} n^{\alpha}\Theta (n)=0,$ and $\mathcal{B}$ is the
  space of Lipschitz continuous functions.
\end{defn}

This condition is quite general, and many systems having some form of
piecewise hyperbolic behaviour satisfy it. See \cite{baladi} for a
survey containing a list of classes of examples having exponential or
stretched exponential decay of correlations. When this kind of decay
holds for Lipschitz observables these examples satisfy (SPDC). We
remark that if a system has a certain decay of correlations with
respect to H{\"o}lder observables, then it will have the same or faster
speed when smoother observables (such as Lipschitz ones) are considered.

Now, suppose the non-integrable observable $\phi $, has regular suplevels as
in Definition~\ref{reg}. The following theorem concerns the growth of maxima
and Birkhoff sums of $\phi $.

\begin{thm}
\label{thm1} Let $(X,f,\mu )$ be a probability measure preserving system on
a metric space $X$, satisfying condition (SPDC). Let $\phi $, $A_{n}$
and $ \alpha =\alpha _{\phi }$ be as in Definition~\ref{reg}, with
$\Vert \phi \Vert _{1}=\infty $. If $\alpha >0$, then for each
$0<\varepsilon <\alpha $ and $\mu $-a.e.\ $x\in X$, there exists $N\in
\mathbb{N}$ such that for all $n\geq N$,
\begin{equation*}
n^{\frac{1}{\alpha +\varepsilon }}\leq M_{n}(x)<S_{n}(x)\leq n^{\frac{1}{%
\alpha -\varepsilon }}.
\end{equation*}%
If $\alpha =0$, then for $\mu$-a.e.\ $x\in X$
\begin{equation*}
\lim_{n\rightarrow \infty }\frac{\log (S_{n}(x))}{\log n}=\lim_{n\rightarrow
\infty }\frac{\log (M_{n}(x))}{\log n}=\infty.
\end{equation*}
\end{thm}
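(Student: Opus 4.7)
The plan is to combine an upper bound on $S_n$ obtained from Aaronson's Proposition~\ref{aa} with a matching lower bound on $M_n$ obtained from a dynamical Borel--Cantelli argument that uses (SPDC) together with the Lipschitz suplevel structure of Definition~\ref{reg}(ii). Since $M_n \le S_n$ whenever $\phi \ge 0$, these two estimates sandwich both quantities.

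For the upper bound, first note that $\lVert\phi\rVert_1=\infty$ together with condition (i) forces $\alpha\le 1$, because by the layer-cake formula $\int\phi\,d\mu=\int_0^\infty\mu(A_t)\,dt$ converges precisely when $\alpha>1$. For $0<\varepsilon<\alpha$ the map $a(x)=x^{\alpha-\varepsilon}$ is increasing, satisfies $a(x)/x\to 0$, and $\int a(\phi)\,d\mu<\infty$ (using $\mu(A_t)\le t^{-(\alpha-\varepsilon/2)}$ eventually). Proposition~\ref{aa} then gives $a(S_n)/n\to 0$ $\mu$-a.e., i.e.\ $S_n\le n^{1/(\alpha-\varepsilon)}$ eventually.

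For the lower bound, set $u_n := n^{1/(\alpha+\varepsilon)}$ (or $u_n = n^K$ for arbitrary $K$ when $\alpha=0$), so that $n\,\mu(A_{u_n})\to\infty$ at a polynomial rate by condition (i). I would use condition (ii) to bracket $\mathbf{1}_{A_{u_n}}$ between Lipschitz functions $\psi_n^\pm$ with $\mu(\psi_n^\pm)=(1+o(1))\mu(A_{u_n})$ and $\lVert\psi_n^\pm\rVert_{\mathrm{Lip}}$ only polynomial in $n$; this is possible because the suplevels are level sets of the Lipschitz function $\tilde\phi$, so one only inflates or shrinks the Lipschitz threshold $\mu(A_{u_n})^\beta$ by a negligible amount. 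Then (SPDC) controls
\begin{equation*}
\Bigl|\int \psi_n^\pm\!\circ f^j\cdot\psi_n^\pm\,d\mu-\mu(\psi_n^\pm)^2\Bigr| \le \lVert\psi_n^\pm\rVert_{\mathrm{Lip}}^2\,\Theta(j),
\end{equation*}
and superpolynomial decay of $\Theta$ absorbs the polynomial Lipschitz factor, yielding summable second-moment estimates. A Chebyshev/Borel--Cantelli argument along a sparse subsequence $n_j$, followed by monotone interpolation between $n_j$ and $n_{j+1}$, gives $M_n(x)\ge u_n$ for $\mu$-a.e.\ $x$ and all large $n$. In the case $\alpha=0$ the same argument applied with $u_n=n^K$ for every $K\in\mathbb{N}$ yields $\log M_n/\log n\to\infty$, hence also $\log S_n/\log n\to\infty$.

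The principal obstacle is the dynamical Borel--Cantelli step. One must handle (a) the dependence of the Lipschitz constant of $\psi_n^\pm$ on the exponent $\beta$ from condition (ii) while keeping $\mu(\psi_n^\pm)\sim\mu(A_{u_n})$, and (b) the choice of subsequence $n_j$ so that the variance estimates are summable and the interpolation to intermediate $n$ only costs a subpolynomial factor, preserving the exponent $1/(\alpha+\varepsilon)$. It is precisely here that (SPDC) and the structural regularity of the suplevels enter jointly and essentially.
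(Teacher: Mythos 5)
Your upper bound coincides with the paper's: Proposition~\ref{aa} with $a(x)=x^{\alpha-\varepsilon}$, inverted. The lower bound is where the two proofs diverge. The paper does not run a fresh Borel--Cantelli estimate; it reduces to Proposition~\ref{pro1}, which is an immediate consequence of Proposition~\ref{prop.G} (the hitting-time logarithm law from~\cite{G} applied to the Lipschitz function $\tilde\phi$), the computation $d_\mu(\tilde\phi)=1/\beta$, and the discretisation Lemma~\ref{lemmino}. From $\log\tau(x,A_n)/(-\log\mu(A_n))\to 1$ and $(-\log\mu(A_n))/\log n\to\alpha$ the paper concludes $\tau(x,A_i)\leq i^{\alpha+\varepsilon}$ eventually, and then reads off $M_n(x)\geq\max\{\,i:\tau(x,A_i)\leq n\,\}\geq n^{1/(\alpha+\varepsilon)}-1$. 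You instead propose to inline the machinery behind Proposition~\ref{prop.G}: Lipschitz bracketing of $\mathbf{1}_{A_{u_n}}$, an (SPDC)-driven variance bound, Paley--Zygmund along a sparse subsequence, interpolation. That is, at bottom, how the cited hitting-time law is itself proved, so the two arguments share their mechanism; the paper buys brevity by treating~\cite{G} as a black box, while your version is self-contained and makes the role of (SPDC) and the Lipschitz suplevel structure explicit.

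Two cautions on the details of your sketch. First, $\mu(\psi_n^\pm)=(1+o(1))\mu(A_{u_n})$ overstates what Definition~\ref{reg} delivers and is more than you need: taking $\psi_n^-$ with inner threshold $r_n/2$ in place of $r_n=\mu(A_{u_n})^\beta$ gives $\mathrm{Lip}(\psi_n^-)\lesssim\mathrm{Lip}(\tilde\phi)/r_n$, which is polynomial in $n$ since $r_n$ decays polynomially, and the lower regularity $\mu(A_{m+1})>c\,\mu(A_m)$ in condition~(i) yields $\mu(\psi_n^-)\geq c'\,\mu(A_{u_n})$ for a fixed constant $c'>0$ rather than $1-o(1)$; a constant factor is all the Paley--Zygmund argument requires, since it only reduces $\mathbb{E}[\sum_{k<n}\psi_n^-(f^kx)]$ by a constant. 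Second, to make the variance bound close, split the correlation sum at a cut-off $L_n=n^\delta$: use the trivial bound $|\mathrm{Cov}(\psi_n^-,\psi_n^-\circ f^\ell)|\lesssim\mu(\psi_n^-)$ for $\ell\leq L_n$ and (SPDC) with $\Theta(\ell)\leq\ell^{-p}$, $p$ large, for $\ell>L_n$; the probability that the orbit misses $A_{u_n}$ in the first $n$ steps is then $O(n^{\delta-\varepsilon/(\alpha+\varepsilon)})$, summable only along $n_j=j^K$ ($K$ large), and the interpolation between $n_j$ and $n_{j+1}$ costs a factor tending to $1$, absorbed by starting from a slightly smaller $\varepsilon$. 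The case $\alpha=0$ runs identically with $u_n=n^K$, using that $\mu(A_{u_n})\geq n^{-\gamma}$ eventually for every $\gamma>0$, so $n\,\mu(A_{u_n})\to\infty$ polynomially.
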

This theorem therefore applies to a wide class of observable geometries.
In the case where $\phi $ is related
to the distance from a point, or for particular dynamical systems, see \cite{G2, G4} or
\cite[Theorem~2.5]{HNT}, or \cite[Section~2.3]{CN17}.
Paarticular systems that are captured by the theory include H\'enon maps
\cite{BY}, and certain Poincar\'e return maps for Lorenz attractors \cite{GP2}, to name
a few. The proof of Theorem~\ref{thm1} can be found
in Section~\ref{sec:proofsuperpolynomial}.

\begin{notation}
A statement of the form $v (n) \sim u(n)$ as $n \to \infty$ means that there
is a constant $c > 0$ such that
\begin{equation*}
c^{-1} \leq \frac{v(n)}{u(n)} \leq c
\end{equation*}
holds for all large enough $n$.
\end{notation}

\begin{rmk}
  In the above setting, if we make the stronger assumption $\log \mu
  (A_{n})\sim n^{-\alpha }$ we can get the more precise upper estimate
  that $S_{n}(x)\leq n^{\frac{1}{\alpha }}(\log n)^{\frac{1}{\alpha
    }+\varepsilon }$ holds eventually for every $\varepsilon >0$ and
  $\mu$-a.e.\ $x\in X$.
\end{rmk}

\subsubsection{Gibbs--Markov systems.}


Theorem~\ref{thm1} shows how, with some strong assumptions on the system, we
can get information on the scaling behaviour of $S_n$. We will see
(Proposition~\ref{prop:SnR}) that if we assume some even stronger
assumptions on the system, as the presence of a Gibbs--Markov structure, we
can get even more precise estimates.

Consider again a transformation $(f,X,\mu )$ and an observable $\phi \colon
X\rightarrow \lbrack 0,\infty )$ with $\int \phi \,d\mu =\infty $. We say
that $(f,X,\mu )$ is a Gibbs--Markov system \cite{Aaronson} if we have the
following set up.

\begin{enumerate}
\item[\namedlabel{G-M1}{(A1)}] $X$ is an interval and there is a
  countable Markov partition $\mathcal{P}=\{\, X_i : i\in\mathbb{N}
  \,\}$ such that $f (X_i)$ contains a union of elements of $\mathcal{P}$,
and there exists $c_0>0$ such that $|f(X_i)|>c_0$.
Let $\mathcal{P}_n = \bigvee_{i=0}^{n}
  f^{-i}\mathcal{P}$.
\item[\namedlabel{G-M2}{(A2)}] There exists $\lambda>1$, $C>0$ such that for
  all $x\in X$ we have $|(f^{n})'(x)|\geq C\lambda^n$ for $n \geq 1$.
\item[\namedlabel{G-M3}{(A3)}] Uniform bounded distortion estimates hold on
  $\omega\in\mathcal{P}_n$.  That is, there exist $0 < \tau < 1$, $C>0$,
  such that for all $x,y\in\omega$, and
  $\forall\omega\in\mathcal{P}_n$,
  \[
  \left|\log\left(\frac{|f'(x)|}{|f'(y)|}\right)\right|
  \leq C \tau^{n}.
  \]
\item[\namedlabel{G-M4}{(A4)}] The measure $\mu$ is the unique
  invariant probability measure which is absolutely continuous with
  respect to Lebesgue measure.
\end{enumerate}

For a Gibbs--Markov system $(f,X,\mu)$ satisfying
\ref{G-M1}--\ref{G-M4}, we will use the following assumptions on the
observable $\phi$.
\begin{enumerate}
\item[\namedlabel{G-M5}{(A5)}] For any $X_i \in \mathcal{P}$ the
  restriction $\phi |_{X_i}$ of $\phi$ to $X_i$ is constant.
\item[\namedlabel{G-M6}{(A6)}] The observable $\phi$ satisfies the
  following asymptotics: there exists $\beta \in (0,1)$ such that
  \[
  \mu\{\, x\in X: \phi (x) = n \,\}\sim n^{-\beta-1}.
  \]
\end{enumerate}
\begin{rmk}
Note that assumption~\ref{G-M6} implies that the observable $\phi$ is
non-integrable, $\int \phi(x)\,d\mu = \infty$.
\end{rmk}

If $(f,X,\mu )$ is a Gibbs--Markov maps satisfying
assumptions~\ref{G-M1}--\ref{G-M6}, we are able to obtain a result on
the asymptotic speed of the typical growth of Birkhoff sums of $\phi$,
which we will now state. Our result below is similar to a result by
Carney and Nicol \cite[Theorem~4.1] {CN17}, and the proofs are also
similar. Carney and Nicol assumed that the system satisfies a strong
Borel--Cantelli lemma, but we do not assume this explicitly.

\begin{prop} \label{prop:SnR}
  Let $(f,X,\mu )$ satisfy assumptions~\ref{G-M1}--\ref{G-M6} , with
  $\beta $ as in assumption~\ref{G-M6}. Then each $\varepsilon >0$,
  and $\mu$-almost all $x\in X$ there is an $n_{0}$ such that for all
  $n>n_{0}$,
  \begin{equation*}
    n^{\frac{1}{\beta }}(\log n)^{-\frac{1}{\beta }-\varepsilon }\leq
    M_{n}(x)<S_{n}(x)\leq n^{\frac{1}{\beta }}(\log n)^{\frac{1}{\beta
      } +\varepsilon }.
  \end{equation*}
\end{prop}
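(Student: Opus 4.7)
The statement splits into the trivial middle inequality $M_n(x) < S_n(x)$ (which holds once at least two of the $\phi(f^k x)$ are strictly positive, an event of full measure for large $n$ by non-integrability of $\phi$) and two tail bounds. The plan is to handle the upper bound on $S_n$ via Aaronson's Proposition~\ref{aa} and to establish the lower bound on $M_n$ by a shrinking-target Borel--Cantelli argument that exploits the Gibbs--Markov structure together with assumption~\ref{G-M5}.

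For the upper bound on $S_n$, I would apply Proposition~\ref{aa} with $a(x)=x^{\beta}/(\log x)^{1+\eta}$ for a small $\eta>0$; this function is eventually increasing and satisfies $a(x)/x\to 0$ since $\beta\in(0,1)$. Assumption~\ref{G-M6} gives
\[
\int a(\phi)\,d\mu \;=\; \sum_{n} \mu\{\phi=n\}\,\frac{n^{\beta}}{(\log n)^{1+\eta}} \;\sim\; \sum_{n}\frac{1}{n(\log n)^{1+\eta}} \;<\;\infty,
\]
so Proposition~\ref{aa} yields $a(S_n)/n\to 0$ almost surely. Rearranging gives $S_n^{\beta}\leq n(\log S_n)^{1+\eta}$ eventually, and a standard bootstrap (first deducing $\log S_n \sim \beta^{-1}\log n$ and then reinserting) converts this into $S_n\leq n^{1/\beta}(\log n)^{1/\beta+\varepsilon}$, provided $\eta$ is chosen small compared to $\varepsilon\beta$.

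For the lower bound on $M_n$, set $u_n=n^{1/\beta}(\log n)^{-1/\beta-\varepsilon}$ and $E_n=\{\phi\geq u_n\}$. Summing assumption~\ref{G-M6} gives $\mu(E_n)\sim c\,u_n^{-\beta}\sim c\,n^{-1}(\log n)^{1+\beta\varepsilon}$, and crucially, by assumption~\ref{G-M5} each $E_n$ is a union of elements of the Markov partition~$\mathcal{P}$. Passing to the geometric subsequence $n_j=2^j$, one has $n_j\mu(E_{n_{j+1}})\sim(\log n_j)^{1+\beta\varepsilon}\to\infty$. Using the exponential decay of correlations available for Gibbs--Markov systems on partition-measurable indicators, the goal is the bound
\[
\mu\Bigl\{\,x:f^{k}(x)\notin E_{n_{j+1}}\text{ for all }0\leq k<n_j\,\Bigr\} \;\leq\; C\exp\bigl(-c(\log n_j)^{1+\beta\varepsilon}\bigr),
\]
which is summable in $j$. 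Borel--Cantelli then gives $M_{n_j}(x)\geq u_{n_{j+1}}$ eventually, and since both $u_n$ and $M_n$ are monotone, interpolation across $n_j\leq n\leq n_{j+1}$ yields $M_n(x)\geq u_n$.

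The main obstacle is the ``no return'' estimate displayed above: for general non-Markov target sets such an exponential tail can fail or require delicate short-return analysis, but the partition-measurability coming from assumption~\ref{G-M5} allows one either to invoke a classical strong Borel--Cantelli lemma for Gibbs--Markov systems (in the spirit of the Kim and Gou\"ezel--Nicol results cited earlier) or to iterate the spectral-gap estimate for the transfer operator restricted to unions of cylinders; either route delivers the required exponential bound with $c>0$ independent of $j$. The remainder of the argument is bookkeeping with the subsequence $n_j$ and the choice of $\eta$ and $\varepsilon$.
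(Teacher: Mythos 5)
Your proposal matches the paper's proof strategy in both halves. The upper bound is exactly the argument in the paper: apply Aaronson's Proposition~\ref{aa} with $a(t)=t^{\beta}(\log t)^{-1-\eta}$, verify integrability from~\ref{G-M6}, and invert asymptotically. For the lower bound, the paper isolates a clean Lemma~\ref{slem.gamma}: for any monotone $\gamma_n\to\infty$,
\[
P_n:=\mu\{\,x: \phi(f^j(x))<\gamma_n \text{ for all } j<n\,\}\leq D_1\bigl(1-D_0\gamma_n^{-\beta}\bigr)^n,
\]
proved by a direct induction on cylinders using only bounded distortion (\ref{G-M3}) and the fact that $\{\phi\geq\gamma_n\}$ is a union of partition elements (\ref{G-M5}). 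Plugging in $\gamma_n=n^{1/\beta}(\log n)^{-1/\beta-\varepsilon}$ makes $P_n=O(n^{-2})$ directly, so a single application of first Borel--Cantelli over \emph{all} $n$ finishes the lower bound---no geometric subsequence or interpolation step is needed. Your route via $n_j=2^j$ and an exponential ``no-return'' estimate coming from decay of correlations is morally equivalent and would also work, but it is slightly heavier machinery: the paper's distortion-based product estimate delivers the same bound without invoking any spectral/mixing statement, and it applies pointwise in $n$, which is why the subsequence and the monotone interpolation you perform are unnecessary. You correctly identified that~\ref{G-M5} (partition-measurability of the suplevel sets) is what makes the hard step go through; that is indeed the crux, and it is precisely what Lemma~\ref{slem.gamma} exploits.
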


\subsubsection{Oscillating Birkhoff sums}\label{oscillsec}

Proposition~\ref{aa} gives us a general upper bound on the increase of
$S_{n}$.  It does not depend on quantitative properties of the
dynamical system but appears to be near to an optimal estimate in many
strongly chaotic systems, see \cite{CN17} for a discussion. However,
there are chaotic systems for which the bound we obtain from
\eqref{eq2} is far from the actual behaviour of $S_{n}$, and there are
examples in which their Birkhoff sum is \emph{strongly
  oscillating}. A family of such  examples take the form of a skew product map
$f\colon[0,1]\times S^{1}\rightarrow \lbrack 0,1]\times S^{1}$ defined by
\begin{equation}
  f(x,t)=(T(x),t+\theta \eta (x)), \label{eq.skewproduct}
\end{equation}
where $T(x)$ is a uniformly expanding interval map, $S^1$ is the
circle, $\theta\in[0,1]$ an irrational number, and $\eta (x)$ a
specified ``skewing'' function.  We state the following Theorem, whose
proof, and precise form of $f(x,t)$ is described in
Section~\ref{oscill}.

\begin{thm}
\label{the:thereareoscillations} There are measure preserving systems $ (f,X,\mu )$
of the skew product form \eqref{eq.skewproduct}
which preserve a probability measure $\mu $ and have polynomial decay of correlations.
Moreover for an infinite observable of the kind $\phi(x)=d(x,\tilde{x})^{-k}$,
for some $\tilde{x}\in X$, we have
\begin{equation*}
\liminf_{n\rightarrow \infty }\frac{\log S_{n}(x)}{\log n}
<\limsup_{n\rightarrow \infty }\frac{\log S_{n}(x)}{\log n},
\end{equation*}
for $\mu$-a.e.\ $x\in X$. Furthermore there are measure preserving systems
with polynomial decay of correlations where even the limsup, and power law
behaviour of the Birhkhoff sums does not follow the ratio $\frac{k}{d_{\mu
}(\tilde{x})}$ suggested by \eqref{eq2}, even along subsequences. In such systems
for $\mu$-a.e.\ $x\in X$
\begin{equation*}
\limsup_{n\rightarrow \infty }\frac{\log S_{n}(x)}{\log n}<\frac{k}{d_{\mu
}(\tilde{x})}.
\end{equation*}
\end{thm}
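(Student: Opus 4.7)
The plan is to realise the two claims via the skew product $f(x,t) = (T(x), t + \theta\eta(x))$ on $[0,1] \times S^{1}$ preserving $\mathrm{Leb} \times \mathrm{Leb}$ (so that $d_\mu(\tilde{x}) = 2$ at typical base points), where $T$ is a smooth uniformly expanding interval map, $\eta$ is a carefully chosen Lipschitz function, and $\theta$ is an irrational number with prescribed Diophantine behaviour. First I would cite known results giving polynomial decay of correlations for Lipschitz observables under such skew products (constructed for example through induced coupling or tower methods); the rate is genuinely polynomial, not super-polynomial, precisely because $\theta$ will be Liouville, and this is the source of the anomaly.

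Next I would reduce the asymptotics of $S_{n}$ for $\phi(x,t) = d((x,t),\tilde{x})^{-k}$ to the closest approach $r_{n}(x,t) = \min_{0 \leq j < n} d(f^{j}(x,t),\tilde{x})$, using the sandwich $M_{n}(x,t) = r_{n}(x,t)^{-k} \leq S_{n}(x,t) \leq n \, r_{n}(x,t)^{-k}$, which loses only a factor of $\log n$ in the comparison of $\log S_{n}/\log n$. Writing $f^{j}(x,t) = (T^{j}x,\, t + \theta S^{\eta}_{j}(x))$ with $S^{\eta}_{j}(x) = \sum_{i<j}\eta(T^{i}x)$, the distance splits into an expanding-factor contribution (which behaves normally, giving minimum $\sim j^{-1}$ in the $x$-direction) and a circle-factor contribution controlled by how fast $\{t + \theta S^{\eta}_{j}(x)\}$ approaches $\tilde{x}_{2}$. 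The latter inherits Diophantine obstructions from $\theta$ whenever $S^{\eta}_{j}$ is close to an arithmetic progression, which is the mechanism I would exploit.

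For the oscillation claim, I would take $\theta$ Liouville with convergent denominators $q_{\ell}$ satisfying $\log q_{\ell+1}/\log q_{\ell} \to \infty$. Along the subsequence $n = q_{\ell}$, standard three-distance-type arguments force the circle orbit to avoid a neighbourhood of $\tilde{x}_{2}$ of radius much larger than $n^{-1/2}$, so that $\log r_{n}^{-1}/\log n$ stays strictly below $1/2$; along intermediate $n$, a Borel--Cantelli argument (applied via the fast mixing in the $x$ direction together with the moderate equidistribution in the $t$ direction between convergents) yields $r_{n}^{-1} \geq n^{1/2 - \varepsilon}$. Comparing the two subsequences produces the required strict inequality between $\liminf$ and $\limsup$ of $\log S_{n}/\log n$.

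For the stronger claim $\limsup \log S_{n}/\log n < k/d_{\mu}(\tilde{x})$, the construction has to be sharpened so that the circle coordinate never returns close to $\tilde{x}_{2}$ at the rate $n^{-1/2}$ predicted by the two-dimensional local dimension; this I would achieve by imposing a stronger Liouville property on $\theta$ (arranging the gaps between successive convergents so large that $r_{n}$ stays above $n^{-1/2 + \delta_{0}}$ uniformly in $n$, for a fixed $\delta_{0} > 0$). The main obstacle is the simultaneous satisfaction of two conflicting requirements: retaining polynomial decay of correlations in a reasonable Banach space despite the extreme Liouville character of $\theta$, and upgrading the approach-rate lower bound on $r_{n}$ from subsequential to uniform in $n$. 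Both should be tractable through a careful continued-fraction analysis of $\theta$, together with transfer-operator arguments for the skew product in which the fibered action by $\theta$ is treated as a small, controlled perturbation of the base dynamics.
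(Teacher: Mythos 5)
Your overall scaffolding — skew product over a doubling-type base, observables whose Birkhoff sums are controlled via the closest-approach quantity $r_n$, and Diophantine obstructions in the circle direction — matches the spirit of the paper's construction. However there are three concrete issues that would derail the argument as written.

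First, \emph{the choice of $\theta$ Liouville is wrong.} The paper's Proposition~\ref{prop:poldecay} (from \cite{GSR}) shows that the decay exponent $p$ for the skew product satisfies $\tfrac{1}{2\gamma(\theta)} \leq p \leq \tfrac{6}{\max(2,\gamma(\theta))-2}$. If $\theta$ is Liouville then $\gamma(\theta)=\infty$ and both bounds degenerate: the decay is \emph{slower} than any polynomial. Your assertion that "the rate is genuinely polynomial, not super-polynomial, precisely because $\theta$ will be Liouville" is reversed. The paper instead picks $\theta$ of \emph{finite} Diophantine type $\gamma$, large enough that $1/\gamma < 1 - 1/\beta$ (so as to force the $\liminf$ strictly below the $\limsup$ in Proposition~\ref{ooo}), but still finite so that polynomial mixing survives.

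Second, \emph{the observable must depend only on the circle coordinate.} The paper's observable is $\tilde\psi(x,t) = d(t,0)^{-\beta}$, not $d((x,t),\tilde x)^{-\beta}$ in the product metric. If you use the full distance, the closest-approach $r_n$ is dominated by the expanding $x$-component, whose a.s.\ recurrence is governed by the "nice" super-polynomially mixing base $T$; the rotation coordinate contributes nothing to the $\liminf/\limsup$ gap, and you lose the Diophantine obstruction altogether. The paper's sublevel sets are tubes $\{(x,t): d(t,0) \leq r\}$, not balls; this is precisely why the dimension appearing in the scaling of Proposition~\ref{prop:oscillating} is that of the fiber, and why the recurrence structure of the irrational rotation survives into the full skew product through the inequality $S^{\psi}_{n/4}(t) \leq S^{\tilde\psi}_n(x,t) \leq n\,S^{\psi}_n(t)$.

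Third, \emph{the stronger bound $\limsup \log S_n/\log n < k/d_\mu(\tilde x)$ cannot be achieved by tuning $\theta$ within a one-fiber skew product.} By Kim--Seo (Lemma~\ref{KS...}), $\underline H(x,y)=1$ for Lebesgue-a.e.\ $x$, \emph{for every} irrational $\theta$; no amount of Liouville structure on a single $\theta$ can prevent the rotation orbit from coming within $n^{-1+\varepsilon}$ of $\tilde x_2$ infinitely often. The paper's mechanism for the second claim is a genuinely different construction: a skew product over $[0,1]$ into $S^1 \times S^1$ with angle pair $(\theta,\theta')$ taken from the class $Y_\xi$, engineered so the two coordinates never approach the target simultaneously. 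This produces $\underline H(x,y) \geq \max(3,\xi)$ (Proposition~\ref{exest}), and feeding that into the upper bound $\limsup \log S_n/\log n \leq k/\underline H + 1$ from \cite[Theorem~11]{G2} yields the strict inequality once $\xi \geq 4$ and $k > 12$. Without passing to the two-dimensional torus fiber, the requisite deficiency of $\underline H$ below $d_\mu = 2$ is not attainable.

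So: keep your sandwich $M_n \leq S_n \leq n\,M_n$ and the idea of exporting rotation arithmetics into the skew product, but replace "Liouville" with "finite Diophantine type sufficiently larger than $\beta/(\beta-1)$", restrict the observable to the fiber coordinate, and for the second assertion pass to a two-dimensional rotation fiber with angle pair in $Y_\xi$.
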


\subsection{Application to extreme events and hitting times in syst\-ems
having a fast mixing return map.}
\label{extremes}

The estimates on Birkhoff sums of infinite observables are useful to
investigate quantitative aspects of the dynamics of systems preserving
an infinite measure. Consider an infinite system $(f,X,\mu)$, where
$\mu$ is assumed to be infinite but $\sigma$-finite. A classical
approach to study such an infinite system is by inducing the
dynamics on a subset $Y$ of positive (finite) measure. Let $R$
be the return time function to the domain $Y$, that is for $x\in Y$,
\begin{equation*}
  R(x) = \min \{\, n > 0 : f^n (x) \in Y \, \}.
\end{equation*}
Then $f^R \colon x \mapsto f^{R(x)} (x)$ defines a dynamical system
$(f^R, Y)$ which preserves the measure $\mu_Y = \mu|_Y$. (We shall now
denote $f^R$ by $f_Y$).  The system $(f_Y,Y,\mu_Y)$ is called the
induced system. It is a finite measure preserving system, and its
dynamics gives information on the original infinite system. The
original system can be seen as a suspension on the induced system,
that is if we define $\hat{f} \colon \hat{Y} \to \hat{Y}$ by
\begin{equation*}
\hat{X} = \{\, (x,n) \in Y \times \mathbb{N} : 0 \leq n < R(x) \,\},
\end{equation*}
and
\begin{equation*}
\hat{f} (x,n) = \left\{
\begin{array}{ll}
(x, n+1) & \text{if } n+1 < R(x), \\
(f_Y (x), 0) & \text{if } n+1 = R(x),%
\end{array}
\right.
\end{equation*}
then $(\hat{f},\hat{X})$ is a suspension of $(f_Y,Y)$ and $(\hat{f}, \hat{X},
\hat{\mu})$ is isomorphic to $(f,X,\mu)$ if $\hat{\mu}$ is defined in the
natural way, e.g.\ see \cite{Zwm}.

Here a major role is played by the return time function $R$. In this
case $R$ will be a non-integrable observable on $(f_Y,Y,\mu_Y)$, and
to this situation we can apply the findings of the previous
section. We remark that the observable $\phi\equiv R$ is not necessarily related to the distance from a certain point. In particular if we are interested in hitting time or extreme
problems then the asymptotic behaviour of the Birkhoff sums $S_{n}^{R}$ of the return
time in the induced system is particularly important. We show
in the following Sections~\ref{s1} and \ref{sec.max-hit} that the behaviour of
$S_n^R$ implies anomalous scaling behaviour for the hitting
time to small targets, and for growth of extreme events.

\subsubsection{Logarithm law and the anomalous hitting time behaviour in
infinite systems.}
\label{s1}
We derive a limit (logarithm) law for the hitting time function.
First, recall some definitions relating to the hitting time to general
targets and logarithm laws in this context. Consider a dynamical
system $(f,X,\mu)$ on a metric space $X.$ Let $B_{n}\subseteq X$ be a
decreasing sequence of targets; let us consider the hitting time of the
orbit starting from $x\in X$ to the target $B_{n}$%
\begin{equation*}
\tau (f,x,B_{n})=\min \{\, n\geq 0 :f^{n}(x)\in B_{n} \,\}
\end{equation*}
(In case the map considered is obvious from the context, instead of $\tau
(f,\cdots)$ we may write $\tau (\cdots)$ for simplicity.)

The classical logarithm law results relates the hitting time scaling
behaviour to the measure of the targets. In many cases when $f$ preserves a
probability measure $\mu $ and the system is chaotic enough, or it has
generic arithmetic properties then the following holds for $\mu$-a.e.~$x\in X$
\begin{equation}
\lim_{n\rightarrow \infty }\frac{\log \tau (f,x,B_{n})}{\log n}
=\lim_{n\rightarrow \infty }\frac{\log (\mu (B_{n}))}{-\log n}.
\label{loglaw}
\end{equation}%
In words: \emph{the hitting time scales as the inverse of the measure of the
targets} (compare with \eqref{ll}).

We see that \emph{in systems preserving an infinite measure} this law
does not hold anymore, but under some chaoticity assumptions we can
replace the equality \eqref{loglaw}, with a rescaled version of
it. In fact, the rescaling factor depends on the return time behaviour of
the system on some subset $Y \supseteq B_{n}$ containing the target
sets $B_{n}.$

Suppose $(f,X,\mu )$ preserves an infinite measure
$\mu$, $Y \subseteq X$ is such that $\mu (Y) <\infty $, and
consider $B_{n}\subseteq Y$. The following holds.

\begin{prop} \label{start}
  Let $(f,X,\mu )$ be a dynamical system preserving an infinite
  measure $\mu $. Let $(f_Y, Y,\mu _{Y})$ be the induced system over a
  domain $Y $ of finite positive measure, preserving a probability
  measure $\mu _{Y}=\mu |_{Y}$, and with return time function $R\colon
  Y\rightarrow \mathbb{N}$. Suppose $R$ has regular suplevels with
  associated exponent $\alpha _{R}$ (see
  Definition~\ref{reg}). Suppose that $(f_Y, Y ,\mu _{Y})$ satisfies
  Condition (SPDC). Let $B_{n}\subseteq Y$ be a decreasing sequence of
  targets also satisfying items (i) and (ii) of
  Definition~\ref{reg}. Consider $\alpha \geq
  0$, such that $\alpha =\lim_{n\rightarrow \infty} \frac{\log \mu
    (B_{n})}{-\log n}.$ Then for $\mu$-a.e.\ $x\in X$,
  \begin{equation*}
    \lim_{n\rightarrow \infty } \frac{\log \tau (f,x,B_{n})}{\log n}
    = \frac{\alpha }{\alpha _{R}}.
  \end{equation*}
\end{prop}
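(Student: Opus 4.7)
The plan is to reduce the hitting time for $(f,X,\mu)$ to a Birkhoff sum of the return time $R$ over the induced system $(f_Y, Y, \mu_Y)$, and then combine the classical logarithm law (Proposition~\ref{pro1}) for the induced system with the growth estimate for Birkhoff sums of infinite observables given by Theorem~\ref{thm1}.

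First, since the infinite system is conservative and ergodic with $\mu(Y) > 0$, $\mu$-a.e.\ $x \in X$ enters $Y$ in finite time, and this finite first-entry time is logarithmically negligible; so I may assume $x \in Y$. Letting $\tau_Y(x, B_n) := \min\{\, j \geq 0 : f_Y^j(x) \in B_n \,\}$, the orbit of $x$ under $f$ visits $Y$ at the times $0, R(x), R(x)+R(f_Y(x)), \ldots$, which yields the key identity
\begin{equation*}
\tau(f,x,B_n) \;=\; \sum_{j=0}^{\tau_Y(x,B_n)-1} R(f_Y^j(x)) \;=\; S_{\tau_Y(x,B_n)}^{R}(x).
\end{equation*}
This identity reduces the problem to controlling (a) the growth of $\tau_Y(x, B_n)$ as $n \to \infty$, and (b) the growth of Birkhoff sums $S_m^{R}(x)$ as $m \to \infty$.

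For (a), since $(f_Y, Y, \mu_Y)$ satisfies (SPDC) and the targets $B_n$ satisfy the regularity assumptions (i) and (ii) of Definition~\ref{reg}, Proposition~\ref{pro1} gives for $\mu_Y$-a.e.\ $x$
\begin{equation*}
\lim_{n\to\infty} \frac{\log \tau_Y(x,B_n)}{\log n} = \alpha,
\end{equation*}
and in particular $m_n := \tau_Y(x, B_n) \to \infty$. For (b), I apply Theorem~\ref{thm1} to $(f_Y, Y, \mu_Y)$ with observable $\phi = R$, which by assumption has regular suplevels with exponent $\alpha_R$; this gives, for $\mu_Y$-a.e.\ $x$ and every $\varepsilon \in (0, \alpha_R)$, the bound $m^{1/(\alpha_R + \varepsilon)} \leq S_m^{R}(x) \leq m^{1/(\alpha_R - \varepsilon)}$ for all sufficiently large $m$.

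Intersecting the two full-measure sets and substituting $m = m_n$ into the Theorem~\ref{thm1} bounds, I take logarithms and divide by $\log n$ to get
\begin{equation*}
\frac{\log m_n}{(\alpha_R + \varepsilon)\log n} \;\leq\; \frac{\log \tau(f,x,B_n)}{\log n} \;\leq\; \frac{\log m_n}{(\alpha_R - \varepsilon)\log n}.
\end{equation*}
Letting $n \to \infty$ and using $\log m_n/\log n \to \alpha$, I obtain $\alpha/(\alpha_R + \varepsilon) \leq \liminf \leq \limsup \leq \alpha/(\alpha_R - \varepsilon)$; since $\varepsilon$ is arbitrary the required limit $\alpha/\alpha_R$ follows. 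The degenerate case $\alpha_R = 0$ (where Theorem~\ref{thm1} gives $\log S_m^R/\log m \to \infty$, so $\log\tau(f,x,B_n)/\log n \to \infty$, consistent with $\alpha/0 = \infty$ for $\alpha > 0$) is handled analogously using the second assertion of Theorem~\ref{thm1}. The main technical obstacle I foresee is ensuring the two almost-sure conclusions above are available on a common full-measure set in $Y$, and then transferring this set back through the sweep-out $\bigcup_{k\geq 0} f^{-k} Y$ to obtain a full-$\mu$-measure statement on $X$; this uses ergodicity and conservativity in a standard way, but must be recorded explicitly.
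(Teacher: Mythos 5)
Your proof is correct and takes essentially the same route as the paper's: reduce $\tau(f,x,B_n)$ to the Birkhoff sum $S^R_{\tau_Y(x,B_n)}(x)$ over the induced system, then combine Proposition~\ref{pro1} (giving $\log \tau_Y(x,B_n)/\log n \to \alpha$) with Theorem~\ref{thm1} applied to $\phi=R$ (giving the scaling of $S^R_m$). The extra care you take about the $\alpha_R=0$ case and transferring the full-measure set from $Y$ to $X$ via conservativity is left implicit in the paper's version but is welcome.
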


The proof of Proposition~\ref{start} is in
Section~\ref{sec:proofstart}. In the next section achieve a
corresponding statement for the maxima process $M_n$.

\subsection{On the link between almost sure growth of maxima and hitting
time laws.}\label{sec.max-hit}

Suppose $X\subset\mathbb{R}^d$, and consider a sequence of functions
$H_n \colon X\to\mathbb{R}$.  For $x\in X$ consider the following
maximum function sequence and corresponding hitting time function
sequence defined by
\[
\tilde{M}_n (x):= \max_{0 \leq k < n} H_k(x),\quad
\tau_{u}(x)=\min\{\, n \geq 0 : H_n(x)\geq u \,\}.
\]

Examples include the case where we have a probability space
$(X,\mathcal{B}_X,\mu)$, with $\mathcal{B}_X$ the $\sigma$-algebra of
subsets of $X$, $\mu$ a probability measure, and $(H_n)$ a sequence of
random variables. Another case includes that of a measure preserving
system $(f,X,\mu)$, where we set $H_n(x)=\phi(f^n(x))$, with specified
observable function $\phi \colon X\to\mathbb{R}$.  In this latter case,
$\tilde{M}_n(x)$ coincides with the usual definition of $M_n$ given
in \eqref{eq.max-process}.

In this section, we
derive a precise link between the growth rate of $\tilde{M}_n(x)$ (as
$n\to\infty$), and the growth rate of $\tau_u(x)$ (as $u\to\infty$).
We'll assume further that either $\tilde{M}_n(x)\to\infty$ as $n\to\infty$, or
$\tau_u(x)\to\infty$ as $u\to\infty$.

First we make the basic observation that the event
$\{\tilde{M}_n \leq u\}$ is the same as $\{\tau_u\geq n\}$.
We state our first elementary result.

\begin{prop}\label{prop.maxhit}
  Suppose $X\subset\mathbb{R}^d$, and consider the sequence of
  functions $H_n \colon X\to\mathbb{R}$.
  \begin{enumerate}
  \item Suppose that $\ell_1,\ell_2 \colon [0,\infty)\to[0,\infty)$
      are monotone increasing functions, such that
      $\ell_1(u)$, $\ell_2(u)\to\infty$ as $u\to\infty$.  Suppose for
      given $x\in X$, there exists $N(x)>0$, such that for all $n\geq
      N$ we have $\ell_1(n)\leq \tilde{M}_n(x)\leq\ell_2(n)$. Then
      there exists $u_0(x)$, such that for all $u\geq u_0$ we have
      \[
      \ell_2^{-1}(u-1)\leq\tau_u(x)\leq\ell^{-1}_1(u+1).
      \]
    \item Suppose that $\hat{\ell}_1,\hat{\ell}_2 \colon
      [0,\infty)\to[0,\infty)$ are monotone increasing functions, such
          that $\hat{\ell}_1(u)$, $\hat{\ell}_2(u)\to\infty$ as
          $u\to\infty$. Suppose that for given $x\in X$, there exists
          $u_0(x)>0$, such that for all $u\geq u_0$, we have
          $\hat{\ell}_1(u)\leq\tau_u(x)\leq\hat{\ell}_2(u).$ Then
          there exists $N(x)>0$, such that
      \[
      \hat{\ell}^{-1}_2(n)\leq \tilde{M}_n(x)\leq\hat{\ell}^{-1}_1(n).
      \]
  \end{enumerate}
\end{prop}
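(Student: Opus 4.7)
The plan is to exploit the elementary equivalence between the events $\{\tilde{M}_n(x)\geq u\}$ and $\{\tau_u(x)\leq n-1\}$ already highlighted just before the statement, and then convert a two-sided sandwich on one of $\tilde{M}_n$, $\tau_u$ into a two-sided sandwich on the other by applying the generalized inverse of the relevant monotone bounding function. The first step is to record the equivalence cleanly: directly from the definitions,
\begin{equation*}
\tilde{M}_n(x)\geq u \iff \exists\,0\leq k<n\text{ with }H_k(x)\geq u \iff \tau_u(x)\leq n-1,
\end{equation*}
whose negation reads $\tilde{M}_n(x)<u \iff \tau_u(x)\geq n$. Every subsequent step is obtained by passing an inequality through this equivalence.

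For Part~(1), the upper bound $\tilde{M}_n(x)\leq\ell_2(n)$ combined with $\ell_2(n)\leq u-1<u$, valid for $n\leq\ell_2^{-1}(u-1)$, gives $\tilde{M}_n(x)<u$ and hence $\tau_u(x)\geq n$; taking $n=\ell_2^{-1}(u-1)$ yields the lower bound $\tau_u(x)\geq\ell_2^{-1}(u-1)$. Symmetrically, from $\tilde{M}_n(x)\geq\ell_1(n)\geq u+1>u$, valid for $n\geq\ell_1^{-1}(u+1)$ and $n\geq N$, one obtains $\tilde{M}_n(x)\geq u$ and hence $\tau_u(x)\leq n-1$; choosing $n$ to be the smallest integer exceeding $\ell_1^{-1}(u+1)$ produces $\tau_u(x)\leq\ell_1^{-1}(u+1)$. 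The threshold $u_0(x)$ is chosen large enough that both of these choices of $n$ exceed $N(x)$.

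Part~(2) is symmetric and in fact cleaner, because $\tau_u$ is already integer-valued so no rounding slack is needed. Setting $u=\hat{\ell}_1^{-1}(n)$ gives $\tau_u(x)\geq\hat{\ell}_1(u)\geq n$, hence $\tilde{M}_n(x)<u=\hat{\ell}_1^{-1}(n)$; for the lower bound, any $u<\hat{\ell}_2^{-1}(n)$ satisfies $\tau_u(x)\leq\hat{\ell}_2(u)<n$ and hence $\tilde{M}_n(x)\geq u$, so letting $u\uparrow\hat{\ell}_2^{-1}(n)$ yields $\tilde{M}_n(x)\geq\hat{\ell}_2^{-1}(n)$. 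The threshold $N(x)$ is chosen so that the relevant values of $u$ exceed $u_0(x)$.

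There is no serious obstacle: the proposition is essentially a bookkeeping lemma. The only care needed is in adopting a consistent convention for the generalized inverses of the monotone (not necessarily strictly increasing) functions $\ell_i$, $\hat{\ell}_i$ — for instance $\ell^{-1}(v):=\inf\{\,n:\ell(n)\geq v\,\}$ — after which the $\pm 1$ shifts appearing in Part~(1) are exactly the slack required to reconcile strict versus non-strict inequality under this inverse with the restriction of $n$ to the integers. Part~(2) carries no such shifts because integrality is already built into $\tau_u$.
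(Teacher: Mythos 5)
Your proof is correct and takes essentially the same route as the paper: both arguments rest on the duality between the events $\{\tilde{M}_n < u\}$ and $\{\tau_u \geq n\}$, pass the bounding functions through it, and use monotonicity of the generalized inverses, with the $\pm 1$ slack absorbing the integer/real mismatch. Your write-up is somewhat more explicit (especially in Part~(2), where the paper is terse, and in your remark on fixing a convention for the generalized inverse), but the underlying argument is identical.
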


\begin{rmk}
In the statement of Proposition~\ref{prop.maxhit}, we do not assume
that $X$ is a measure space. In the case where $(H_n)$ is a stationary
process, defined on a suitable measure space, then
Proposition~\ref{prop.maxhit} asserts that almost sure bounds for
$\tilde{M}_n$ imply almost sure bounds for $\tau_u$, and vice
versa. We will use this fact in our dynamical systems applications.
\end{rmk}

Proposition~\ref{prop.maxhit} is proved in Section~\ref{sec.proofmaxlaw}.
We now consider specific applications of this result. For a measure
preserving dynamical system $(f,X,\mu)$ define
\[
\tau^{\phi}_{u}(x)=\inf\{\, n:\phi(f^n(x))\geq u\,\},
\]
and put $\tilde{M}_n(x)=M_n(x)$, where we recall $M_n(x)=\max_{k\leq n-1}\phi(f^{k}(x))$.
Here $\phi \colon X\to\mathbb{R}$ is an observable function.
Examples include $\phi(x)=-\log d(x,\tilde{x})$ or
$\phi(x)=d(x,\tilde{x})^{-1}$, for a given $\tilde{x}\in X$, but we have seen that
our theory allows us to consider much more general cases.

\subsubsection{The logarithm law for hitting times and maxima}
We now consider the \emph{logarithm law}, especially for the hitting
time function.  We show via Proposition~\ref{prop.maxhit} that a
logarithm law for hitting time implies a logarithm law for maxima (and
conversely).  Again this is a pointwise result. See
\cite[Proposition~11]{G4} for a similar statement.

\begin{prop}\label{poi}
  Consider a dynamical system $(f,X)$. Suppose that
  $0<a_1<a_2<\infty$, and $x\in X$. Then we have the following
  implications.
  \begin{align*}
    \limsup_{n\rightarrow \infty } \frac{\log [M_{n}(x)]}{\log n}
    &=a_1 \quad \Longleftrightarrow \quad \liminf_{u\rightarrow \infty
    } \frac{\log [\tau^{\phi}_{u}(x)]}{\log u}=\frac{1}{a_1},
    \\ \liminf_{n\rightarrow \infty } \frac{\log [M_{n}(x)]}{\log n}
    &=a_2 \quad \Longleftrightarrow \quad \limsup_{u\rightarrow \infty
    } \frac{\log [\tau^{\phi}_{u}(x)]}{\log u}=\frac{1}{a_2}.
  \end{align*}
  Moreover, if $a_1=a_2$, then
  \begin{equation*}
    \lim_{u\rightarrow \infty }\frac{\log [\tau^{\phi}_{u}(x)]}{\log
      u}=\left( \lim_{n\rightarrow \infty }\frac{\log [M_{n}(x)]}{\log
      n}\right) ^{-1}
  \end{equation*}
  provided the corresponding limits exist at $x$.
\end{prop}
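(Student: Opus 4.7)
The plan is to leverage the elementary duality
$\{M_n(x) \geq u\} = \{\tau^\phi_u(x) \leq n-1\}$, equivalently
$\{M_n(x) < u\} = \{\tau^\phi_u(x) \geq n\}$, which holds for every $n \geq 1$ and $u \in \mathbb{R}$; this is the same observation that underlies Proposition~\ref{prop.maxhit}. Under this duality, logarithmic asymptotics of $M_n$ translate into logarithmic asymptotics of $\tau^\phi_u$, with $\limsup$ and $\liminf$ swapped and the exponent inverted. The whole proof is an exercise in carrying this translation out carefully for the two pairs of extremal quantities.

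For the first equivalence, I would prove the forward implication in two halves. The ``upper half'' of $\limsup_n \log M_n(x)/\log n = a_1$ says that for every $\varepsilon>0$, eventually $M_n(x) \leq n^{a_1 + \varepsilon}$. Setting $n = n(u) := \lfloor u^{1/(a_1+2\varepsilon)}\rfloor$, the elementary estimate $n^{a_1+\varepsilon} < u$ holds for $u$ large, so $M_n(x) < u$, and the duality gives $\tau^\phi_u(x) \geq n(u)$. Hence $\liminf_{u\to\infty} \log \tau^\phi_u(x)/\log u \geq 1/(a_1+2\varepsilon)$, and letting $\varepsilon \to 0$ yields $\liminf \geq 1/a_1$. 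For the matching upper bound on $\liminf$, pick a subsequence $n_k \to \infty$ with $M_{n_k}(x) \geq n_k^{a_1 - \varepsilon}$, set $u_k := n_k^{a_1-\varepsilon}$ (which tends to $\infty$ since $a_1 > 0$), and apply the duality to get $\tau^\phi_{u_k}(x) \leq n_k$. Then $\log \tau^\phi_{u_k}(x)/\log u_k \leq 1/(a_1-\varepsilon)$, so $\liminf \leq 1/(a_1 - \varepsilon)$, and $\varepsilon \to 0$ gives $\liminf \leq 1/a_1$.

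The reverse implication is obtained by running the same two-step argument with the roles of $M_n$ and $\tau^\phi_u$ interchanged, using now that ``eventually $\tau^\phi_u(x) \geq u^{1/a_1 - \varepsilon}$'' feeds into an upper bound on $M_n$, while ``$\tau^\phi_{u_k}(x) \leq u_k^{1/a_1+\varepsilon}$ along a sequence'' feeds into a subsequential lower bound on $M_n$. The second equivalence of the proposition, pairing $\liminf_n \log M_n/\log n$ with $\limsup_u \log \tau^\phi_u/\log u$, is proven identically, with the roles of ``eventually'' and ``along a subsequence'' exchanged and with $a_2$ in place of $a_1$. Finally, when $a_1 = a_2$ the $\limsup$ and $\liminf$ of both processes coincide, so genuine limits exist and the claimed reciprocal identity is immediate.

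The main obstacle is purely bookkeeping: controlling the nested $\varepsilon$'s, ensuring each auxiliary sequence (such as $u_k = n_k^{a_1 - \varepsilon}$) diverges to infinity, and handling the off-by-one shifts between $\{\tau^\phi_u \leq n-1\}$ and $\{\tau^\phi_u \geq n\}$; the assumption $0 < a_1 < a_2 < \infty$ makes all of this harmless, and no further analytic input beyond the duality is required.
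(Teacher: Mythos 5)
Your proof is correct and follows essentially the same route as the paper's: both rest on the elementary duality between $\{M_n(x)<u\}$ and $\{\tau^\phi_u(x)\geq n\}$, translate eventual power-law bounds on $M_n$ into eventual bounds on $\tau^\phi_u$ (and subsequential bounds into subsequential bounds), and let $\varepsilon\to0$. The only cosmetic difference is that the paper routes the ``eventual bound'' halves through the abstract inversion lemma (Proposition~\ref{prop.maxhit}), whereas you re-derive those translations inline; the off-by-one adjustments and nested $\varepsilon$'s are handled the same way in both.
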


\begin{rmk} In Proposition~\ref{poi}, the logarithm function diminishes any behaviour associated to sub-polynomial corrections associated to the growth of $M^{\phi}_n$ (as $n\to\infty$), or to that of $\tau^{\phi}_u$ (as $u\to\infty$).
In certain cases, this subpolynomial growth can be further quantified
as we discuss below.
\end{rmk}
Proposition~\ref{poi} is proved in Section~\ref{sec.proofmaxlaw}.
For infinite systems, we state the following corollary concerning the almost sure behaviour
of the maxima process.
\begin{cor}
\label{t2} Let $(f, X,\mu )$ be a dynamical system preserving an infinite
measure $\mu $. Let $(f_Y,Y,\mu_Y)$ and $R$ be as in Proposition~\ref{start}. Consider $\phi \colon X\rightarrow [0,\infty)$, a function which is
not bounded and that $\phi |_{X \setminus Y}$ is bounded. Suppose that also $%
\phi$ has regular suplevels. Then
\begin{equation*}
\lim_{n\rightarrow \infty }\frac{\log [M_{n}(x)]}{\log n}=\frac{\alpha _{R}}{
\alpha _{\phi }}
\end{equation*}
holds for $\mu $-a.e.~$x\in X.$
\end{cor}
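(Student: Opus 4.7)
The plan is a two-step reduction. First, translate the desired statement about $M_n$ into a logarithm law for the hitting time $\tau^\phi_u$ via Proposition~\ref{poi}; then establish that logarithm law using Proposition~\ref{start} applied to the suplevel sets of $\phi$.

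Concretely, set $B_n := A_n = \{\,x \in X : \phi(x) \geq n\,\}$. Because $\phi |_{X \setminus Y}$ is bounded, there exists $n_0$ such that $A_n \subset Y$ for all $n \geq n_0$, so $(B_n)_{n\geq n_0}$ is a decreasing sequence of targets inside $Y$. Since $\phi$ has regular suplevels in the sense of Definition~\ref{reg}, items (i) and (ii) of that definition are inherited by $(B_n)$, and the associated exponent is
\[
\alpha := \lim_{n\to\infty}\frac{\log \mu(B_n)}{-\log n} = \alpha_\phi.
\]
Since $(f_Y, Y, \mu_Y)$ and $R$ satisfy the assumptions of Proposition~\ref{start} by hypothesis, that proposition applies to $(B_n)$ and yields, for $\mu$-a.e.\ $x \in X$,
\[
\lim_{n\to\infty}\frac{\log \tau(f, x, A_n)}{\log n} \;=\; \frac{\alpha_\phi}{\alpha_R}.
\]

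Now observe that $\tau(f, x, A_n) = \tau^\phi_n(x) = \min\{\,k : \phi(f^k(x)) \geq n\,\}$. The function $u \mapsto \tau^\phi_u(x)$ is nondecreasing; for $u \in [n, n+1)$ we have $\tau^\phi_n(x) \leq \tau^\phi_u(x) \leq \tau^\phi_{n+1}(x)$, and $\log n / \log u \to 1$. Hence the discrete convergence upgrades to
\[
\lim_{u\to\infty}\frac{\log \tau^\phi_u(x)}{\log u} \;=\; \frac{\alpha_\phi}{\alpha_R}.
\]
Finally, apply the ``moreover'' clause of Proposition~\ref{poi} with $a_1 = a_2 = \alpha_R/\alpha_\phi$ (this requires $0 < \alpha_\phi < \infty$, which is the relevant case; if $\alpha_\phi = 0$ the same reciprocal relationship degenerates to $\log M_n / \log n \to \infty$, consistent with the formal ratio $\alpha_R/\alpha_\phi$). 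We conclude that for $\mu$-a.e.\ $x \in X$,
\[
\lim_{n\to\infty}\frac{\log M_n(x)}{\log n} \;=\; \left(\frac{\alpha_\phi}{\alpha_R}\right)^{-1} \;=\; \frac{\alpha_R}{\alpha_\phi},
\]
as claimed. The only delicate points are bookkeeping: verifying that the suplevel sets of $\phi$ genuinely fit the hypotheses of Proposition~\ref{start} (which is where the boundedness of $\phi |_{X \setminus Y}$ is used, to force $A_n \subset Y$ eventually), and the discrete-to-continuous interpolation for $\tau^\phi_u$, which is handled by monotonicity. No additional dynamical input beyond Propositions~\ref{start} and~\ref{poi} is required.
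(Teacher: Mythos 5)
Your proof is correct and follows essentially the same route as the paper's: pass from $M_n$ to the hitting times of the suplevel sets $A_n=\{\phi\geq n\}$, apply Proposition~\ref{start} to those targets to get $\lim \log\tau(f,x,A_n)/\log n = \alpha_\phi/\alpha_R$, upgrade to the continuous parameter $u$ by monotonicity, and invert via Proposition~\ref{poi}. You additionally spell out two points the paper leaves implicit --- that $\phi|_{X\setminus Y}$ bounded forces $A_n\subset Y$ eventually (needed to make Proposition~\ref{start} applicable), and the discrete-to-continuous interpolation for $\tau^\phi_u$ --- which is welcome bookkeeping but not a change of method.
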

The proof of Corollary~\ref{t2} can be found in Section~\ref{sec.proofmaxlaw}.
In Corollary~\ref{t2}, we have assumed (SPDC) for $(f_Y,Y,\mu_Y)$. In
the case where $(f_Y,Y,\mu_Y)$ satisfies stronger hypotheses, such as
being Gibbs--Markov, then we can obtain stronger bounds on almost
sure behaviour of the maxima function (as $n\to\infty$), and also
the hitting time function via Proposition~\ref{prop.maxhit}. We remark further that
Corollary~\ref{t2} gives almost sure bounds on the maxima process in the case of observables having general
geometries (beyond functions of distance to a distinguished point).
Thus if we know (almost sure) bounds on the hitting time function,
then we get corresponding bounds for the maxima process via Proposition~\ref{prop.maxhit} (or \ref{poi}).
This result allows us to address a question posed in e.g.\ \cite[Section~6]{HNT}
concerning the existence of an almost sure behaviour of maxima for general
observables (that are not solely a function of distance to a distinguished point).

\subsubsection{On finding precise asymptotics on the maxima and hitting time functions}
For certain stationary processes (or dynamical systems), the rate functions $\ell_1(n),\ell_2(n)$ as appearing in Proposition~\ref{prop.maxhit} can be optimised. For i.i.d.\ processes $(H_n)$, optimal expressions for these functions are given in
e.g.\ \cite{Embrechts, Galambos}. For dynamical systems having exponential decay of
correlations, higher order corrections to the almost sure
maxima function growth (beyond that given by a standard logarithm law in Proposition~\ref{poi}) are discussed in e.g.\  \cite{GNO,HNT}.
To translate such results to almost sure behaviour of hitting times, then inversion of the functions $\ell_1(n),\ell_2(n)$ is required. This we now discuss via an explicit example.
Generalisations just depend on an analysis of the functional forms of $\ell_1(n)$
and $\ell_2(n)$.

Consider the \emph{tent map} $T_{2}(x)=1-|2x-1|$, $x\in[0,1]$, and the
observable function $\phi(x)=-\log d(x,\tilde{x})$. It is shown that
there exist explicit constants $c_1, c_2>0$ such that for
Lebesgue-a.e.\ $\tilde{x}\in[0,1]$
\[
\log n-c_1\log\log n\leq M_n(x)\leq \log n+c_2\log\log n,
\]
eventually in $n$ for Lebesgue-a.e.\ $x\in [0,1]$, see \cite{HNT}. We
deduce the following asymptotic for the hitting time function. A proof
is given in Section~\ref{sec.proofmaxlaw}.

\begin{lemma}\label{lem.tent}
  Consider the tent map $T_{2} \colon[0,1]\to[0,1]$, and observable
  $\phi(x)=-\log d(x,\tilde{x})$. Then for all $\tilde{x}\in[0,1]$,
  and $\mu$-a.e.\ $x\in[0,1]$, there exists $u_0>0$ such that for all
  $u\geq u_0$
  \[
  \log\tau^{\phi}_{u}(x)=u +O\left(\log u\right).
  \]
  Here the $O(\cdot)$ constant depends on $x\in[0,1]$, and on
  $c_1,c_2$.
\end{lemma}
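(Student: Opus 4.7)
The plan is to feed the explicit almost sure two sided bound on $M_{n}$ taken from \cite{HNT} into part (1) of Proposition~\ref{prop.maxhit}, and then invert the sandwiching functions. Define
\[
\ell_{1}(n) := \log n - c_{1}\log\log n, \qquad \ell_{2}(n) := \log n + c_{2}\log\log n.
\]
Both functions are eventually monotone increasing and diverge to $\infty$, so the hypotheses of Proposition~\ref{prop.maxhit}(1) are satisfied at every $x$ in the full Lebesgue measure set on which the HNT bound holds. The conclusion of Proposition~\ref{prop.maxhit}(1) immediately yields
\[
\ell_{2}^{-1}(u-1) \;\leq\; \tau^{\phi}_{u}(x) \;\leq\; \ell_{1}^{-1}(u+1)
\]
for all $u$ sufficiently large.

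The remaining work is purely analytic: estimate the inverses $\ell_{1}^{-1}$ and $\ell_{2}^{-1}$ to subdominant order. For $\ell_{2}^{-1}$, writing $v = \ell_{2}(n)$ gives $\log n = v - c_{2}\log\log n$. A single bootstrap ($\log n = v + O(\log v)$, whence $\log\log n = \log v + O(1)$) yields
\[
\log \ell_{2}^{-1}(v) \;=\; v \;-\; c_{2}\log v \;+\; O(1),
\]
and the symmetric argument gives $\log \ell_{1}^{-1}(v) = v + c_{1}\log v + O(1)$. Substituting $v = u\pm 1$ and taking $\log$ of the sandwich above,
\[
u - 1 - c_{2}\log(u-1) + O(1) \;\leq\; \log \tau^{\phi}_{u}(x) \;\leq\; u + 1 + c_{1}\log(u+1) + O(1),
\]
both sides being $u + O(\log u)$, which is exactly the claimed asymptotic. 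The implicit constant in $O(\cdot)$ depends on $c_{1}$, $c_{2}$, and on the threshold $N(x)$ after which the HNT bound holds for the chosen $x$.

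I do not anticipate a real obstacle: the HNT bound is the input, Proposition~\ref{prop.maxhit} is black-boxed, and the inversion is a standard one-step bootstrap. The only point that deserves a line of care is checking that $\ell_{1}$ is actually monotone for large $n$ (its derivative $1/n - c_{1}/(n\log n)$ is positive for $\log n > c_{1}$, so this is immediate) and verifying that one may apply Proposition~\ref{prop.maxhit} on the full measure set where the HNT bound is valid, with the threshold $N(x)$ depending on the point.
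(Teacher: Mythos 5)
Your proof is correct and follows essentially the same route as the paper's: feed the two-sided HNT bound on $M_n$ into Proposition~\ref{prop.maxhit}(1), then estimate the inverses of $\log n \pm c\log\log n$ to subdominant order and take logarithms. The only cosmetic difference is that you carry out the inversion by a one-step bootstrap, whereas the paper sandwiches $g^{-1}$ by testing the candidate family $\tilde g(x)=x^{-a}e^{x}$ for $a\gtrless c$; both give the same $u+O(\log u)$ conclusion.
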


\begin{rmk}
  We immediately deduce a logarithm law for entrance to balls
  $B(\tilde{x},r)$ with an error rate. In particular if we let
  $u=(-\log r)^{-1}$, then $\tau^{\phi}_{u(r)}(x,\tilde{x})=\inf\{\, n
  : d(x,\tilde{x})\leq r\,\}$. Then by Lemma~\ref{lem.tent}, we obtain
  for $\mu$-a.e.\ $x\in X$ that
  \[
  \log\tau^{\phi}_{u(r)}(x)=-\log r +O\left(\log \log r\right).
  \]
\end{rmk}

In the example above, we used a higher order asymptotic on the growth
rate for the maxima to deduce a similar asymptotic for the hitting
time function. We note that a converse result applies if we have
knowledge of such asymptotics for the almost sure growth of the
hitting time function, but no apriori bounds for the growth of the
maxima function. For either the hitting time function, or maxima
function almost sure growth rates are usually deduced via
Borel--Cantelli arguments, see \cite{Galambos, GNO, HNT} for maxima,
and \cite{GK} for hitting times. We elaborate in
Section~\ref{sec.bcmaxhit}.  Thus, Proposition~\ref{prop.maxhit}
allows us to translate limit laws between maxima and hitting times
without too much extra work, except for estimating inverses of the
corresponding rate functions. We remark that in the case of
distributional limits for maxima and hitting times (as opposed to
almost sure bounds), a relation between their limit laws is described
in \cite{FFT1, FFT2}.

\subsection{Dynamical Borel--Cantelli Lemmas for infinite measure preserving
systems}\label{BC}

For a (probability) measure preserving dynamical system $(f,X,\mu
)$, a dynamical Borel--Cantelli Lemma result asserts that for a
sequence of sets $(B_{n})$ with $\sum_{n}\mu (B_{n})=\infty $, we have
\begin{equation*}
\mu \left( \bigcap_{i=1}^{\infty }\bigcup_{n=i}^{\infty }\{\,x:f^{n}(x)\in
B_{n}\,\}\right) =1,
\end{equation*}
i.e.\ $\mu \{\,x\in X:f^{n}(x)\in B_{n},\text{infinitely often}\,\}=1$.
A quantitative version leads to having the \emph{strong Borel--Cantelli}
(SBC) property defined as follows.
Given a sequence of sets $(B_{n})$ with $\sum_{n}\mu (B_{n})=\infty $, let $%
E_{n}=\sum_{k=0}^{n-1}\mu (B_{k})$.

\begin{defn}
We say that $(B_n)$ satisfies the strong Borel--Cantelli property
(SBC) if for $\mu$-a.e.\ $x\in X$
 \begin{equation*}
    \lim_{n\to\infty}\frac{S_n(x)}{E_n}=1,
 \end{equation*}
 where $S_n (x) = \sum_{k=0}^{n-1} 1_{B_k} (f^{k}x)$, and $1_{B_k}(x)$ denotes the indicator
function on the set $B_k$.
\end{defn}

For dynamical systems preserving a probability measure $\mu$, (SBC) results are now
known to hold for various systems, see \cite{GNO, HNPV, HNT,Kim}. Here, we
derive corresponding Borel--Cantelli results for infinite systems
$(f,X,\mu)$,
with $\mu$ a $\sigma$-finite measure, and $\mu(X)=\infty$.

We consider a conservative, ergodic system $(f,X,\mu)$, and suppose there
exists $Y\subset X$ for which the induced system $(f_{Y},Y,\mu _{Y})$ is
Gibbs--Markov (see Sections~\ref{sec:hittingtime}, \ref{extremes} for
conventions), but now the return time function $R\colon Y\rightarrow Y$ is
not integrable with respect to $\mu _{Y}$. In the case of integrable return
times, \cite[Theorem~3.1]{Kim} established strong Borel--Cantelli results
for the system $(f,X,\mu )$ assuming strong Borel--Cantelli results for the
induced system $(f_{Y},Y,\mu _{Y})$. Formally, consider a function sequence $%
p_{j}$ with $\sum_{j}\mu (p_{j})=\infty $, where $\mu (p_{j})=\int
p_{j}(x)\,d\mu $. We say that the strong Borel--Cantelli property holds for
this sequence, with respect to $(f_{Y},Y,\mu _{Y})$, if (necessarily)
$\sum_{j=1}^{n}\mu(p_{Q(j,x)})\rightarrow \infty $ as $n\rightarrow \infty $ and
\begin{equation}
\frac{\sum_{j=1}^{n}p_{Q(j,x)}(f_{Y}^{j}(x))}{\sum_{j=1}^{n}\mu (p_{Q(j,x)})}
\rightarrow 1,
\end{equation}
where $Q(j,x)=\sum_{i=0}^{j}R(f_{Y}^{i}(x))$ is the total clock time
associated to the $j$'th return to the base. We now
state the corresponding dynamical Borel--Cantelli result as applicable for
infinite systems.

\begin{thm} \label{thm:infiniteSBC}
  Suppose that $(f,X,\mu)$ is ergodic and conservative, and that the
  induced system $(f_Y,Y,\mu_Y)$ satisfies
  assumptions~\ref{G-M1}--\ref{G-M6}, (where observable $\phi(x)$ is
  identified with $R(x)$) and $\beta$ given by \ref{G-M6}. Put $\alpha
  = 1/\beta$. Let $(p_n)$ be a sequence of non-negative funtions which
  satisfy $p_1\geq p_2 \geq \ldots$, and assume further that
  $\mathrm{supp}(p_n)\subset Y$. We have the following cases.

  \begin{enumerate}
  \item Suppose that there exists $\varepsilon_1 \in (0, \alpha)$ such
    that $\sum_{n\geq 1}\mu(p_{n^{\alpha+\varepsilon_1}})=\infty$. If
    every subsequence $p_{n_k}$ with $\sum\mu(p_{n_k})=\infty$ is a
    strong Borel--Cantelli sequence with respect to $f_Y(x)$, then we
    have for all $\varepsilon \in (0, \varepsilon_1]$, and eventually
    as $n\to\infty$, that
    \begin{equation}  \label{eq.quant}
      \sum_{k = 1}^{n^{\frac{1}{\alpha + \varepsilon}}} \mu
      (p_{k^{\alpha+\varepsilon}}) \leq \sum_{k=1}^{n} p_k (f^k (x))
      \leq \sum_{k=1}^{n^{\frac{1}{\alpha-\varepsilon}} }
      \mu(p_{k^{\alpha-\varepsilon}}),
    \end{equation}
    for $\mu$-a.e.\ $x\in X$.

  \item Suppose there exists $\varepsilon_2>0$ such that $\sum_{n\geq
    1}\mu(p_{n^{\alpha-\varepsilon_2}})<\infty$, then
    \begin{equation*}  
      \lim_{n\to\infty}\sum_{k=1}^{n} p_k (f^k(x))<\infty,\qquad
      \text{for } \mu\text{-a.e. } x\in X.
    \end{equation*}
  \end{enumerate}
\end{thm}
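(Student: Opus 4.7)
The plan is to reduce the clock-time sum on $(f,X,\mu)$ to a sum over successive returns to $Y$, use Proposition~\ref{prop:SnR} to pin down how fast the return times grow, and then invoke the Gibbs--Markov strong Borel--Cantelli hypothesis on $(f_Y,Y,\mu_Y)$ to convert deterministic dynamical sums on $f_Y$ into $\mu_Y$-averages. Since by conservativity the orbit of $\mu$-a.e.\ $x$ enters $Y$ after finitely many steps, one may assume $x\in Y$. Writing $Q(j,x)=S_{j}^{R}(x)$ for the $j$-th return time and using $\operatorname{supp} p_{k}\subseteq Y$ to discard every $k$ not of this form gives the identity
\[
\sum_{k=1}^{n}p_{k}(f^{k}(x))=\sum_{\,j\,:\,Q(j,x)\leq n\,}p_{Q(j,x)}\bigl(f_{Y}^{j}(x)\bigr).
\]

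Next I would apply Proposition~\ref{prop:SnR} to the Gibbs--Markov system $(f_Y,Y,\mu_Y)$ with observable $R$, whose tail exponent is $\beta=1/\alpha$: for any $\delta>0$ and $\mu_{Y}$-a.e.\ $x$, eventually $j^{\alpha-\delta}\leq Q(j,x)\leq j^{\alpha+\delta}$ (the $(\log j)^{\pm(1/\beta+\delta)}$ factors from Proposition~\ref{prop:SnR} are absorbed into an adjusted $\delta$). Consequently the set $\{\,j:Q(j,x)\leq n\,\}$ contains $[1,n^{1/(\alpha+\delta)}]$ and is contained in $[1,n^{1/(\alpha-\delta)}]$. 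Combining this with the monotonicity $p_{1}\geq p_{2}\geq\cdots$ gives the pointwise sandwich
\[
p_{j^{\alpha+\delta}}\bigl(f_{Y}^{j}(x)\bigr)\leq p_{Q(j,x)}\bigl(f_{Y}^{j}(x)\bigr)\leq p_{j^{\alpha-\delta}}\bigl(f_{Y}^{j}(x)\bigr).
\]

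For part (1), fix $\varepsilon\in(0,\varepsilon_{1}]$ and take $\delta=\varepsilon$. The hypothesis $\sum_{n}\mu(p_{n^{\alpha+\varepsilon_{1}}})=\infty$ together with monotonicity of $p_n$ in $n$ forces $\sum_{n}\mu(p_{n^{\alpha+\varepsilon}})=\infty$, so the Gibbs--Markov SBC assumption applied to the deterministic sequence $(p_{j^{\alpha+\varepsilon}})_{j\geq 1}$ on $(f_Y,Y,\mu_Y)$ gives $\sum_{j=1}^{m} p_{j^{\alpha+\varepsilon}}(f_{Y}^{j}(x))\sim \sum_{j=1}^{m}\mu(p_{j^{\alpha+\varepsilon}})$ as $m\to\infty$; specialising $m=\lfloor n^{1/(\alpha+\varepsilon)}\rfloor$ produces the lower bound in \eqref{eq.quant}. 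The upper bound is treated the same way when $\sum_{n}\mu(p_{n^{\alpha-\varepsilon}})=\infty$; if instead this sum converges, the convergence direction of Borel--Cantelli (Fubini on the non-negative series $\sum_{j}p_{j^{\alpha-\varepsilon}}\circ f_{Y}^{j}$) makes the full clock-time sum $\mu$-a.s.\ bounded by a constant, which for $n$ large is dominated by the claimed right-hand partial sum. Part (2) follows from the same global sandwich: $\sum_{k=1}^{\infty}p_{k}(f^{k}(x))\leq \sum_{j=1}^{\infty}p_{j^{\alpha-\varepsilon_{2}}}(f_{Y}^{j}(x))$, whose $\mu_Y$-expectation is finite by hypothesis, so Fubini gives $\mu$-a.s.\ finiteness.

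The principal obstacle I anticipate is book-keeping: making the polylogarithmic correction from Proposition~\ref{prop:SnR} hide inside an arbitrarily small polynomial loss on both sides of the SBC application, and verifying that SBC for $(p_{j^{\alpha\pm\varepsilon}})$ can be applied with moving upper limit $j\leq n^{1/(\alpha\pm\varepsilon)}$ (rather than as $j\to\infty$ along a fixed sequence) without losing the partial-sum comparison on the right in \eqref{eq.quant}.
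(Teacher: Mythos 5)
Your plan is structurally the same as the paper's: pass to $Y$ by conservativity, identify $\sum_{k\leq n}p_k(f^k x)$ with $\sum_{j:Q(j,x)\leq n}p_{Q(j,x)}(f_Y^j x)$ using $\operatorname{supp}p_k\subset Y$, use Proposition~\ref{prop:SnR} to pin $Q(j,x)$ between $j^{\alpha\mp\varepsilon}$, and feed the resulting sums through the Gibbs--Markov strong Borel--Cantelli hypothesis. The paper tracks the integer $\ell$ with $Q(n,x)\leq\ell<Q(n+1,x)$ and for the upper bound applies SBC to the \emph{random} subsequence $(p_{Q(j,x)})_j$; your pointwise sandwich $p_{j^{\alpha+\delta}}\leq p_{Q(j,x)}\leq p_{j^{\alpha-\delta}}$ (valid eventually by monotonicity of $p_n$ together with $j^{\alpha-\delta}\leq Q(j,x)\leq j^{\alpha+\delta}$) lets you invoke SBC only for the deterministic subsequences $(p_{j^{\alpha\pm\varepsilon}})_j$, which is marginally cleaner and avoids having to interpret the SBC hypothesis on an $x$-dependent index sequence. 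That is a cosmetic, not conceptual, divergence. The two worries you raise at the end (absorbing the $(\log j)^{\pm}$ corrections into an arbitrarily small polynomial loss, and letting the upper limit $m=\lfloor n^{1/(\alpha\pm\varepsilon)}\rfloor\to\infty$ inside the SBC limit) are both unproblematic for exactly the reasons you suspect.

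One genuine slip: in part~(1) you add a case ``if $\sum_n\mu(p_{n^{\alpha-\varepsilon}})$ converges'' and argue that the a.s.\ finite constant bounding the clock-time sum is ``dominated by the claimed right-hand partial sum.'' That inference is not valid --- a finite left-hand side and a convergent right-hand side carry no order relation --- but the case is also vacuous: under the hypothesis $\sum_n\mu(p_{n^{\alpha+\varepsilon_1}})=\infty$ together with monotonicity of $p_n$ and $\varepsilon\leq\varepsilon_1$ one has $p_{n^{\alpha-\varepsilon}}\geq p_{n^{\alpha+\varepsilon_1}}$, hence $\sum_n\mu(p_{n^{\alpha-\varepsilon}})=\infty$ automatically. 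So you should simply delete that branch; the paper never needs it either.
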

(In the statement of the theorem, $p_{g(n)}$ should be interpreted as
$p_{[ g(n) ]}$, where $[ \cdot ]$ denotes the integer part.)

We make several remarks and discuss immediate consequences of
Theorem~\ref{thm:infiniteSBC}. Firstly, with slightly more effort, it
is possible in item (1) to replace the correction by $\pm \varepsilon$
in the exponents with corrections by logarithms. In some cases, if $Y$
is a Darling--Kac set, then we can get an even more precise upper
bound, see Aaronson--Denker \cite{AaronsonDenker} and the proof of
Theorem~\ref{thm:infiniteSBC} for more details.

A condition of Theorem~\ref{thm:infiniteSBC} is that we must assume
that
\[
\sum_{n\geq 1}\mu(p_{n^{\alpha+\varepsilon_1}})=\infty
\]
for some $\varepsilon_1>0$. This puts a restriction on the sequence of
functions $p_n$. Indeed it is not difficult to construct sequences
with $\sum_{n\geq 1}\mu(p_{n})=\infty$, but $\sum_{n\geq
  1}\mu(p_{n^{\alpha+\varepsilon}})<\infty$. If for example
$\mu(p_n)=n^{-\zeta}$, ($\zeta>0$), then we require $\zeta<\beta(1+
\varepsilon\beta)^{-1}$. In the case $\beta \to 1$, we find that any
$\zeta<1$ will do. In the case of item (2), if
\[
\sum_{n\geq 1} \mu (p_{n^{\alpha-\varepsilon_2}}) < \infty,
\]
(for some $\varepsilon_2>0$), then via an argument using the first
Borel--Cantelli Lemma we show that for $\mu$-a.e.\ $x\in X$ we have
\begin{equation*}
  \sum_{k=1}^{\infty} p_k (f^k(x))<\infty.
\end{equation*}
We remark that the bounds given in equation \eqref{eq.quant}
appear mysterious at first glance. In the case where $\mu(p_n)$ is
described by a functional sequence $g(n) = \mu (p_n)$, with $g
\colon (0,\infty) \to (0,\infty)$ a monotone decreasing real valued
function, then a simple change of variable argument implies that the
bounds in equation \eqref{eq.quant} can be written as
\begin{equation*}
\sum_{k=1}^{n^{\frac{1}{b}} }
\mu(p_{k^{b}})\sim\sum_{k=1}^{n} k^{1 - \frac{1}{b}} g(k),
\end{equation*}
with $b=\alpha\pm\varepsilon$ accordingly. Furthermore, in the case $\mu$ is
a probability measure then, (as established in \cite{Kim}) the usual strong
Borel--Cantelli property holds:
\begin{equation*}
  \lim_{n\to\infty}\frac{\sum_{k=1}^{n} p_k (f^k
    (x))}{\sum_{k=1}^{n}\mu(p_{k}) } = 1.
\end{equation*}
In this case, the $\varepsilon$ in Theorem~\ref{thm:infiniteSBC} is
not needed. The boundary case arises when $\beta=1$ (and $\int R
\,d\mu=\infty$). Here we can take $\alpha=1$, but the $\varepsilon$ is
still required in equation \eqref{eq.quant}. As in \cite{Kim}, the
Gibbs--Markov assumption is then not required in the case $\mu$ is a
probability measure. We require the Gibbs--Markov assumption to get
quantitative (lower) bounds on the maximum growth of the return time
function, see Lemma~\ref{slem.gamma}.

In the case where $f_Y$ is a Gibbs--Markov map, or non-uniformly expanding
map with fast decay of correlations, the sequence of functions $(p_j)$ that lead to
the strong Borel--Cantelli property include indicator functions of balls.
More general classes of functions may also lead to the strong
Borel--Cantelli property, see e.g.\ \cite{Gouezel,GNO, Kim}.

A further question that arises is what can be said about dynamical
Borel--Cantelli results for functions $(p_{n})$ no longer supported on $Y$?
In general, Theorem~\ref{thm:infiniteSBC} gives no immediate answer. The
fact that the return time function $R$ is a \emph{first return time} allows us
to track the frequency of visits of typical orbits to the regions where $%
p_{j}>0$. If these functions are not supported on $Y$, then an orbit can
have multiple visits to the regions where $p_{j}>0$ before returning to $Y$,
and this visit frequency cannot in general be controlled. However, as we
will see for a family of intermittent maps described in Section~\ref{sec.intermittent},
it is sometimes possible to explicitly track orbits once they leave $Y$, and
hence establish dynamical Borel--Cantelli results for functions $(p_{n})$
that are no longer assumed to be supported on $Y$.

As a further application, we also establish dynamical Borel--Cantelli
results for infinite systems that are modelled by Young towers,
\cite{Young}. This is discussed in Section~\ref{sec:youngtowers}. This
builds upon the work of \cite{GNO}, where they establish dynamical
Borel--Cantelli results for Young towers in the case of the system
preserving a probability measure.

\subsection{Refined limit laws for maxima and quantitative results on hitting time laws for the Markov case}
\label{sec.bcmaxhit}

We have seen in Section~\ref{sec.max-hit}, via Proposition~\ref{poi}, that
a logarithm law for maxima $M_n(x)$ can be achieved if a logarithm law for
the hitting time function $\tau^{\phi}_u(x)$ is known, especially
for systems $(f,X,\mu)$ built over an induced system $(f_Y,Y,\mu_Y)$
satisfying Condition (SPDC). In this section we establish refined
growth rates of maxima $M_{n}$ for infinite systems via knowledge of a
strong Borel--Cantelli result for the induced system $(f_Y,Y,\mu_Y)$.
Note, via Proposition~\ref{poi} we achieve almost sure
bounds for $\tau^{\phi}_u(x)$.
To keep the exposition simple we assume a Gibbs--Markov property for
$(f_Y,Y,\mu_Y)$. We point out various generalisations below.

We suppose that $(f, X,\mu)$ is an ergodic, conservative, and preserving a
$\sigma$-finite (infinite) measure $\mu$.  Let $\psi$ be a
monotonically decreasing measureable function, and let
$\phi(x)=\psi(d(x,\tilde{x}))$. For the induced system $(f_Y,Y,\mu_Y)$ we
consider the case where $\tilde{x}\in Y$, so that for sufficiently
large $u$, the set $\{\, x :\phi(x)\geq u \,\}$ is supported in
$Y$. In specific applications, we show that this constraint can be
sometimes relaxed. We state the following result. As before,
$\alpha=1/\beta$.

\begin{thm} \label{thm:infinitemaxgibbs}
  Suppose that $(f, X,\mu)$ is conservative and ergodic, and that the
  induced system $(f_Y,Y,\mu_Y)$ with return-time $R \colon Y\to \mathbb{N}$
  satisfies assumptions~\ref{G-M1}--\ref{G-M6} (with $R$ playing the
  role as the observable in \ref{G-M6}). For a monotonically
  decreasing function $\psi \colon [0,\infty)\to[0,\infty)$, assume
      that $\phi(x)=\psi(d(x,\tilde{x}))$ for some point $\tilde{x}\in
      Y$, and assume the density of $\mu$ exists and is positive at
      $\tilde{x}$.  Then for all $\varepsilon>0$, and $\mu$-a.e.~$x\in X$, there exists an $N_x$ such that for all $n\geq
      N_x$
  \begin{equation}\label{eq.maxquant}
    \psi \biggl( \frac{(\log n)^{c_1}}{n^{1/\alpha}}
    \biggr) \leq M_n(x) \leq \psi \biggl( \frac{1}{n^{1/\alpha} (\log
      n)^{c_2}} \biggr),
  \end{equation}
for some constants $c_1,c_2>0$.
\end{thm}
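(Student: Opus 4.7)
The plan is to reduce the question on the infinite system $(f,X,\mu)$ to two almost sure inputs for the induced Gibbs--Markov system $(f_Y,Y,\mu_Y)$: the growth of the Birkhoff sum $S_m^R$ of the return time, which converts a number of returns $m$ into a clock time $n$; and a quantitative two--sided closest--approach estimate for the $f_Y$--orbit to $\tilde x$. Since $\psi$ is monotone decreasing and positive, for every sufficiently large $u$ the suplevel $\{\phi\geq u\}=B(\tilde x,\psi^{-1}(u))$ is contained in $Y$, so large values of $\phi$ along an $f$--orbit are attained only at return times. By ergodicity and conservativity, $\mu$--a.e.\ $x\in X$ enters $Y$ after finitely many iterations, so we may assume $x\in Y$. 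Writing $Q(j,x)=\sum_{i=0}^{j-1}R(f_Y^i(x))$ and $m^*(n,x):=\max\{\,j\geq 0:Q(j,x)\leq n\,\}$, one has for all sufficiently large $n$
\[
M_n(x)=\psi\bigl(r^*_{m^*(n,x)}(x)\bigr),\qquad r^*_m(x):=\min_{0\leq j\leq m}d(f_Y^j(x),\tilde x),
\]
and the task reduces to sandwiching $r^*_{m^*(n,x)}(x)$ between explicit functions of $n$.

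Proposition~\ref{prop:SnR} applied to the induced system with $R$ in the role of the non--integrable observable (legitimate under \ref{G-M1}--\ref{G-M6}) gives, for every $\varepsilon>0$ and $\mu_Y$--a.e.\ $x\in Y$,
\[
m^{1/\beta}(\log m)^{-1/\beta-\varepsilon}\leq S_m^R(x)\leq m^{1/\beta}(\log m)^{1/\beta+\varepsilon}
\]
eventually. Inverting $Q(m^*,x)\leq n<Q(m^*+1,x)$ and using $\alpha=1/\beta$, one obtains constants $c,c'>0$ such that
\[
n^{1/\alpha}(\log n)^{-c}\leq m^*(n,x)\leq n^{1/\alpha}(\log n)^{c'}
\]
for all $n$ sufficiently large.

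For the second ingredient, the Gibbs--Markov hypotheses \ref{G-M1}--\ref{G-M4} (exponential decay of correlations on Lipschitz observables and uniform bounded distortion), together with the positivity of the density of $\mu_Y$ at $\tilde x$ (so that $\mu_Y(B(\tilde x,r))\sim r$), permit a quantitative strong Borel--Cantelli argument on shrinking balls with log--corrected radii $r_k=(\log k)^{\pm\gamma}/k$ (compare \cite{Kim,GNO,HNT}) to produce constants $a,b>0$ such that
\[
r^{-1}(\log(1/r))^{-a}\leq \tau^{f_Y}_r(x,\tilde x)\leq r^{-1}(\log(1/r))^{b}
\]
eventually as $r\to 0$, for $\mu_Y$--a.e.\ $x\in Y$. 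Translating from hitting time to closest approach by Proposition~\ref{prop.maxhit} (apply it to $\tilde M_n=1/r^*_n$ via the monotone equivalence $r^*_m\leq r\Leftrightarrow\tau^{f_Y}_r\leq m$), this yields
\[
\frac{1}{m(\log m)^{a}}\leq r^*_m(x)\leq \frac{(\log m)^{b}}{m}
\]
eventually in $m$.

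Substituting the bounds on $m^*(n,x)$ from the second paragraph into those on $r^*_m(x)$ from the third and using the monotonicity of $\psi$ yields \eqref{eq.maxquant} with $c_1=b+c$ and $c_2=a+c'$. The main obstacle I anticipate is the third paragraph: the refined logarithmic upper bound $\tau^{f_Y}_r\leq r^{-1}(\log(1/r))^{b}$ requires a genuine SBC result for a shrinking family of targets, since the matching lower bound needs only a summable--series first--Borel--Cantelli argument, whereas the upper bound relies on the divergent--series SBC and thus on the full quantitative strength of the Gibbs--Markov decay of correlations. The remainder is bookkeeping: collecting the four intermediate logarithmic exponents into a single pair $(c_1,c_2)$.
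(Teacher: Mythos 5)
Your proposal is correct and follows essentially the same strategy as the paper's proof: use Proposition~\ref{prop:SnR} to relate the clock time $n$ to the number of returns $m^*(n,x)$ (equivalently $k(x)$ in the paper's notation) with logarithmic corrections, and then combine this with a quantitative SBC-type estimate for the closest approach of the $f_Y$-orbit to $\tilde x$. The only cosmetic difference is that you route the second ingredient through hitting times and Proposition~\ref{prop.maxhit}, whereas the paper cites \cite[Proposition~3.4]{HNT} directly for the maxima of the induced system; these are equivalent statements via the $\{M_n\leq u\}=\{\tau_u\geq n\}$ duality, so the two arguments coincide in substance.
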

Using Proposition~\ref{prop.maxhit} we can then obtain bounds
on the almost sure behaviour of $\tau^{\phi}_u(x)$ (as $u\to\infty$).
For certain forms of $\psi$ these bounds
can be made explicit. We state the following,
whose proof is similar to that of Lemma~\ref{lem.tent}.
\begin{cor} \label{cor.hit-int}
  Let $(f,X,\mu)$, and $\tilde{x}$ be as in
  Theorem~\ref{thm:infinitemaxgibbs}, and let $\phi(x)=-\log
  d(x,\tilde{x})$.  Then for $\mu$-a.e.\ $x\in[0,1]$, there exists
  $u_0>0$ such that for all $u\geq u_0$,
  \[
  \log\tau^{\phi}_{u}(x)=\alpha u +O\left(\log u\right).
  \]
  Here the $O(\cdot)$ constant depends on $x\in[0,1]$, and on
  $c_1,c_2$ as appearing in Theorem~\ref{thm:infinitemaxgibbs}.
\end{cor}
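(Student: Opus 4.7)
The plan is to feed the maxima bounds of Theorem~\ref{thm:infinitemaxgibbs} into the abstract maxima$\leftrightarrow$hitting time conversion of Proposition~\ref{prop.maxhit}, and then invert the resulting logarithmic rate functions. This is exactly the strategy used for the tent map in Lemma~\ref{lem.tent}, only now the scaling exponent picks up the factor $\alpha = 1/\beta$ coming from the return-time tail in \ref{G-M6}.

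First, with $\phi(x)=-\log d(x,\tilde{x})$ we have $\psi(r)=-\log r$, so \eqref{eq.maxquant} reads, for $\mu$-a.e.\ $x$ and all sufficiently large $n$,
\begin{equation*}
\tfrac{1}{\alpha}\log n - c_1 \log\log n \;\leq\; M_n(x) \;\leq\; \tfrac{1}{\alpha}\log n + c_2\log\log n.
\end{equation*}
Thus, in the notation of Proposition~\ref{prop.maxhit}(1), one may take
\begin{equation*}
\ell_1(n) = \tfrac{1}{\alpha}\log n - c_1 \log\log n, \qquad \ell_2(n) = \tfrac{1}{\alpha}\log n + c_2 \log\log n,
\end{equation*}
both of which are eventually monotone increasing and tend to $\infty$. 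Applying Proposition~\ref{prop.maxhit}(1) to the sequence $H_n(x)=\phi(f^n(x))$ (so that $\tilde M_n=M_n$ and $\tau_u=\tau^{\phi}_u$), we conclude that for $\mu$-a.e.\ $x$ and all sufficiently large $u$,
\begin{equation*}
\ell_2^{-1}(u-1) \;\leq\; \tau^{\phi}_u(x) \;\leq\; \ell_1^{-1}(u+1).
\end{equation*}

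Next I would invert $\ell_1$ and $\ell_2$ asymptotically. Solving $\ell_2(n)=u$ gives $\log n = \alpha u - \alpha c_2\log\log n$; substituting the leading order $\log n \sim \alpha u$ back into the correction term yields $\log\log n = \log u + O(1)$, whence $\log \ell_2^{-1}(u) = \alpha u + O(\log u)$. The same bootstrap applied to $\ell_1$ gives $\log \ell_1^{-1}(u) = \alpha u + O(\log u)$, with the implicit constant depending on $c_1,c_2$. Combining these two inversions with the previous display (and absorbing the harmless shifts $u\pm 1$ into the $O(\log u)$ term) gives
\begin{equation*}
\log \tau^{\phi}_u(x) = \alpha u + O(\log u),
\end{equation*}
as claimed.

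There is no real obstacle here: the input bounds are already guaranteed by Theorem~\ref{thm:infinitemaxgibbs}, Proposition~\ref{prop.maxhit} does the pointwise conversion for free, and the only care needed is the standard logarithmic bootstrap that converts $\frac{1}{\alpha}\log n \pm c\,\log\log n = u$ into $\log n = \alpha u + O(\log u)$. The mildest technical point is verifying that $\ell_1,\ell_2$ are monotone on the eventual range of $n$, which is immediate because $\log\log n$ grows far slower than $\log n$, so both functions are increasing for $n$ large enough and the hypotheses of Proposition~\ref{prop.maxhit}(1) are satisfied.
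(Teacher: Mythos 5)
Your proof is correct and takes essentially the same approach the paper intends (the paper only remarks that the proof of Corollary~\ref{cor.hit-int} is ``similar to that of Lemma~\ref{lem.tent}''): feed the maxima bounds of Theorem~\ref{thm:infinitemaxgibbs} into Proposition~\ref{prop.maxhit}, then asymptotically invert the rate functions. The only cosmetic difference is that Lemma~\ref{lem.tent} inverts $\log x + c\log\log x$ by checking an explicit candidate inverse $x^{-a}e^{x}$, whereas you use a logarithmic bootstrap; both yield $\log\ell_i^{-1}(u) = \alpha u + O(\log u)$ and are interchangeable.
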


We make the following remarks. The proof of
Theorem~\ref{thm:infinitemaxgibbs} uses explicit almost sure bounds
achieved on the maxima process for the Gibbs--Markov map
$f_Y$, see e.g.\ \cite[Proposition~3.4]{HNT} which utilises a
\emph{quantitative strong Borel--Cantelli} (QSBC) property for the
system $(f_Y,Y,\mu_Y)$. Informally this property is described as follows: if
$p_k$ is a decreasing sequence of functions, with
$E_n:=\sum_{k=1}^{n}\mu_Y(p_k)\to\infty$, then a QSBC property takes
the form
\begin{equation*} 
  \sum_{k=1}^{n}p_k(f_{Y}^k(x))=E_n+O\left(E_{n}^{\beta'}\right),\qquad
  \text{for } \mu_Y\text{-a.e.}~x\in Y,
\end{equation*}
and for some $\beta'\in (0,1)$. To get the logarithmic correction terms in
equation \eqref{eq.maxquant}, the QSBC property is used. If instead a
\emph{standard} SBC property is used, i.e.\ ignoring the
$E_{n}^{\beta'}$ correction, then we obtain slightly weaker estimates
on the bounds for the maxima as given in
Theorem~\ref{thm:infinitemaxgibbs}. These latter bounds are still
sufficient to obtain a logarithm law for the entrance time via
Proposition~\ref{prop.maxhit}, and commensurate with
Corollary~\ref{t2}. The results are in fact consistent with the
approaches used in \cite{GK}, where a logarithm law of the hitting
time function is obtained using (standard) strong Borel--Cantelli
assumptions.  Notice further, that we could have worked directly with
the system $(f,X,\mu)$, and the Borel--Cantelli property achieved in
Theorem~\ref{thm:infiniteSBC} to deduce results on almost sure growth
rates of maxima and hitting times.  However if we had done this, we
would have lost information in the (logarithmic) asymptotic
corrections and obtained suboptimal results.

\subsection{Applications: Hitting times, extremes, and run length for intermittent maps with a $\mathbf{\sigma}$-finite measure.}
\label{sec.intermittent}
Let $S^{1}=[0,1)$, and we finally consider application of our results of
Sections~\ref{sec:mainresults}
to the following family of intermittent maps
$(f_{\alpha},S^{1},\mu)$, where $f_{\alpha} \colon S^{1} \to S^{1}$ is defined by
\begin{equation}\label{eq.LSVmap}
f_{\alpha} (x) = \left\{ \begin{array}{ll} x ( 1 + 2^\alpha x^\alpha )
  & x < 1/2, \\ 2x - 1 & x \geq 1/2. \end{array} \right.
\end{equation}
See Figure~\ref{fig.mp}.
\begin{figure}
  \includegraphics{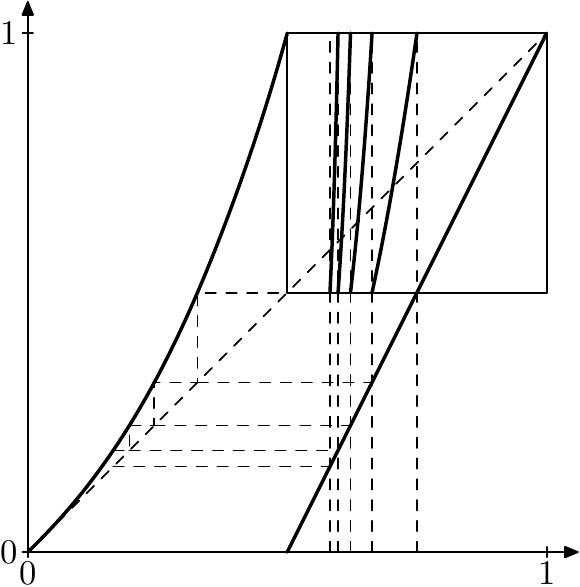}
  \caption{An intermittent map, with induced map in top right quadrant.}\label{fig.mp}
\end{figure}
In the dynamical systems literature, these maps have been well
studied, e.g.\ for their mixing properties \cite{Gouezel, LSV,
  Melbourne-Terhesiu, Young, Young2}, and also their recurrence properties in
relation to entrance time statistics, extremes and dynamical
Borel--Cantelli results \cite{Aar2,Gouezel,HNT,Kim,V.et.al}, to name a
few. We shall focus on the case $\alpha\geq 1$, for which the map
preserves a $\sigma$-finite measure $\mu$, with $\mu(X)=\infty$
\cite{Aar2}.  This measure is absolutely continuous with respect to the
Lebesgue measure, but has a non-normalisable density function. One
ergodic, invariant probability measure which is physically meaningful for this map is the Dirac measure at $\{0\}$, and
thus at first glance the (long-run) statistics appear to be
trivial. However, the orbit of Lebesgue almost-every $x\in S^{1}$ is
dense, and thus it is natural to study the asymptotic recurrence
behaviour of typical points captured by the statistics of the infinite
measure $\mu$. This makes the problem of analysing the statistics of
entrance times, extremes and dynamical Borel--Cantelli results an
interesting one.

To apply the results already established in the earlier part of
Section~\ref{sec:mainresults}, we consider an induced
system $(f_Y,Y,\mu_Y)$ (we drop the subscript $\alpha$) together with
a first return time function $R \colon Y\to\mathbb{N}$. We take
$Y=(1/2,1]$, and hence take return-time $R \colon Y\to\mathbb{N}$ defined by
\[
R (x) = \min \{\, n \geq 1 : f^n (x) \in Y\,\},
\]
with $x\in Y$. As before, we write
\[
f_Y(x) = f^{R(x)} (x), \qquad R_j (x) = R (f_{Y}^j (x)).
\]
We summarise key properties of $f$ and $f_Y$ as follows.
Define the sequence $(x_n)$ by
\[
x_{-1} = 1, \qquad x_0 = 1/2, \qquad f(x_{n+1}) = x_n,
\]
keeping $x_n<1/2$ for all $n\geq 1$. Then we have the following
asymptotic relation,
\[
x_n\sim (\alpha n)^{-1/\alpha}.
\]
The map $f_Y$ is uniformly expanding, and moreover there is a
countable Markov partition $\mathcal{P}=\{\, Y_i : i\in\mathbb{N}
\,\}$, with $R|_{Y_i}$ constant, and $f_Y(Y_i)=Y$. To be more
explicit, for $n\geq 1$, let $z_n \in [1/2,1]$ be such that
$f(z_n)=x_{n-1}$, and let $z_0=1$. If we write $Y_n=[z_n,z_{n-1}]$,
Then $R|_{Y_n}=n$. If
$\mathcal{P}_n=\bigvee_{i=1}^{n}f_Y^{-i}\mathcal{P}$, then for all
$k\leq n$, the iterate $f_Y^k$ satisfies uniform bounded distortion
estimates on all $\omega\in\mathcal{P}_n$. In particular there is a
$f_Y$-invariant probability measure $\mu_Y$ on $Y$, which is
  equivalent to Lebesgue measure. Thus, this map satisfies
  \ref{G-M1}--\ref{G-M6}, and the function $R$ is a first return time
  to $Y$. In particular the return time $R \colon Y\rightarrow
    \mathbb{R}$ has a behaviour
\begin{equation*}
R\left(\frac{1}{2}+x\right)=c(x)x^{-\alpha }
\end{equation*}
with $m\leq c(x)\leq M.$ For $\alpha \geq 1$, $R$ plays the role of an
infinite observable, and hence our theoretical results on maxima growth, hitting time
laws, and Borel--Cantelli results can be
applied. In the context of this example, we also state the results for Lebesgue
measure, rather than the infinite measure $\mu$.  On compact subsets
in $S^{1}\backslash\{0\}$, the restriction of $\mu$ is equivalent to Lebesgue measure.

\subsubsection{Results on extremes and entrance time laws.}
We first study the almost sure growth rate of the maximum process
\[
M_n (x) = \max \{\, \psi (d (f^j x, \tilde{x})) : 0 \leq j < n \,\},
\]
where $\psi \colon [0,\infty)\to\mathbb{R}$ is monotone decreasing
  function, taking its maximum at $0$, and $\tilde{x}\in X$ is given.

\begin{thm} \label{thm:maximaint}
  Suppose $(f_{\alpha},S^{1},\mu)$ is an intermittent map as
  defined in equation \eqref{eq.LSVmap} for $\alpha\geq 1$. Consider
  the observable function $\phi(x)=\psi(d(x,\tilde{x}))$, where $\psi
  \colon [0,\infty) \to \mathbb{R}$ is a monotonically decreasing
  function. Then for all $\varepsilon>0$ and Lebesgue almost all $x
  \in ~S^{1}$, we have the following cases.
  \begin{enumerate}
  \item If $\tilde{x}=0$, then
    \[
    \psi \biggl( \frac{(\log n)^{2 + \varepsilon}}{n^{1/\alpha}}
    \biggr) \leq M_n(x) \leq \psi \biggl( \frac{1}{n^{1/\alpha} (\log
      n)^{2 + \varepsilon}} \biggr),
    \]
    when $n$ is large enough.
  \item If $\tilde{x}\in(0,1]$ we have
    \[
    \psi \biggl( \frac{(\log n)^{4 + \varepsilon}}{n^{1/\alpha}}
    \biggr) \leq M_n(x)\leq \psi \biggl( \frac{1}{n^{1/\alpha} (\log
      n)^{2 + \varepsilon}} \biggr)
    \]
    when $n$ is large enough.
  \end{enumerate}
\end{thm}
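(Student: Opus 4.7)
The plan is to combine the explicit excursion structure of the intermittent map with the general framework on maxima of Birkhoff sums for Gibbs--Markov induced systems. I would first check that the induced system $(f_Y, Y, \mu_Y)$ with $Y = (1/2, 1]$ satisfies assumptions~\ref{G-M1}--\ref{G-M6}, with $R$ playing the role of the observable. The partition $\{Y_n\}$ is Markov with $R|_{Y_n} = n$; uniform expansion and bounded distortion of $f_Y$ are standard; and the return-time tail follows from the asymptotic $x_n \sim (\alpha n)^{-1/\alpha}$, which yields $|Y_n| \asymp n^{-1-1/\alpha}$ and hence $\mu_Y\{R = n\} \asymp n^{-1-1/\alpha}$. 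Thus \ref{G-M6} holds with $\beta = 1/\alpha$, so that Proposition~\ref{prop:SnR} produces
\[
  n^{\alpha}(\log n)^{-\alpha-\varepsilon} \;\leq\; M^R_n(x) \;\leq\; S^R_n(x) \;\leq\; n^{\alpha}(\log n)^{\alpha+\varepsilon}
\]
for $\mu_Y$-a.e.\ $x$ and eventually in $n$ (the boundary case $\alpha = 1$ requires separate treatment using Theorem~\ref{thm1} together with a Darling--Kac type renewal estimate, with the polynomial slack absorbed into~$\varepsilon$).

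For case~(1), $\tilde{x} = 0$, the central geometric observation is that during the excursion of length $R_j = n$ starting from $f_Y^j(x) \in Y_n$, the orbit visits precisely the intervals $[x_{n-1}, x_{n-2}], [x_{n-2}, x_{n-3}], \ldots, [x_0, 1/2]$ in that order. Hence the closest approach to $0$ over this excursion equals $x_{R_j - 1} \asymp R_j^{-1/\alpha}$, giving
\[
  \min_{0 \leq k < T} d(f^k x, 0) \;\asymp\; \bigl( M^R_{N(T)}(x) \bigr)^{-1/\alpha},
\]
where $N(T)$ is the number of returns to $Y$ up to time $T$. I would then (i) use $S^R_{N(T)}(x) \leq T$ directly, which forces $M^R_{N(T)} \leq T$ and hence $\min d \gtrsim T^{-1/\alpha}$, yielding the upper bound on $M_T$; and (ii) invert the bound $S^R_n \leq n^{\alpha}(\log n)^{\alpha+\varepsilon}$ to obtain $N(T) \gtrsim T^{1/\alpha}(\log T)^{-1-\varepsilon'}$, combine with $M^R_{N(T)} \geq N(T)^{\alpha}(\log N(T))^{-\alpha-\varepsilon}$, and deduce $\min d \leq T^{-1/\alpha}(\log T)^{2+\varepsilon}$, yielding the lower bound on $M_T$. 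The factor $2$ in the logarithm is transparent: it is the sum of one log from the extreme-value bound $M^R_n \asymp n^{\alpha}$ and one log from the renewal inversion $N(T) \asymp T^{1/\alpha}$.

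For case~(2), if $\tilde{x} \in Y$ I would apply Theorem~\ref{thm:infinitemaxgibbs} directly: the Gibbs--Markov structure of $f_Y$ and the positivity of the density of $\mu_Y$ at $\tilde{x}$ verify its hypotheses. Tracking the two sources of logarithm corrections -- the QSBC bound on the induced maxima/hitting time (contributing $(\log N)^{2+\varepsilon}$ in $f_Y$-time) and the inversion $N(T) \asymp T^{1/\alpha}$ (contributing another $(\log T)^{1+\varepsilon}$) -- gives the claimed $c_2 = 2+\varepsilon$ and $c_1 = 4+\varepsilon$. If $\tilde{x} \in (0, 1/2]$, I would choose the unique $k \geq 1$ with $\tilde{x} \in (x_k, x_{k-1})$ and reduce to the previous case by identifying, for each $r>0$, the preimage set
\[
  C_r^{(k)} := \bigcup_{n \geq k+1} \{\, y \in Y_n : f^{n-k}(y) \in B(\tilde{x}, r) \,\} \;\subset\; Y,
\]
and showing via bounded distortion of the branch $f^{n-k}\colon Y_n \to [x_k, x_{k-1}]$ (together with $|Y_n| \asymp n^{-1-1/\alpha}$) that $\mu_Y(C_r^{(k)}) \asymp r$ with constants depending only on $k$. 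The entry of the $f$-orbit into $B(\tilde{x}, r)$ then coincides with entry of the $f_Y$-orbit into $C_r^{(k)}$, and the analysis proceeds as in the case $\tilde{x} \in Y$.

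The main obstacle will be the sharp tracking of the two compounded logarithm corrections in case~(2) so as to obtain exactly $(\log n)^{4+\varepsilon}$ in the lower bound on $M_n$, rather than a larger exponent. This requires using the finer QSBC estimate of \cite{HNT} (as invoked in the proof of Theorem~\ref{thm:infinitemaxgibbs}) instead of a standard SBC estimate, so that the error exponent for the $f_Y$-maxima is tight, and then composing multiplicatively with the renewal inversion. A secondary but unavoidable subtlety is the boundary case $\alpha = 1$, where Proposition~\ref{prop:SnR} does not directly apply; here one either extracts the log-corrections via Theorem~\ref{thm1} plus Darling--Kac asymptotics for $S^R_n$, or derives the case from $\alpha > 1$ via a limiting argument with an appropriate uniformity in $\alpha$.
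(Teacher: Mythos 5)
Your proposal captures the core ideas and, for two of the three cases, matches the paper's argument closely: for $\tilde{x}=0$ you use the excursion geometry (closest approach during an excursion of length $R_j$ is $\asymp x_{R_j-1} \asymp R_j^{-1/\alpha}$) and compound the maximum bound from Proposition~\ref{prop:SnR} with the renewal inversion $k(n) \asymp n^{1/\alpha}$ to obtain the exponent $2$ on the logarithm; and for $\tilde{x}\in Y$ you apply Theorem~\ref{thm:infinitemaxgibbs} directly, with the QSBC correction contributing the additional factor that raises the lower-bound exponent to $4$. Both of these are exactly the paper's route. You are also right to flag the boundary case $\alpha=1$: then $\beta=1/\alpha=1$ falls outside the range $(0,1)$ in \ref{G-M6}, so Proposition~\ref{prop:SnR} does not literally apply and some patch (your suggested Darling--Kac/Theorem~\ref{thm1} route, or an ad hoc tail argument) is needed; the paper's proof simply does not comment on this.

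Where you genuinely diverge from the paper is the case $\tilde{x}\in(0,1/2)$. Your plan is to pull back the target balls $B(\tilde{x},r)$ to the sets
\[
C_r^{(k)} \;=\; \bigcup_{n\geq k+1}\{\,y\in Y_n : f^{n-k}(y)\in B(\tilde{x},r)\,\}\;\subset\;Y ,
\]
and treat their entrance times under $f_Y$. The paper instead enlarges the inducing domain to $\tilde{Y}=[x_m,1)\ni\tilde{x}$ with a modified first-return time $\tilde R$, checks that the new induced map is again Gibbs--Markov with $\tilde R_j\leq R_j\leq\tilde R_j+m$, and then invokes Theorem~\ref{thm:infinitemaxgibbs} verbatim with $\tilde{Y}$ in place of $Y$; the target is then a genuine ball around $\tilde{x}$ in the inducing set, which is what the cited QSBC estimate \cite[Proposition~3.4]{HNT} is stated for. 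Your route is geometrically reasonable and the measure estimate $\mu_Y(C_r^{(k)})\asymp r$ is correct by bounded distortion, but there is a real gap left open: the sets $C_r^{(k)}$ are countable unions of subintervals, one per $Y_n$, that are \emph{not} balls and not cylinders of the $f_Y$-Markov structure, so the QSBC (or even a plain SBC) property for these targets is not covered by the results you invoke and would have to be re-established. Until that is done, ``the analysis proceeds as in the case $\tilde{x}\in Y$'' is an unjustified leap, and the paper's re-inducing trick is the cleaner way to avoid it.
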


We also obtain the corresponding law for the entrance time.
\begin{cor}\label{cor.logithmnlaw}
  For the intermittent maps given in equation \eqref{eq.LSVmap},
let $\tilde{x}\in (\frac{1}{2},1]$.
    Then the hitting time behaviour in balls around $\tilde{x}$ scales as
  \begin{equation*}
    \lim_{r\rightarrow 0 }\frac{\log [\tau_{r}(x,\tilde{x})]}{-\log r}={\max
      (1,\alpha )},
  \end{equation*}
for Lebesgue-a.e.\ $x\in S^1$.
\end{cor}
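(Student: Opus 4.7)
The plan is to deduce Corollary~\ref{cor.logithmnlaw} as a direct consequence of Theorem~\ref{thm:maximaint}(2) combined with Proposition~\ref{poi}, thereby passing from almost-sure growth of maxima to the logarithm law for the hitting time. I would work with the observable $\phi(x) = \psi(d(x,\tilde{x}))$ for $\psi(d) = d^{-1}$, so that $\phi(y) \geq u$ is equivalent to $y \in B(\tilde{x}, 1/u)$. Since $\tilde{x}\in(\tfrac12,1]$ lies in the interior of $Y=(\tfrac12,1]$, for all sufficiently small $r$ we have $B(\tilde{x},r)\subset Y$, and the identification
\[
\tau_r(x,\tilde{x}) \;=\; \tau^\phi_{1/r}(x)
\]
holds, with $\tau^\phi_u$ as in Proposition~\ref{poi}.

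Next, I would invoke Theorem~\ref{thm:maximaint}(2) with the above $\psi$. For every $\varepsilon>0$, Lebesgue almost every $x$ and all $n$ large, this yields
\[
\frac{n^{1/\alpha}}{(\log n)^{4+\varepsilon}} \;\leq\; M_n(x) \;\leq\; n^{1/\alpha}(\log n)^{2+\varepsilon}.
\]
Taking logarithms gives $\log M_n(x) = \tfrac{1}{\alpha}\log n + O(\log\log n)$, so that
\[
\liminf_{n\to\infty}\frac{\log M_n(x)}{\log n} \;=\; \limsup_{n\to\infty}\frac{\log M_n(x)}{\log n} \;=\; \frac{1}{\alpha}
\]
holds almost surely. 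The logarithmic corrections disappear after dividing by $\log n$, which is exactly the reason Proposition~\ref{poi} is applicable here even though no genuine scaling constant exists for $M_n$ itself.

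With both liminf and limsup of $\log M_n/\log n$ equal to $1/\alpha$, the ``moreover'' part of Proposition~\ref{poi} (with $a_1=a_2=1/\alpha$) yields
\[
\lim_{u\to\infty} \frac{\log \tau^\phi_u(x)}{\log u} \;=\; \alpha
\]
for almost every $x$. Substituting $u = 1/r$ (so that $\log u = -\log r$ and $\tau^\phi_u=\tau_r$) gives $\log\tau_r(x,\tilde{x})/(-\log r) \to \alpha$, and since $\alpha\geq 1$ we have $\alpha = \max(1,\alpha)$, completing the proof.

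The argument is essentially bookkeeping: Theorem~\ref{thm:maximaint} already does the heavy lifting, and Proposition~\ref{poi} converts the maxima asymptotics into the hitting-time asymptotics. The only point that deserves care is the implicit assumption that small balls around $\tilde{x}$ sit entirely inside the induced base $Y$; this is the reason the corollary restricts to $\tilde{x}\in(\tfrac12,1]$ rather than the whole of $(0,1]$ covered by Theorem~\ref{thm:maximaint}(2). Without this, one would have to separately account for visits to $B(\tilde{x},r)$ that do not coincide with returns to $Y$ — an issue that does not arise here.
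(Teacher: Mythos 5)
Your proof is correct and follows what is clearly the paper's intended route: the corollary is placed immediately after Theorem~\ref{thm:maximaint}, and Proposition~\ref{poi} was proved precisely so that maxima asymptotics translate into hitting-time logarithm laws. Taking $\psi(d)=d^{-1}$, extracting $\lim_n \log M_n(x)/\log n = 1/\alpha$ from the two-sided bound of Theorem~\ref{thm:maximaint}(2), invoking the ``moreover'' clause of Proposition~\ref{poi} with $a_1=a_2=1/\alpha$, and then identifying $\tau^\phi_{1/r}$ with $\tau_r(\cdot,\tilde{x})$ is exactly the bookkeeping the authors intended the reader to supply.

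One small inaccuracy in your closing discussion: the restriction to $\tilde{x}\in(\tfrac12,1]$ is not forced by your argument. The identification $\tau_r(x,\tilde{x})=\tau^\phi_{1/r}(x)$ is purely definitional and holds for any $\tilde{x}$, irrespective of whether $B(\tilde{x},r)\subset Y$; and Theorem~\ref{thm:maximaint}(2) is stated for all $\tilde{x}\in(0,1]$, having already absorbed the ``visits off $Y$'' issue by enlarging the inducing base. So the corollary, proved your way, holds verbatim for every $\tilde{x}\in(0,1]$. The paper's restriction to $(\tfrac12,1]$ is cosmetic (it keeps $\tilde{x}$ inside the canonical inducing set $Y$), not a constraint your argument needs.
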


\begin{rmk}\label{rmk.logithmnlaw}
We remark that for the intermittent maps $(f_{\alpha},S^{1},\mu)$,
even when the \emph{natural} invariant measure is infinite, it is
still absolutely continuous with respect to Lebesgue measure, with
local dimension 1 (except at the origin). In this case the
the exponent ${\alpha}$ plays the role of a rescaling factor
to get a logarithm law for this case.
\end{rmk}

As a comparison, we now consider the growth of maxima in the finite measure case.
Using the inducing technique, we can also apply the previous arguments
to the family $(f_{\alpha},S^{1},\mu)$ in the case where $\alpha<1$.
This allows us to improve on the result
stated in \cite[Corollary~4.2]{HNT}, which is primarily based on
dynamical Borel--Cantelli estimates for systems with polynomial
decay of correlations. For $\alpha < 1$, $\mu$ is now
a probability measure. Hence for almost every $x \in Y$, there is
  a $C = C(x)$ such that:
\[
n \leq \sum_{j = 0}^n R_j (x) \leq C n.
\]
That is, we have the asymptotic $S^{R}_n(x)\in [n,Cn]$, $\mu$-a.e. We therefore
have the following, and the proof follows step by step the arguments above.

\begin{cor}
  Suppose $(f_{\alpha},S^{1},\mu)$ is the map given in
  \eqref{eq.LSVmap}, and defined for $\alpha< 1$. Consider the
  observable function $\phi(x)=\psi(d(x,\tilde{x}))$, where $\psi
  \colon [0, \infty) \to \mathbb{R}$ is a monotonically decreasing
  function, and $\tilde{x}\in S^{1}$. Then for all $\varepsilon>0$ and
  Lebesgue almost every $x \in S^{1}$, we have
  \[
  \psi \biggl( \frac{(\log n)^{4 + \varepsilon}}{n} \biggr) \leq
  M_n(x) \leq \psi \biggl( \frac{1}{n(\log n)^{2 + \varepsilon}}
  \biggr),
  \]
  for all sufficiently large $N$.
\end{cor}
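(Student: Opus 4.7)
The plan is to mirror the proof of Theorem~\ref{thm:maximaint}(2), the only essential change being the growth rate of the return-time Birkhoff sums. First I would induce on $Y = (1/2, 1]$: as recalled in Section~\ref{sec.intermittent}, the map $(f_Y, Y, \mu_Y)$ is Gibbs--Markov satisfying \ref{G-M1}--\ref{G-M5}, and the return time $R$ has tail $\mu_Y\{R = n\} \sim n^{-1/\alpha - 1}$. For $\alpha < 1$ one has $1/\alpha > 1$, so $R \in L^{1}(\mu_Y)$, and Birkhoff's ergodic theorem delivers the asserted bound $n \leq S_n^R(x) \leq C(x)\,n$ for all sufficiently large $n$ and $\mu_Y$-a.e.\ $x$. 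This replaces the power $n^{1/\alpha}$ that governed the clock in the infinite-measure case by pure linear growth.

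With the induced system being Gibbs--Markov, I would apply the quantitative strong Borel--Cantelli argument behind Theorem~\ref{thm:infinitemaxgibbs} (cf.\ \cite[Proposition~3.4]{HNT}) to the observable $\psi(d(\cdot, \tilde x))$, using the shrinking balls $B(\tilde x, r_n)$ with $r_n \sim (\log n)^{4+\varepsilon}/n$ and $r_n \sim 1/(n(\log n)^{2+\varepsilon})$. Since $\mu_Y$ is equivalent to Lebesgue with positive density at $\tilde x \in Y$, we have $\mu_Y(B(\tilde x, r)) \sim r$, and summability of $\sum_n 1/(n(\log n)^{2+\varepsilon})$ together with a second Borel--Cantelli on quasi-independent blocks produces, $\mu_Y$-a.e.,
\[
\psi\Bigl(\tfrac{(\log j)^{4+\varepsilon}}{j}\Bigr) \leq M_j^{f_Y}(x) \leq \psi\Bigl(\tfrac{1}{j(\log j)^{2+\varepsilon}}\Bigr).
\]
Now I transfer to $f$: the first $n$ iterates visit $Y$ some $J(n)$ times, and by Step~1, $n/C(x) \leq J(n) \leq n$ eventually. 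Because $\phi$ is bounded off a neighbourhood of $\tilde x \in Y$, the maximum $M_n^f(x)$ coincides with $M_{J(n)}^{f_Y}$ evaluated on the orbit starting from the first return to $Y$, so substituting $j = J(n)$ into the display and absorbing multiplicative constants as well as logarithmic discrepancies (e.g.\ $\log J(n) \sim \log n$) into a slightly larger $\varepsilon$ produces the announced bounds.

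The Lebesgue-a.e.\ statement on $S^1$ follows because Lebesgue-a.e.\ $x \in S^1$ eventually enters $Y$, and $\mu$ is equivalent to Lebesgue on compact subsets of $S^1 \setminus \{0\}$. The hard part will be the case $\tilde x \in [0, 1/2]$: there the orbit can approach $\tilde x$ during excursions outside $Y$, so $M_n^f$ is no longer determined solely by visits to $Y$, and one must augment the argument by tracking the deterministic descending sequence $x_n \sim (\alpha n)^{-1/\alpha}$ of pre-images of $1/2$ on the escape branch, in analogy with case (1) of Theorem~\ref{thm:maximaint}. Propagating the sharp logarithmic exponents $4+\varepsilon$ and $2+\varepsilon$ through this composition---especially keeping the lower exponent at $4+\varepsilon$ rather than something larger---is the most delicate bookkeeping in the proof.
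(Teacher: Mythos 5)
Your overall approach matches what the paper intends: the paper gives no separate proof for this corollary, remarking only that ``the proof follows step by step the arguments above,'' and your sketch---induce on $Y=(1/2,1]$, use $R\in L^1(\mu_Y)$ and Birkhoff's theorem to replace the clock estimate $S^R_n(x)\asymp n^{1/\alpha}$ of Proposition~\ref{prop:SnR} by the linear asymptotic $n\leq S^R_n(x)\leq C(x)\,n$, apply the quantitative Borel--Cantelli bound for the induced Gibbs--Markov map, and push forward to $f$---is precisely the intended argument.

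Two points merit attention. First, your intermediate display for the induced-map maxima carries exponents $4+\varepsilon$ and $2+\varepsilon$, but the estimate from \cite[Proposition~3.4]{HNT} actually used in the proof of Theorem~\ref{thm:infinitemaxgibbs} gives $\psi((\log j)^3/j)\leq \hat M_j(x)\leq\psi(1/(j(\log j)^{1+\varepsilon}))$; your bounds are valid a-fortiori weakenings of this, but they misattribute where the extra $\log$-powers in \eqref{eq.maxquant} come from. In the infinite-measure case the extra power of $\log$ is produced by the clock conversion \eqref{eq:n-k-relation}, which for $\alpha<1$ collapses to $k\asymp n$ and introduces nothing. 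So your closing worry about ``keeping the lower exponent at $4+\varepsilon$ rather than something larger'' is inverted: the argument actually delivers the sharper exponents $3+\varepsilon$ and $1+\varepsilon$, and the stated exponents in the corollary are simply a uniform weakening borrowed from Theorem~\ref{thm:maximaint}(2). Second, and more substantively, for $\tilde{x}\in(0,1/2)$ you propose to track the descending sequence $x_n$ in analogy with case (1) of Theorem~\ref{thm:maximaint}; but that tracking works only because at $\tilde{x}=0$ the closest approach during an excursion of length $m$ is exactly $x_m$. For $\tilde{x}\in(0,1/2)$ the closest approach depends on the fine position within the cells $(x_m,x_{m-1}]$, not merely the excursion length, and your sketch does not control it. The paper handles this case by enlarging the inducing set to $\tilde Y\supset Y$ with $\tilde{x}$ in its interior, verifying the new first-return map still satisfies \ref{G-M1}--\ref{G-M6} and $\tilde R\leq R\leq\tilde R+m$, and then applying Theorem~\ref{thm:infinitemaxgibbs} directly. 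You should adopt that case split rather than attempt tracking on $(0,1/2)$.
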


For example, in the case where $\phi(x)=-\log d(x,\tilde{x})$, we have
\[
\lim_{n\to\infty}\frac{M_n(x)}{\log n}=1.
\]
Notice that in the case where $\tilde{x}=0$, the local dimension $d_{\mu}(\tilde{x})$ is
$1-\alpha$, and so we get an anomaly in the growth of $M_n$ at this point.
However, for systems with superpolynomial decay of correlations we
generally expect $\frac{M_n(x)}{\log n}$ to converge to
$1/d_{\mu}(\tilde{x})$.

\subsubsection{Dynamical Borel--Cantelli results}
For the family of intermittent maps $(f_{\alpha},S^{1},\mu)$, we can
apply the techniques and results of Section~\ref{BC} to study
dynamical Borel--Cantelli results for shrinking targets. However,
using the dynamical features of these maps we can extend such results
off the inducing set $Y=(1/2,1]$.

\begin{cor}\label{cor.BC}
Consider the intermittent maps $(f_{\alpha},S^{1},\mu)$ given in equation \eqref{eq.LSVmap},
for $\alpha\geq 1$.  Suppose $(B_n)$ is a decreasing sequence of balls, with
  \[
  \sum_{n\geq 1} \mu (B_{n^{\alpha+\varepsilon_1}}) =
  \infty
  \]
  for some $\varepsilon_1\in(0,\alpha)$, and
  $\{0\}\not\in\cap_n\overline{B}_n$. Then for $\mu$-almost every $x$,
and for all $\epsilon\in(0,\varepsilon_1]$
  \begin{equation*} 
    \sum_{k = 1}^{n^{\frac{1}{\alpha + \varepsilon}}}
    \mu(B_{k^{\alpha+\varepsilon}}) \leq \sum_{k=1}^{n} 1_{B_k}
    (f^k(x)) \leq \sum_{k=1}^{n^{\frac{1}{\alpha-\varepsilon}}
    }\mu(B_{k^{\alpha-\varepsilon}})
  \end{equation*}
  holds eventually as $n\to\infty$.
\end{cor}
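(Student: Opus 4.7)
The plan is to reduce Corollary~\ref{cor.BC} to Theorem~\ref{thm:infiniteSBC} by lifting the indicator sequence $(1_{B_n})$ to a sequence supported on the base $Y=(1/2,1]$ of the induced Gibbs--Markov system, and then to handle the resulting index shift in the lower bound via Proposition~\ref{prop:SnR}. Since $(B_n)$ is decreasing and $\{0\}\notin\bigcap_n\overline{B}_n$, there exist $\delta>0$ and $N=N(\delta)$ such that $B_n\subset[\delta,1]$ eventually, and so $B_n$ meets at most $N$ levels of the tower representation of $(f,X,\mu)$ over $(f_Y,Y,\mu_Y)$. Set
\[
\tilde p_n(y)\;=\;\sum_{j=0}^{R(y)-1}1_{B_n}(f^j(y)),\qquad y\in Y,
\]
extended by $0$ on $X\setminus Y$. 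The sequence $(\tilde p_n)$ is decreasing, uniformly bounded by $N+1$, supported on $Y$, and Kac's formula gives $\mu(\tilde p_n)=\mu_Y(\tilde p_n)=\mu(B_n)$; in particular the divergence hypothesis becomes $\sum_n\mu(\tilde p_{n^{\alpha+\varepsilon_1}})=\infty$. Each $\tilde p_n$ is a finite sum of indicators of intervals of $Y$ (preimages of $B_n$ through the first $N$ excursion iterates of $f$), so standard Gibbs--Markov theory (see e.g.\ \cite{Kim}) shows that any subsequence $(\tilde p_{n_k})$ with divergent masses is strong Borel--Cantelli for $f_Y$. All hypotheses of Theorem~\ref{thm:infiniteSBC} thus hold for $(\tilde p_n)$.

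For the upper bound, write $T_l(x)=\sum_{i<l}R(f_Y^i(x))$ and use the monotonicity $B_{T_l+j}\subset B_{T_l}$ for $0\le j<R_l$ to obtain $\sum_{k=T_l}^{T_{l+1}-1}1_{B_k}(f^k(x))\le\tilde p_{T_l}(f_Y^l(x))$. Summing, and noting that since $\tilde p_k$ is supported on $Y$ one has $\sum_{l:\,T_l\le n}\tilde p_{T_l}(f_Y^l(x))=\sum_{k=1}^{n}\tilde p_k(f^k(x))$, I conclude $\sum_{k=1}^{n}1_{B_k}(f^k(x))\le\sum_{k=1}^{n}\tilde p_k(f^k(x))+O(1)$, and Theorem~\ref{thm:infiniteSBC}(1) applied to $(\tilde p_n)$ supplies the desired upper estimate.

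The main difficulty lies in the lower bound. The analogous monotonicity $B_{T_l+j}\supset B_{T_{l+1}}$ for $T_l\le k<T_{l+1}$ produces only the \emph{shifted} sum $S_n(x)\ge\sum_{l:T_{l+1}\le n}\tilde p_{T_{l+1}}(f_Y^l(x))$, and this shift cannot be absorbed through Theorem~\ref{thm:infiniteSBC} because the heavy tails of $R$ (with $\beta=1/\alpha\le 1$) prevent $T_{l+1}/T_l\to 1$. Instead I apply Proposition~\ref{prop:SnR} to $R$ on the Gibbs--Markov base $(f_Y,Y,\mu_Y)$ (which has tail index $\beta=1/\alpha$) to obtain the deterministic envelope $T_l\le l^{\alpha+\varepsilon}$ $\mu$-a.e.\ eventually, the logarithmic factors being absorbed into an arbitrarily small polynomial loss $\varepsilon$. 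This yields both $\tilde p_{T_{l+1}}\ge\tilde p_{l^{\alpha+\varepsilon}}$ and $L(n):=\#\{l:T_l\le n\}\ge n^{1/(\alpha+\varepsilon)}$, so
\[
S_n(x)\;\ge\;\sum_{l=1}^{n^{1/(\alpha+\varepsilon)}}\tilde p_{l^{\alpha+\varepsilon}}(f_Y^l(x)).
\]
The right-hand side is a standard strong Borel--Cantelli sum for the Gibbs--Markov map $f_Y$ against the decreasing sequence $(\tilde p_{l^{\alpha+\varepsilon}})$, whose masses diverge by hypothesis, and is therefore asymptotic to $\sum_{l=1}^{n^{1/(\alpha+\varepsilon)}}\mu(B_{l^{\alpha+\varepsilon}})$; a final relabelling of $\varepsilon$ absorbs the multiplicative constant and gives the stated lower bound.
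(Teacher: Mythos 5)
Your proof is correct, but it follows a genuinely different route from the paper's. The paper's own proof of Corollary~\ref{cor.BC} is a two-line reduction: since $\{0\}\notin\cap_n\overline{B}_n$, one enlarges the inducing set to $\tilde Y=[x_m,1]$ (as in the proof of Theorem~\ref{thm:maximaint}), checks that the first-return map to $\tilde Y$ still satisfies \ref{G-M1}--\ref{G-M6}, and then applies Theorem~\ref{thm:infiniteSBC} directly with $p_n=1_{B_n}$ now supported on $\tilde Y$. Your approach instead keeps $Y=(1/2,1]$ fixed and lifts the indicator to the excursion sum $\tilde p_n(y)=\sum_{j<R(y)}1_{B_n}(f^j y)$. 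This is essentially the Young-tower projection argument the paper uses later in Section~\ref{sec:youngtowers} (compare with Corollary~\ref{cor.towerSBC}); so your route is precedented in the paper but not for this particular corollary. What the paper's route buys is brevity and a black-box application of Theorem~\ref{thm:infiniteSBC}, at the cost of reverifying the Gibbs--Markov axioms for the new inducing set. What your route buys is that the base map is untouched; the price you pay is (a) having to verify the strong Borel--Cantelli property for the sequence $(\tilde p_n)$, which are bounded sums of indicators of nested intervals rather than single indicators (this is standard, and you correctly note that Theorem~\ref{thm:infiniteSBC} is stated for decreasing non-negative functions, not just indicators, so the hypothesis can be met), and (b) having to handle the index shift in the lower bound, which forces you to re-run the inner sandwich argument of Theorem~\ref{thm:infiniteSBC} via Proposition~\ref{prop:SnR} rather than invoking it directly. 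Your observation that the shifted sum $\sum_{l}\tilde p_{T_{l+1}}(f_Y^l x)$ cannot simply be fed back into Theorem~\ref{thm:infiniteSBC} because the return-time tails are too heavy is exactly right and is the genuine subtlety your route has to surmount. The $\varepsilon$-bookkeeping at the end (using $T_l\le l^{\alpha+\varepsilon'}$ with $\varepsilon'<\varepsilon$ to absorb both the $(l+1)$-shift and the restriction on the summation range into the stated $\varepsilon$-loss) is slightly compressed in your write-up but standard; you should make explicit that the envelope exponent $\varepsilon'$ is chosen strictly smaller than the target $\varepsilon$ so that $(l+1)^{\alpha+\varepsilon'}\le l^{\alpha+\varepsilon}$ and $n^{1/(\alpha+\varepsilon')}-1\ge n^{1/(\alpha+\varepsilon)}$ hold eventually.
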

We remark that Corollary~\ref{cor.BC} is stated in a basic form so as
to highlight the applicability of our results to the family of
intermittent maps $f_\alpha$.  It is clear generalisations are
possible.
\begin{proof}
  Since $\{0\}\not\in\cap_n\overline{B}_n$, there exists $y>0$ and $n_0>0$ with
  $[0,y)\cap\left(\cap_{n>n_0}B_n\right)=\emptyset$.  Following the
    proof of Theorem~\ref{thm:maximaint}, we can construct a first return map
    $f_{Y}$ over an inducing set $Y$ with $[0,y)\cap
      Y=\emptyset$. This map will satisfy \ref{G-M1}--\ref{G-M6}. The
      remainder of the proof now follows step by step the proof of
      Theorem~\ref{thm:infiniteSBC}, as applied to the induced map.
\end{proof}

\subsubsection{Dynamical run length function and Erd\H{o}s--R\'{e}nyi law}\label{sec.runs}
In this section, we establish dynamical run length results for the
family of intermittent maps $(f_{\alpha},S^{1},\mu)$ in the case
$\alpha\geq 1$, i.e.\ for systems that admit a $\sigma$-finite
(infinite) invariant measure.  Here, we choose the natural partition
$Y^{(c)}=[0,1/2)$ and $Y=[1/2,1)$, and the run length functions $\xi_{n}^{(0)}$
and $\xi^{(1)}_{n}$ as specified in equation
\eqref{def_runlengthfunction} are defined accordingly to this
partition. We state the following result.

\begin{thm}\label{thm_runlength}
Suppose $(f_{\alpha},S^{1},\mu)$ is an intermittent map as
  defined in equation \eqref{eq.LSVmap} for $\alpha\geq 1$.
For Lebesgue almost every $x\in~S^{1}$,
    we have
  \begin{equation}\label{equ_runlength1}
    \lim_{n\to\infty}\frac{\xi^{(1)}_{n}(x)}{\log_{2} n}=\frac{1}{\alpha},
  \end{equation}
  \begin{equation}\label{equ_runlength0}
    \lim_{n\to\infty}\frac{\log\xi^{(0)}_{n}(x)}{\log n}=1.
  \end{equation}
\end{thm}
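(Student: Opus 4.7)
My plan is to pass to the induced Gibbs--Markov first return map $(f_Y, Y, \mu_Y)$, whose return time $R$ is an infinite observable satisfying \ref{G-M6} with $\beta = 1/\alpha$. Write $N(n) := \#\{\, 0 \leq i < n : f^i(x) \in Y \,\}$ and $R_j(x) := R(f_Y^j(x))$. Applying Proposition~\ref{prop:SnR} to $(f_Y, R)$ and inverting the Birkhoff-sum estimate for $R$ yields, for every $\varepsilon > 0$ and $\mu_Y$-a.e.\ $x \in Y$, the bounds
\begin{equation*}
n^{1/\alpha}(\log n)^{-1-\varepsilon} \leq N(n) \leq n^{1/\alpha}(\log n)^{1+\varepsilon}, \qquad n(\log n)^{-\alpha - \varepsilon} \leq M^R_{N(n)}(x) \leq n(\log n)^{\alpha+\varepsilon},
\end{equation*}
where $M^R_N(x) := \max_{0 \leq j < N} R_j(x)$.

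Equation \eqref{equ_runlength0} then follows immediately, since a maximal block of $0$'s in the symbolic sequence of $x$ corresponds to an orbit excursion into $Y^{(c)} = [0, 1/2)$ between two consecutive returns to $Y$, and the excursion starting at the $j$-th return has length exactly $R_j - 1$. Thus $\xi^{(0)}_n(x)$ equals either $M^R_{N(n)}(x) - 1$ or the length of the excursion in progress at time $n$; both are $n^{1+o(1)}$ by the above display, so taking logarithms gives $\log \xi^{(0)}_n / \log n \to 1$.

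For \eqref{equ_runlength1}, the key observation is that for $y \in Y$ the iterates $y, f(y), \ldots, f^{k-1}(y)$ all lie in $Y$ if and only if $y \in B_k := [1 - 2^{-k}, 1)$, because $f|_Y$ is the doubling branch $x \mapsto 2x - 1$. The set $B_k$ is precisely a length-$k$ Markov cylinder of $f_Y$ (of Jacobian $2^k$), so boundedness of the density of $\mu_Y$ on $Y$ gives $\mu_Y(B_k) \sim 2^{-k}$. Setting
\begin{equation*}
L_N(x) := \max \{\, k : \exists\, 0 \leq j \leq N, \ f_Y^j(x) \in B_k \,\},
\end{equation*}
one has $\xi^{(1)}_n(x) = 1 + L_{N(n)}(x) + O(1)$. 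A shrinking-target Borel--Cantelli argument will then give $L_N(x) = \log_2 N + O(\log\log N)$ almost surely: for the upper bound, along $N_\ell = 2^\ell$ with depths $m_\ell = \ell + C\log_2 \ell$ (any $C > 1$), one has $\sum_\ell N_\ell \mu_Y(B_{m_\ell}) \lesssim \sum_\ell \ell^{-C} < \infty$, so first Borel--Cantelli applies; for the lower bound, taking $m_\ell = \ell - C \log_2 \ell$ and exploiting the exponential decay of correlations of $f_Y$ on Lipschitz observables gives $\mathrm{Var}\bigl( \sum_{j<N_\ell} 1_{B_{m_\ell}}(f_Y^j x) \bigr) = O(N_\ell \mu_Y(B_{m_\ell}))$ uniformly in $\ell$, whence Chebyshev yields summable failure probability $O(\ell^{-C})$. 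Combining $L_{N(n)}(x) = \log_2 N(n) + O(\log\log n) = (1/\alpha) \log_2 n + O(\log\log n)$ with the bound on $N(n)$ then proves \eqref{equ_runlength1}.

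The main technical obstacle is producing the uniform-in-$m$ variance estimate needed for the lower bound: a Lipschitz approximation of $1_{B_m}$ at scale $2^{-m}$ has Lipschitz norm of order $2^m$, which must be balanced against the exponential mixing rate of $f_Y$. The bounded distortion property \ref{G-M3} is what makes this tractable, but constants need to be tracked carefully in order to stay within the $O(\log\log n)$ tolerance. The final extension from $\mu_Y$-a.e.\ $x \in Y$ to Lebesgue-a.e.\ $x \in S^{1}$ is routine: conservativity of $(f, S^1, \mu)$ ensures a finite first entry time of $x$ to $Y$, and a shift of the starting point affects each $\xi^{(j)}_n$ by at most an additive constant, which is absorbed into the limit.
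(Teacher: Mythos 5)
Your overall plan---inducing on $Y=[1/2,1)$, using Proposition~\ref{prop:SnR} to relate inducing time $N(n)$ to clock time $n$, and reducing each $\xi^{(j)}_n$ to an extremal-type quantity for the induced map---follows the paper's strategy, and for $\xi^{(0)}_n$ your argument is essentially the paper's computation carried out inline (the paper routes through the formula $\xi^{(0)}_n=\max_i\min\{R(f_\alpha^ix),n-i\}$ and Theorem~\ref{thm:maximaint}). For $\xi^{(1)}_n$ you take a genuinely different route: the paper converts the problem into a hitting-time problem for balls around $1$ via the binary-coding distance (Lemma~\ref{Lem_hittingandrunlength}) and then cites Corollary~\ref{cor.logithmnlaw}, whereas you attack the cylinder-target quantity $L_N$ directly by a shrinking-target Borel--Cantelli scheme with diverging depths $m_\ell$. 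This is a valid alternative, but it buys precision the theorem doesn't need at the cost of a non-trivial uniform-in-$m$ variance estimate that you do not establish; since $f_Y$ is Gibbs--Markov (hence SPDC for Lipschitz observables), you could instead invoke the paper's Proposition~\ref{prop.G} and Lemma~\ref{prop:htll} applied to $g(y)=1-y$ on $Y$ to get $\overline{H}(x,1)=\underline{H}(x,1)=1$ and hence $L_N/\log_2 N\to 1$ directly, sidestepping the variance computation entirely.

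Two points need to be tightened before the argument is complete. First, the variance bound $\mathrm{Var}(\sum_{j<N_\ell}1_{B_{m_\ell}}\circ f_Y^j)=O(N_\ell\mu_Y(B_{m_\ell}))$ for cylinders of depth $m_\ell\sim\log_2 N_\ell$ requires more than exponential decay of correlations for Lipschitz (or BV) test functions: a naive correlation bound only yields $|\mathrm{Cov}|\lesssim\theta^{|i-j|}$ with no factor of $\mu_Y(B_{m_\ell})$, and summing gives $O(N_\ell)$, not $O(N_\ell\mu_Y(B_{m_\ell}))$. One must exploit the Markov cylinder structure directly (e.g.\ $\mu_Y(B_m\cap f_Y^{-k}B_m)\lesssim\mu_Y(B_m)^2$ for $k\geq m$, and trivial bounds for $k<m$), and even then the near-diagonal terms need $C>1$ to be absorbed. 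This is doable but is exactly what the paper's cited hitting-time machinery packages for you. Second, the truncation at the time horizon $n$ is dismissed too quickly. Your statement that $\xi^{(0)}_n$ ``equals either $M^R_{N(n)}(x)-1$ or the length of the excursion in progress'' and that ``both are $n^{1+o(1)}$'' is not correct as written: the observed (truncated) length of the in-progress excursion can be as small as $1$. What saves you is that $\xi^{(0)}_n$ is a \emph{maximum}, so $\xi^{(0)}_n\geq M^R_{N(n)-1}(x)-1$ always holds (those excursions are complete before time $n$), and that is the inequality you should state and use. Similarly, $\xi^{(1)}_n(x)=1+L_{N(n)}(x)+O(1)$ is off: if $L_{N(n)}$ is realized at the last return before $n$, the run of $1$'s may be cut off by the window. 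The correct statement is a sandwich like $L_{N(n')}(x)\leq\xi^{(1)}_n(x)\leq L_{N(n)}(x)+O(1)$ with, say, $n'=n/2$, using that $L_{N(n)}=O(\log n)\ll n$ so the run fits into $[n',n]$; the paper's proof contains the analogous explicit chain of inequalities for $\xi^{(0)}_n$ precisely because the $O(1)$ you wrote cannot absorb this error.
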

This result is proved in Section~\ref{sec:runlength}.
It is interesting to note that the typical growth rate of $\xi_n^{(1)}$
depends on $\alpha$, while that of $\xi_n^{(0)}$ does not.
This is in contrast to the corresponding run length results in
the probabilistic cases \cite[Theorem~1]{CFZ18}, due to the additional scaling contribution
arising from the asymptotics of the return time function $R$.

Initialized by Erd\H{o}s--R\'{e}nyi's work\cite{Eroren70}, it is worth
to mention the dynamical run length function is connected to the
Erd\H{o}s--R\'{e}nyi strong law of large numbers. This relates to the
possible limits of the function
\begin{align*}
  \Upsilon(\varphi(x),n, K(n)):= & \max_{0\leq i\leq
    n-K(n)}\left\{S_{i+K(n)}(\varphi)(x)-S_{i}(\varphi)(x)\right\}\\ =&\max\left\{\,
  S_{K(n)}(\varphi)\circ T^{i}(x):0\leq i\leq n-K(n) \,\right\},
\end{align*}
as $n\to\infty$, for prescribed (window) function $K(n)$, and typical $x$.

Based on Theorem~\ref{thm_runlength}, we can easily obtain the
following corollary on Erd\H{o}s--R\'{e}nyi strong law for the particular
case of a characteristic function observable, and window length.

\begin{cor}\label{cor:runlength}
For the intermittent maps given in equation \eqref{eq.LSVmap}, we have
the following.
\begin{itemize}
\item[(1)] For every integer sequence $K(n)$ with
  \[
  \limsup_{n\to\infty}\frac{\alpha K(n)}{\log_{2}n}<1,
  \]
  we have for Lebesgue almost every $x\in S^{1}$
  \[
  \lim_{n\to\infty}\frac{\Upsilon(1_{Y}(x),n,K(n))}{K(n)}=1;
  \]
  \item[(2)] For every integer sequence $K(n)$ with $\limsup_{n\to\infty}\frac{\log K(n)}{\log n}<1$, we have for Lebesgue almost every $x\in S^{1}$.
  \[
  \lim_{n\to\infty}\frac{\Upsilon(1_{Y^{(c)}}(x),n,K(n))}{K(n)}=1.  \]
\end{itemize}
\end{cor}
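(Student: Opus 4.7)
The plan is to reduce both parts of Corollary~\ref{cor:runlength} to Theorem~\ref{thm_runlength} through the elementary observation that, for an indicator observable, the sliding-window maximum $\Upsilon$ equals its natural upper bound $K(n)$ exactly when a run of the corresponding symbolic digit is long enough.

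First I would establish the following equivalence. For $I \in \{Y, Y^{(c)}\}$, the quantity $\Upsilon(1_I(x), n, K(n))$ is bounded above by $K(n)$ since $0 \leq 1_I \leq 1$. Moreover, equality $\Upsilon(1_I(x), n, K(n)) = K(n)$ holds if and only if there exists $0 \leq i \leq n-K(n)$ with $f_\alpha^{i+k}(x) \in I$ for every $0 \leq k \leq K(n)-1$. Comparing with the definition \eqref{def_runlengthfunction}, this is precisely the statement that $\xi_n^{(j_I)}(x) \geq K(n)$, where $j_I = 1$ if $I = Y$ and $j_I = 0$ if $I = Y^{(c)}$. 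Since $\Upsilon(1_I(x), n, K(n))/K(n) \in [0,1]$, it therefore suffices to show that under the stated growth hypotheses on $K(n)$ one has $\xi_n^{(j_I)}(x) \geq K(n)$ eventually Lebesgue-a.e., which then forces $\Upsilon/K(n) = 1$ eventually.

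For part~(1), assume $\limsup_{n\to\infty} \alpha K(n)/\log_2 n = c < 1$ and fix $\delta > 0$ with $1 - \delta > c$. By \eqref{equ_runlength1}, for Lebesgue-a.e.~$x$ we eventually have $\xi_n^{(1)}(x) \geq (1-\delta)\log_2 n/\alpha$, whereas by the choice of $c$ we eventually have $K(n) \leq c\log_2 n/\alpha < (1-\delta)\log_2 n/\alpha$. Combining the two bounds yields $K(n) \leq \xi_n^{(1)}(x)$ for all sufficiently large $n$, giving the first claim. Part~(2) is entirely analogous: picking $\delta > 0$ with $1-\delta$ exceeding $\limsup_{n\to\infty}\log K(n)/\log n$, equation \eqref{equ_runlength0} provides $\xi_n^{(0)}(x) \geq n^{1-\delta}$ eventually Lebesgue-a.e., while eventually $K(n) \leq n^{1-\delta}$, so $\xi_n^{(0)}(x) \geq K(n)$ eventually and $\Upsilon(1_{Y^{(c)}}(x), n, K(n))/K(n) \to 1$.

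There is no substantive obstacle once the correspondence between the sliding-window maximum of a sum of indicators and the longest symbolic run of the associated digit is recognised; from that point the corollary follows directly from Theorem~\ref{thm_runlength}. It is worth noting that the argument actually yields the sharper statement that $\Upsilon(1_I(x), n, K(n)) = K(n)$ exactly for all sufficiently large $n$, which is stronger than the limit assertion of the corollary.
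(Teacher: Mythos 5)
Your proof is correct and takes essentially the same route as the paper's: both reduce the corollary to Theorem~\ref{thm_runlength} via the observation that $\Upsilon(1_I(x),n,K(n)) = K(n)$ precisely when the run-length $\xi_n^{(j_I)}(x) \geq K(n)$. You handle the $\limsup < 1$ hypothesis and part~(2) slightly more explicitly than the paper's terse proof, but the underlying argument is identical.
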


\begin{proof}
  Without loss of generality, we only prove item $(1)$ in
  Corollary~\ref{cor:runlength}. Since $K(n)\leq
  \frac{\log_{2}n}{\alpha}$, Theorem~\ref{thm_runlength} yields
  that there is at least one $n'<n-K(n)$, such that
  $\varepsilon_{n'+1}=\cdots=\varepsilon_{n'+K(n)}=1$ (as
  $n\to\infty$, and Lebesgue almost surely). Therefore, we have
  $\lim_{n\to\infty}\frac{\Upsilon(1_{Y}(x),n,K(n))}{K(n)}=1$,
  Lebesgue almost surely, as was to be proved.
\end{proof}

\section{Proof of Theorem~\protect\ref{thm1}.}
\label{sec:proofsuperpolynomial}

In this section we prove Theorem~\ref{thm1}. As shown in
Remark~\ref{aarmk} the upper bound for $S_n(x)$ as stated in Theorem~\ref{thm1} can be
recovered from Proposition~\ref{aa}.  To get estimates from below we
begin with the following proposition. Recall that condition (SPDC) is
superpolynomial decay of correlations with respect to Lipschitz observables.

\begin{prop} \label{pro1}
  Suppose the sets $A_{n} = \{\, x \in X : \phi (x) \geq n \,\}$ are
  such that $\phi$ has regular suplevels (Definition~\ref{reg}) and
  that the system $(f,X,\mu )$ satisfies the (SPDC) condition. Then
  for $\mu $-a.e.\ $x$,
  \begin{equation*}
    \lim_{n \to \infty} \frac{\log (\tau (x,A_{n}))}{- \log \mu
      (A_{n})} = 1.
  \end{equation*}
\end{prop}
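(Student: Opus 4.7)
The plan is to prove the two one-sided logarithm inequalities separately by Borel--Cantelli arguments along a polynomially sparse subsequence $n_j := \lfloor j^{c}\rfloor$, using the monotonicity $A_{n+1}\subseteq A_n$ together with the asymptotic $-\log\mu(A_n)\sim\alpha_\phi\log n$ from item (i) of Definition~\ref{reg} to interpolate between subsequence values. The lower bound is elementary and needs only $f$-invariance of $\mu$; the upper bound is the substantive part, where condition (SPDC) and the Lipschitz approximation afforded by item (ii) come into play.

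For the lower bound, fix $\varepsilon>0$ and set $T_n:=\mu(A_n)^{-(1-\varepsilon)}$. By $f$-invariance and a union bound over $0\leq k\leq T_{n_j}$,
\[
\mu\bigl\{\,x:\tau(x,A_{n_j})\leq T_{n_j}\,\bigr\}\leq (T_{n_j}+1)\mu(A_{n_j})\lesssim\mu(A_{n_j})^{\varepsilon}.
\]
Since $\mu(A_{n_j})\leq n_j^{-\alpha_\phi+o(1)}=j^{-c\alpha_\phi+o(1)}$, choosing $c>1/(\alpha_\phi\varepsilon)$ makes this summable, and Borel--Cantelli gives $\tau(x,A_{n_j})>T_{n_j}$ eventually a.e. Because $A_n\subseteq A_{n_j}$ for $n\in[n_j,n_{j+1})$ and $\log n_{j+1}/\log n_j\to 1$, one recovers $\liminf_{n}\log\tau(x,A_n)/(-\log\mu(A_n))\geq 1-\varepsilon$; letting $\varepsilon$ run through a countable sequence tending to $0$ closes the lower inequality.

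For the upper bound, set $T_n:=\mu(A_n)^{-(1+\varepsilon)}$ and approximate $1_{A_n}$ from below by a Lipschitz function $\psi_n$ obtained by cutting $\tilde\phi$ off at a level slightly below $\mu(A_n)^{\beta}$; one arranges $\psi_n\leq 1_{A_n}$, $\int\psi_n\,d\mu\geq c_1\mu(A_n)$ (using the comparison $\mu(A_{n+1})>c\mu(A_n)$ from (i)), and $\lVert\psi_n\rVert_{\mathrm{Lip}}\lesssim\mu(A_n)^{-\beta}$. Setting $N(x):=\sum_{k=0}^{T_n-1}\psi_n(f^k x)$, the inclusion $\{\tau(x,A_n)>T_n\}\subseteq\{N=0\}$ reduces matters, via Chebyshev, to estimating
\[
\mathrm{Var}(N)=\sum_{|k|<T_n}(T_n-|k|)\,\mathrm{cov}\bigl(\psi_n,\psi_n\circ f^{|k|}\bigr).
\]
Bounding covariances trivially by $\int\psi_n\,d\mu\lesssim\mu(A_n)$ for $|k|\leq K$, and via (SPDC) by $\lVert\psi_n\rVert_{\mathrm{Lip}}^{2}\,\Theta(|k|)$ for $|k|>K$, one chooses $K$ as a modest polynomial in $\mu(A_n)^{-1}$ so that the tail is dominated by the head, giving $\mathrm{Var}(N)\lesssim K T_n\mu(A_n)$. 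Hence
\[
\mu\{\tau(x,A_n)>T_n\}\leq\mu\{N=0\}\lesssim\frac{K}{T_n\mu(A_n)}\lesssim K\mu(A_n)^{\varepsilon},
\]
which for a suitable choice of $K$ is summable along $n_j$ once $c$ is large; a second Borel--Cantelli and the same interpolation argument give $\limsup\log\tau(x,A_n)/(-\log\mu(A_n))\leq 1+\varepsilon$.

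The main obstacle will be the variance estimate: since $1_{A_n}$ itself is not Lipschitz, one must work with the approximant $\psi_n$, whose Lipschitz seminorm grows like $\mu(A_n)^{-\beta}$; what makes the balance $\lVert\psi_n\rVert_{\mathrm{Lip}}^{2}\,\Theta(K)\lesssim\mu(A_n)^{1+2\beta}$ achievable with $K$ only a modest power of $\mu(A_n)^{-1}$ is precisely the super-polynomial decay guaranteed by (SPDC). The degenerate case $\alpha_\phi=0$ is handled by the same scheme with $n_j$ chosen so that $\mu(A_{n_j})$ decreases geometrically, since then the absolute lower bound $\tau\geq 1$ already gives a ratio tending to unity once $-\log\mu(A_n)\to\infty$.
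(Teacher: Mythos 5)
Your argument is correct in its essentials but takes a genuinely different route from the paper's. The paper's proof is very short: it sets $V_r=\{\tilde\phi\leq r\}$ using the Lipschitz function $\tilde\phi$ of Definition~\ref{reg}(ii), observes $A_n=V_{\mu(A_n)^\beta}$, computes $d_\mu(\tilde\phi)=1/\beta$, and then invokes the black-box Proposition~\ref{prop.G} (a hitting-time dimension result from \cite{G}, valid under (SPDC) for Lipschitz sublevel sets) together with the elementary Lemma~\ref{lemmino} to pass from the continuous parameter $r$ to the sequence $r_n=\mu(A_n)^\beta$. You instead re-derive the needed content from scratch: a first-moment Borel--Cantelli bound for the lower inequality and a second-moment (Chebyshev plus (SPDC)-controlled variance) Borel--Cantelli bound for the upper inequality, both along a sparse subsequence with interpolation. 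This is a valid, self-contained alternative and essentially reconstructs what lies inside the cited black box; the paper's route buys brevity and modularity, yours buys transparency. Two points need tightening. First, in the Lipschitz approximant: cutting at $\mu(A_{n+1})^\beta$ does not control $\lVert\psi_n\rVert_{\mathrm{Lip}}$, because $\mu(A_{n+1})>c\mu(A_n)$ is a \emph{lower} bound and gives no lower bound on the gap $\mu(A_n)^\beta-\mu(A_{n+1})^\beta$. One should instead cut at $\mu(A_{n'})^\beta$ for the first $n'>n$ with $\mu(A_{n'})\leq(1-\delta_0)^{1/\beta}\mu(A_n)$; the hypothesis $\mu(A_{n+1})>c\mu(A_n)$ is then exactly what guarantees $\mu(A_{n'})\gtrsim\mu(A_n)$, hence $\int\psi_n\,d\mu\gtrsim\mu(A_n)$. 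Second, the closing sentence on the $\alpha_\phi=0$ case is wrong as written: $\tau\geq 1$ only gives a trivial lower bound of $0$ for the logarithmic ratio, not $1$. The correct fix is the one you half-state: choose $n_j$ so that $\mu(A_{n_j})$ decreases geometrically and run both Borel--Cantelli estimates exactly as before; both bounds then go through.
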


The proof of Proposition~\ref{pro1} is a direct consequence of the main
result of \cite{G}, which we recall here. Let $g$ be a Borel measurable
function such that $g\geq 0$ on $X$. Consider sublevel sets
\[
V_{r}=\{\,x\in X:g(x)\leq r\,\},
\]
and let us define indicators for the power law
behaviour of the hitting time to the set $V_{r}$ as $r\rightarrow 0$ by
\begin{equation*}
  \overline{H}(x,g)=\limsup_{r\rightarrow 0}\frac{\log \tau
    (x,V_{r})}{-\log (r)}\quad \text{and}\quad
  \underline{H}(x,g)=\liminf_{r\rightarrow 0}\frac{ \log \tau
    (x,V_{r})}{-\log (r)}.  
\end{equation*}
In this way if $\overline{H}(x,g)=\underline{H}(x,g)=H(x,g)$, then $\tau
(x,V_{r})$ scales like $r^{-H(x,g)}$ for small $r$. By analogy with the
definition of local dimension of a measure let us consider
\begin{equation*}
  \overline{d}_{\mu }(g)=\limsup_{r\rightarrow 0}\frac{\log \mu
    (V_{r})}{\log (r)}\qquad \text{and}\qquad \underline{d}_{\mu
  }(g)=\liminf_{r\rightarrow 0} \frac{\log \mu (V_{r})}{\log (r)}.
\end{equation*}
In the following proposition, we deduce that $\overline{H}(x,g)=\underline{H}(x,g)=H(x,g)$ is a typical outcome
in the case where $(f,X,\mu)$ is rapidly mixing.

\begin{prop}[\protect\cite{G}]
  \label{prop.G} Suppose $g\colon X\rightarrow \mathbb{R}^{+}$ is Lipschitz, the system $(f,X,\mu)$ satisfies condition (SPDC), and $\underline{d}_{\mu
  }(g)=\overline{d}_{\mu }(g)=d_{\mu }(g)<\infty $.  Then for
  $\mu$-a.e.\ $x\in X$ it holds that
  \begin{equation*}
    \overline{H}(x,g)=\underline{H}(x,g)=d_{\mu }(g).
  \end{equation*}
\end{prop}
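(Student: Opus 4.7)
The plan is to prove the two inequalities $\overline{H}(x,g)\le d_\mu(g)$ and $\underline{H}(x,g)\ge d_\mu(g)$ separately, in each case along a geometric sequence $r_n=q^n$ for a fixed $q\in(0,1)$, and then upgrade to all small $r>0$ by exploiting the monotonicity of $\tau(x,V_r)$ in $r$ together with the hypothesis that $\mu(V_r)$ is regularly varying in the sense that $\log\mu(V_r)/\log r\to d_\mu(g)$. For each direction, I would fix $\varepsilon>0$, identify a bad event whose measure is summable in $n$, apply Borel--Cantelli to get a $\mu$-full set on which the desired inequality holds eventually, and then let $\varepsilon\to 0$.

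For the lower bound $\underline{H}(x,g)\ge d_\mu(g)$, the first-moment method suffices. Set $N_n=\lfloor\mu(V_{r_n})^{-1+\varepsilon}\rfloor$. Since $\{\,x:\tau(x,V_{r_n})\le N_n\,\}\subset\bigcup_{k=1}^{N_n}f^{-k}(V_{r_n})$, invariance of $\mu$ and subadditivity give that its measure is at most $N_n\mu(V_{r_n})=\mu(V_{r_n})^{\varepsilon}$. Because $\mu(V_{r_n})\le r_n^{d_\mu(g)-\varepsilon'}$ and $r_n$ is geometric, the sum over $n$ is finite. Borel--Cantelli yields $\tau(x,V_{r_n})>N_n$ eventually, and hence $\underline{H}(x,g)\ge d_\mu(g)-O(\varepsilon)$ for $\mu$-a.e.~$x$.

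For the upper bound $\overline{H}(x,g)\le d_\mu(g)$, I would use a second-moment (Chebyshev) argument and it is here that (SPDC) is invoked. Put $N_n=\lceil\mu(V_{r_n})^{-1-\varepsilon}\rceil$ and consider the Birkhoff counter $Z_n(x)=\sum_{k=1}^{N_n}1_{V_{r_n}}(f^kx)$, so that $\{\tau(x,V_{r_n})>N_n\}\subset\{Z_n=0\}$. Then $\mathbb{E}[Z_n]=N_n\mu(V_{r_n})=\mu(V_{r_n})^{-\varepsilon}$, and $\mu(Z_n=0)\le \mathrm{Var}(Z_n)/\mathbb{E}[Z_n]^2$. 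The variance expands as a sum of correlations $\mu(V_{r_n}\cap f^{-m}V_{r_n})-\mu(V_{r_n})^2$ over lags $m\le N_n$. To invoke (SPDC), I would approximate $1_{V_r}$ from above by a Lipschitz function $\chi_{r,\delta}$ supported in $V_{r+\delta}$, equal to $1$ on $V_r$ and with Lipschitz constant $L/\delta$, where $L$ is the Lipschitz constant of $g$. Splitting the lag at a cutoff $m^*$ (the trivial bound $\mu(V_{r_n})$ for $m\le m^*$ and the decay-of-correlations bound $(L/\delta)^2\Theta(m)$ for $m>m^*$), and choosing $\delta$ as a small power of $r_n$, the superpolynomial rate $\Theta$ makes the correlation tail negligible. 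This yields $\mathrm{Var}(Z_n)\lesssim \mathbb{E}[Z_n]$, hence $\mu(Z_n=0)\lesssim \mu(V_{r_n})^{\varepsilon}$, again summable along $r_n=q^n$. Borel--Cantelli then gives $\tau(x,V_{r_n})\le N_n$ eventually, so $\overline{H}(x,g)\le d_\mu(g)+O(\varepsilon)$.

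The main obstacle is the upper bound, and specifically the fact that the indicator $1_{V_r}$ is not Lipschitz, which forces a delicate balance between three parameters: the smoothing width $\delta$, the lag cutoff $m^*$, and the horizon $N_n$. One needs simultaneously that (i) the Lipschitz constant $L/\delta$ times the tail sum $\sum_{m>m^*}\Theta(m)$ is small compared with $\mu(V_{r_n})^2/N_n$, (ii) the enlarged set $V_{r_n+\delta}$ still has measure comparable to $\mu(V_{r_n})$ (which is where the assumption $d_\mu(g)<\infty$ together with the Lipschitz regularity of $g$ is used implicitly), and (iii) the contribution from short lags $m\le m^*$ does not overwhelm $\mathbb{E}[Z_n]$. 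Superpolynomial decay makes the three conditions compatible, but the careful bookkeeping in choosing $\delta$ and $m^*$ as appropriate powers of $r_n$ is the heart of the argument.
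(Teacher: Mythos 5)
The paper itself does not prove Proposition~\ref{prop.G}; it cites the result from \cite{G}, so there is no ``paper's own proof'' to compare against. Your overall architecture, however, is the standard and essentially correct route that the cited reference follows: a first-moment Borel--Cantelli argument along a geometric subsequence $r_n=q^n$ for the lower bound $\underline{H}\ge d_\mu(g)$, a second-moment (Chebyshev) Borel--Cantelli argument for the upper bound $\overline{H}\le d_\mu(g)$ with (SPDC) invoked through Lipschitz regularisations of the indicators $1_{V_r}$, and a monotonicity argument to pass from the geometric subsequence to all $r$.

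There is, however, a genuine gap in your variance estimate, and it sits exactly at your item (ii). You smooth the indicator from \emph{above}, replacing $1_{V_r}$ by a Lipschitz $\chi$ with $1_{V_r}\le\chi\le 1_{V_{r+\delta}}$, and then use $\mu(V_r\cap f^{-m}V_r)\le\int\chi\cdot\chi\circ f^m\,d\mu\le(\int\chi\,d\mu)^2+\lVert\chi\rVert^2\Theta(m)$. The contribution of the term $(\int\chi\,d\mu)^2-\mu(V_r)^2$ to $\mathrm{Var}(Z_n)/\mathbb{E}[Z_n]^2$ is, after summing over lags, of order $\mu(V_{r+\delta})^2/\mu(V_r)^2-1$, and your argument needs this to be summable along $r_n=q^n$. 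The hypothesis $\underline{d}_\mu=\overline{d}_\mu=d_\mu(g)<\infty$ only gives $\mu(V_r)=r^{d_\mu(g)+o(1)}$, which allows the ratio $\mu(V_{r+\delta})/\mu(V_r)$ (with $\delta\ll r$) to diverge at a subpolynomial rate; the Lipschitz regularity of $g$ gives the $L/\delta$ Lipschitz bound on $\chi$ but says nothing about the measure of the annulus $\{r<g\le r+\delta\}$. So item (ii) does not follow from the stated assumptions, and with upper smoothing the Chebyshev bound can fail to tend to $0$. The standard fix is to smooth from \emph{below}: take a Lipschitz $\chi^-$ with $1_{V_{r-\delta}}\le\chi^-\le 1_{V_r}$ and define the counting variable $Z_n^-=\sum_{k\le N_n}\chi^-(f^kx)$, so that $\{\tau(x,V_{r_n})>N_n\}\subset\{Z_n=0\}\subset\{Z_n^-=0\}$. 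Then (SPDC) applies directly to $\chi^-$ with no smoothing error in the covariance, and the only thing needed on the measure side is the one-sided inequality $\mu(V_{r-\delta})\ge\mu(V_r)^{1+\varepsilon/2}$ for large $n$ (with $\delta=r^\kappa$, $\kappa>1$), which \emph{does} follow from the convergence $\log\mu(V_r)/\log r\to d_\mu(g)$. With this replacement, the rest of your bookkeeping goes through.
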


We also use the following elementary fact about real sequences, whose proof
is omitted.

\begin{lemma}
\label{lemmino} Let $r_{n}$ be a decreasing sequence such that $%
r_{n}\rightarrow 0$. Suppose that there is a constant $c>0$ satisfying $%
r_{n+1}>cr_{n}$ eventually as $n$ increases. Let $\tau _{r} \colon \mathbb{R}
\rightarrow \mathbb{R}$ be decreasing. Then
\begin{equation*}
\liminf_{n\rightarrow \infty } \frac{\log \tau _{r_{n}}}{-\log r_{n}}
=\liminf_{r\rightarrow 0}\frac{\log \tau _{r}}{-\log r} \quad \text{and}
\quad \limsup_{n\rightarrow \infty }\frac{\log \tau _{r_{n}}}{-\log r_{n}}
=\limsup_{r\rightarrow 0}\frac{\log \tau _{r}}{-\log r}.
\end{equation*}
\end{lemma}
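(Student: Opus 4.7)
The argument is a standard sandwiching using the regularity assumption $r_{n+1} > cr_n$. The plan is as follows.

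First, since $\liminf_n$ (respectively $\limsup_n$) along the subsequence $(r_n)$ is always bounded below (respectively above) by the corresponding liminf (limsup) taken over the continuous parameter $r \to 0$, only one direction of each equality needs work. For the reverse direction, the plan is to locate each small $r$ between two consecutive terms of the sequence and then sandwich the ratio $\frac{\log \tau_r}{-\log r}$ by the corresponding ratios at $r_n$ and $r_{n+1}$.

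Concretely, I would proceed as follows. Since $(r_n)$ is monotone decreasing to $0$, for each sufficiently small $r>0$ there is a unique $n=n(r)$ with $r_{n+1} < r \leq r_n$, and $n(r)\to\infty$ as $r\to 0$. Since $\tau_{\,\cdot\,}$ is decreasing we obtain
\begin{equation*}
  \tau_{r_n} \;\leq\; \tau_r \;\leq\; \tau_{r_{n+1}},
  \qquad
  -\log r_n \;\leq\; -\log r \;<\; -\log r_{n+1}.
\end{equation*}
Taking logs (which we may do in the range where $\tau_r\to\infty$; the other case is trivial since the assertion reduces to $0=0$ or $-\infty=-\infty$), for sufficiently large $n$ both $\log\tau_{r_n}$ and $\log\tau_{r_{n+1}}$ are non-negative, and we get
\begin{equation*}
  \frac{\log \tau_{r_n}}{-\log r_{n+1}}
  \;\leq\;
  \frac{\log \tau_r}{-\log r}
  \;\leq\;
  \frac{\log \tau_{r_{n+1}}}{-\log r_n}.
\end{equation*}

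The crucial input is the hypothesis $r_{n+1}>cr_n$, which gives $-\log r_{n+1} < -\log r_n - \log c$. Combined with $-\log r_n \to \infty$, this yields
\begin{equation*}
  \lim_{n\to\infty}\frac{-\log r_{n+1}}{-\log r_n} \;=\; 1.
\end{equation*}
Hence the bracketing fractions have exactly the same liminf and limsup as $\frac{\log \tau_{r_n}}{-\log r_n}$, and the sandwich then forces equality of the two liminfs (respectively limsups). No real obstacle arises; the only point to watch is the sign convention for $\log\tau$, which is why we restrict to large enough $n$.
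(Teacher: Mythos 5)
Your proof is correct and is the standard sandwiching argument; the paper omits the proof precisely because of this. The key points — locating $r$ between $r_{n+1}$ and $r_n$, using monotonicity of $\tau$ to bracket the ratio, and invoking the hypothesis $r_{n+1}>cr_n$ to obtain $\frac{-\log r_{n+1}}{-\log r_n}\to 1$ — are exactly what is needed, and your handling of the sign caveat for $\log\tau$ (restricting to large $n$ where $\tau_{r_n}\geq 1$, with the bounded case giving $0=0$ trivially) correctly closes the one potential gap.
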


\begin{proof}[Proof of Proposition~\protect\ref{pro1}]
Consider $V_{r}=\{\,x\in X:\tilde{\phi}(x)\leq r\,\}$, where $\tilde{\phi}$
is related to $\phi $ (and hence the sets $A_{n}$) by (ii) of Definition~\ref{reg}.

By the definition of $V_{r}$ and the assumption on $\tilde{\phi}$, we
have $ V_{\mu (A_{n})^{\beta }}=A_{n}$. Hence,
Proposition~\ref{prop.G} and Lemma~\ref{lemmino} imply that
\begin{equation*}
  d_{\mu }(\tilde{\phi})=\lim_{r\rightarrow 0}\frac{\log \mu
    (V_{r})}{\log (r)} =\lim_{n\rightarrow \infty }\frac{\log \mu
    (V_{\mu (A_{n})^{\beta }})}{\log (\mu (A_{n})^{\beta
    })}=\underset{n\rightarrow \infty }{\lim }\frac{\log \mu
    (A_{n})}{\log (\mu (A_{n})^{\beta })}=\frac{1}{\beta }.
\end{equation*}

By Proposition~\ref{prop.G}, we know that
\begin{equation*}
\lim_{n\rightarrow \infty }\frac{\log (\tau (x,V_{\mu (A_{n})^{\beta }}))}{%
\log \mu (A_{n})^{\beta }}=d_{\mu }(\tilde{\phi})=\frac{1}{\beta}
\end{equation*}%
and hence
\begin{equation*}
\lim_{n\rightarrow \infty }\frac{\log (\tau (x,A_{n}))}{-\log \mu (A_{n})}=1.%
\qedhere
\end{equation*}
\end{proof}

We now complete the proof of Theorem~\ref{thm1}.

\begin{proof}[Proof of Theorem~\protect\ref{thm1}]
First we prove a lower bound to $S_{n}$. Note that the non-integrability
assumption on $\phi $ implies that $\alpha _{\phi }\leq 1$. Since $\phi $ is
non-negative, we have
\begin{equation*}
\lim_{n\rightarrow \infty }\frac{\log (S_{n}(x))}{\log n}\geq
\lim_{n\rightarrow \infty }\frac{\log (\max_{1\leq i\leq n}\phi (f^{i}(x)))}{\log n},
\end{equation*}
and hence from $\frac{\log (\tau (x,A_{n}))}{-\log \mu (A_{n})}\rightarrow 1$, we have
$\frac{\log (\tau (x,A_{n}))}{-\log \mu (A_{n})}\frac{-\log \mu (A_{n})}{
\log n}\rightarrow \alpha _{\phi }$. So $\forall \varepsilon \geq 0$, we
have eventually $n^{\alpha_{\phi} -\varepsilon }\leq \tau (x,A_{n})\leq n^{\alpha_{\phi}
+\varepsilon }$. Furthermore eventually with respect to $n$
\begin{align*}
\max_{1\leq i\leq n}\phi (f^{i}(x))& \geq \max (\{\,i:\tau (x,A_{i})\leq
n\,\}) \\
& \geq \max (\{\,i:i^{\alpha_{\phi} +\varepsilon }\leq n\,\}) \\
& \geq n^{\frac{1}{\alpha_{\phi} +\varepsilon }}-1.
\end{align*}

To get an upper bound on $S_{n}$, let us suppose $\alpha_{\phi} \neq 0$ and
$0<\varepsilon \leq \alpha_{\phi} $. Consider $a(x)=x^{\alpha_{\phi} -\varepsilon }$. Then
by the definition of $\alpha_{\phi} $, we have $\int a(\phi )\,d\mu <\infty $.
Proposition~\ref{aa} implies that $a(S_{n}(x))/n\rightarrow 0$ for almost
every $x$, and hence that
\begin{equation*}
\limsup_{n\rightarrow \infty }\frac{\log S_{n}(x)}{\log n}\leq
\frac{1}{\alpha_{\phi} -\varepsilon }.
\end{equation*}
Since $\varepsilon $ can be taken arbitrarily small, this finishes the proof.
\end{proof}

\section{Proofs of the statements of Sections~\protect\ref{extremes} and \ref{sec.max-hit}}
In this section, we give the proof of results in Section~\ref{extremes}, namely that of
Proposition~\protect\ref{start} on the logarithm law of the hitting time for infinite systems.
We also prove results stated in Section~\ref{sec.max-hit}, namely those that link the hitting time
function with the maxima function.

\subsection{Proofs of results in Sections~\protect\ref{s1}}

\label{sec:proofstart}

\begin{proof}[Proof of Proposition~\protect\ref{start}]
By Proposition~\ref{pro1}, for the induced system it holds that for $\mu_Y$-a.e.\ $x$
\begin{equation}
\lim_{n\rightarrow \infty }\frac{\log \tau (f_Y,x,B_{n})}{\log n}
=\alpha .  \label{sss}
\end{equation}

For the original map $f$ it holds
\begin{equation*}
\tau (f,x,B_{n})=\sum_{i=0}^{\tau (f_Y,x,B_{n})}R((f_Y)^{i}(x)).
\end{equation*}
Hence $\tau (f,x,B_n)$ is a Birkhoff sum of the observable $R$ on the system $(f_Y,Y,\mu_Y),$
applying Theorem~\ref{thm1} we get
\begin{equation*}
\lim_{n\rightarrow \infty }\frac{\log (\tau (f,x,B_{n}))}{\log [\tau
(f_Y,x,B_{n})]}=\frac{1}{\alpha _{R}}
\end{equation*}
from which applying \eqref{sss} we get the statement.
\end{proof}

\subsection{Proofs of results in Section~\ref{sec.max-hit}}\label{sec.proofmaxlaw}

\begin{proof}[Proof of Proposition~\ref{prop.maxhit}.]
First, we suppose there exist $\ell_1(n),\ell_2(n)$ as described
in the proposition for which
\[
\ell_1(n)\leq \tilde{M}_n\leq \ell_2(n),
\]
(eventually, for all large $n$). Since
$\tilde{M}_{n}(x)\leq \ell_2(n)$ implies
$\tau_{\ell_2(n)}(x)\geq n$, it follows that  $\tau_n(x)\geq\ell^{-1}_2(n).$ Now
fix $n\geq N$, and take $u\in[n,n+1]$. It follows that
$\tau_u(x)\geq\ell^{-1}_2(n)\geq\ell^{-1}_2(u-1),$ as $u\to\infty$. A
similar estimate is achieved for the upper bound leading to
$\tau_u(x)\leq \ell^{-1}_1(u+1).$ This proves the first item.

For the second item of the proposition, set
$u=\hat{\ell}^{-1}_1(n),\hat{\ell}^{-1}_2(n)$ accordingly. If
$n\to\infty$ then $u\to\infty$. Hence by using the basic observation
between maxima and hitting times it follows that
$\hat{\ell}^{-1}_2(n)\leq \tilde{M}_n\leq\hat{\ell}^{-1}_1(n)$ for all
$n$ sufficiently large. This completes the proof.
\end{proof}

\begin{proof}[Proof of Proposition~\ref{poi}]
We shall apply Proposition~\ref{prop.maxhit}.  As before, note that
$M_{n}(x)\leq u$ if and only if $\tau^{\phi}_{u}(x)\geq n$. Suppose
that $\limsup_{n\rightarrow \infty }\frac{\log M_{n}}{\log n}=a_1$,
and let $\varepsilon >0$. Then there exists an integer $N_1$ such that
for all $n\geq N_1$ we have $\frac{\log M_{n}}{\log n}\leq
a_1+\varepsilon$.  It follows that for all such $n$, $M_{n}\leq
\ell(n):=n^{a_1+\varepsilon}$. Hence, applying
Proposition~\ref{prop.maxhit}, we have for all sufficiently large $u$:
$\tau_u\geq\ell^{-1}(u-1)=(u-1)^{\frac{1}{a_1+\varepsilon}}.$ By
applying a similar estimate to get the lower bound we achieve (for all
sufficiently large $u$) that
\[
(u-1)^{\frac{1}{a_1+\varepsilon}}\leq\tau^{\phi}_u\leq
(u+1)^{\frac{1}{a_2+\varepsilon}}.
\]
Hence, taking logarithms we get
\[
\liminf_{u\to\infty}\frac{\log \tau_u}{\log u}\geq\frac{1}{a_1},\;\textrm{and}\;
\limsup_{u\to\infty}\frac{\log \tau_u}{\log u}\leq\frac{1}{a_2}.
\]

To get equality for the $\liminf$, we know that for all $\varepsilon>0$, we have $M_{n}\geq
n^{a_1-\varepsilon}$ infinitely often.  Hence $\tau _{u_n}\leq
(u_n)^{\frac{1}{a_1-\varepsilon }}$ infinitely often along the sequence
$u_n=n^{a_1-\varepsilon}$. Thus by taking logarithms we obtain $\lim
\inf_{u\rightarrow \infty }\frac{\log \tau^{\phi}_{u}}{\log u}\leq
\frac{1}{a_1}$.  This establishes the implication
\begin{equation*}
\limsup_{n\rightarrow \infty } \frac{\log [M_{n}(x)]}{\log n}=a_1\;
\implies\;   \liminf_{u\rightarrow \infty } \frac{\log
[\tau^{\phi}_{u}(x)]}{\log u}=\frac{1}{a_1}.
\end{equation*}
By a symmetric argument, we also establish
that for given $a_2>0$, and $x\in X$,
\begin{equation*}
  \liminf_{n\rightarrow \infty } \frac{\log [M_{n}(x)]}{\log n}=a_2
  \; \implies\; \limsup_{u\rightarrow \infty }
  \frac{\log [\tau^{\phi}_{u}(x)]}{\log u}=\frac{1}{a_2}.
\end{equation*}
The converse implications follow in a similar way following the proof
of Proposition~\ref{prop.maxhit}.
Hence in the case $a_1=a_2$, and when either limit exists,
we obtain the final limit statement in Proposition~\ref{poi}.
\end{proof}

\begin{proof}[Proof of Corollary~\ref{t2}]
  Consider the observable $\phi $ and the hitting time scaling
  behaviour of suplevels $B_{n}=\{\,x\in X:\phi (x)\geq
  n\,\}$. Restricting to countably many radii and considering that
  $\tau^{\phi}_{u}(x)$ is increasing in $u$,
  \begin{equation*}
    \lim_{u\rightarrow \infty }\frac{\log [\tau^{\phi}_{u}(x)]}{\log u}
    =\lim_{n\rightarrow \infty }\frac{\log [\tau (f,x,B_{n})]}{\log
      n}.
  \end{equation*}
  By Proposition~\ref{start} we then get that
  \[
  \lim_{r\rightarrow
    \infty }\frac{\log [\tau^{\phi}_{u}(x)]}{\log u}=\lim_{n\rightarrow
    \infty }\frac{\log [\tau (f,x,B_{n})]}{\log n}=\frac{\alpha _{\phi
  }}{\alpha _{R}}
  \]
  holds for $\mu$-a.e.\ $x\in X$. Applying
  Proposition~\ref{poi} we directly get the statement.
\end{proof}

\begin{proof}[Proof of Lemma~\ref{lem.tent}.]
From Proposition~\ref{prop.maxhit} it suffices to estimate an expression for the asymptotic inverse function of
\[
g(x):=\log x+c\log\log x,\quad c>0.
\]
Consider the function $\tilde{g}(x)=x^{-a}e^{x}$, for some $a>0$. If we compute $(g\circ \tilde{g})(x)$, we obtain
\[
g(\tilde{g}(x))=x-a\log x+c\log x+c\log\left(1-\frac{a\log x}{x}\right).
\]
If $a>c$, then $g(\tilde{g}(x))<x$, as $x\to\infty$, and similarly if $a<c$, then $g(\tilde{g}(x))>x$, as $x\to\infty$.
Hence for all $\varepsilon>0$, and all sufficiently large $x$, we have
\[
\frac{e^x}{x^{c+\varepsilon}}\leq g^{-1}(x)\leq \frac{e^x}{x^{c-\varepsilon}}.
\]
Applying Proposition~\ref{prop.maxhit}, and then taking logarithms gives the result.
\end{proof}

\section{Proof of Theorem~\ref{thm:infiniteSBC} and Proposition~\ref{prop:SnR}.} \label{sec:proofinfiniteSBC}
In this section we prove Theorem~\ref{thm:infiniteSBC} on SBC results for infinite systems.
We begin with a proof of Proposition~\ref{prop:SnR}.

\begin{proof}[Proof of Proposition~\ref{prop:SnR}.]

 Recall that $\phi$ is such that $\mu\{\phi(x)=n\}\sim n^{-\beta-1}$.
  For the proof of the upper bound let $a(t)=t^{\beta }(\log
  t)^{-1-\varepsilon }$. Then
  \begin{equation*}
    \int a(\phi )\,\mathrm{d}\mu <\infty ,
  \end{equation*}
  since
  \begin{align*}
    \int a(\phi )\,\mathrm{d}\mu & =\sum_{k=1}^{\infty }a(k)\mu
    \{\,x:\phi (x)=k\,\} \\ & \sim \sum_{k=1}^{\infty }a(k)k^{-\beta
      -1}=\sum_{k=1}^{\infty }\frac{1}{ k(\log k)^{1+\varepsilon
    }}<\infty .
  \end{align*}

  By Proposition~\ref{aa}, we have for almost all $x$ that
  \begin{equation*}
    \frac{a(S_{n}(x))}{n}\rightarrow 0.
  \end{equation*}
  In particular $a(S_{n})<n$ if $n$ is large. Then almost surely,
  \begin{equation*}
    (S_n)^{\beta}(\log S_n)^{-1-\varepsilon }<n.
  \end{equation*}
  By asymptotic inversion, we therefore have for almost all $x$ that
  \begin{equation*}
    S_{n}(x)\leq Cn^{1/\beta}(\log n)^{1/\beta +
      \varepsilon/\beta }
  \end{equation*}
  for all large $n$. As $\varepsilon $ is arbitrary, we may take $C=1$
  and replace $\alpha \varepsilon $ by $\varepsilon $, which proves
  the upper bound.

  For the proof of the lower bound we will use that $(f,X,\mu )$ is a
  Gibbs--Markov map satisfying assumptions~\ref{G-M1}--\ref{G-M6}. We
  will use the following lemma, which we prove in the Appendix.

  \begin{lemma}
    \label{slem.gamma}
    Assume that \ref{G-M1}--\ref{G-M6} hold. Suppose
    $\gamma_n\to\infty$ is a monotone sequence. Let
    \begin{equation*}
      P_n:= \mu \{\, x : \phi (f^j(x)) < \gamma_n \text{ for all } j <
      n \,\}.
    \end{equation*}
    Then there exists $D_0, D_1 > 0$ such that
    \[
      P_n \leq D_1 \left(1 - D_0 \gamma^{-\beta}_n\right)^n.
    \]
  \end{lemma}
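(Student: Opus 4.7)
Write $A := \{\,\phi \geq \gamma_n\,\}$ and $B := A^c$, so that $P_n = \mu\bigl(\bigcap_{j=0}^{n-1}f^{-j}(B)\bigr)$. Assumption \ref{G-M5} forces $\phi$ to be constant on each element of $\mathcal{P}$, hence both $A$ and $B$ are unions of elements of $\mathcal{P}$; assumption \ref{G-M6} yields $\mu(A) = \sum_{k\geq\gamma_n}\mu\{\,\phi=k\,\} \sim \gamma_n^{-\beta}$. It therefore suffices to establish a quasi-geometric decay of the form $P_n\leq D_1(1-\kappa\,\mu(A))^n$ for some universal $\kappa > 0$, after which the implicit constant from \ref{G-M6} can be absorbed into $D_0$.

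The central step is the following standard Gibbs--Markov quasi-independence estimate: there exist $\kappa > 0$ and $m_0\in\mathbb{N}$, depending only on the data in \ref{G-M1}--\ref{G-M4}, such that for every $\mathcal{P}_{k-1}$-measurable set $E$,
\[
\mu\bigl(E\cap f^{-(k+m_0-1)}(A)\bigr) \geq \kappa\,\mu(E)\,\mu(A).
\]
To prove this, fix an element $\omega\in\mathcal{P}_{k-1}$. By the bounded distortion estimate \ref{G-M3}, the normalised measure of $f^{-(k+m_0-1)}(A)\cap\omega$ inside $\omega$ is comparable to
\[
\frac{\mu\bigl(A\cap f^{k+m_0-1}(\omega)\bigr)}{\mu\bigl(f^{k+m_0-1}(\omega)\bigr)}
\]
up to a universal multiplicative constant. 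By the Markov structure and big-image condition in \ref{G-M1} iterated $m_0$ times, and by the uniform expansion \ref{G-M2}, the image $f^{k+m_0-1}(\omega)$ contains a union of elements of $\mathcal{P}$ of measure uniformly bounded below and, for $m_0$ large enough, exhausts the support of $A$ up to a fixed universal proportion. Summing the local estimates over the atoms $\omega\subseteq E$ of $\mathcal{P}_{k-1}$ then produces the desired inequality. Applying this iteratively with $E_j := \bigcap_{i=0}^{jm_0-1}f^{-i}(B)$, which is $\mathcal{P}_{jm_0-1}$-measurable since $B$ is a union of elements of $\mathcal{P}$, yields $\mu(E_{j+1}) \leq (1-\kappa\,\mu(A))\,\mu(E_j)$. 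After $\lfloor n/m_0\rfloor$ iterations one obtains $P_n \leq (1-\kappa\,\mu(A))^{\lfloor n/m_0\rfloor}$, and absorbing $m_0$ together with the implicit constant from \ref{G-M6} into fresh constants gives the claimed bound.

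The main obstacle is the verification of the quasi-independence estimate, which hinges on the big-image condition $|f(X_i)|>c_0$ in \ref{G-M1}: this condition only guarantees that $f(X_i)$ \emph{contains} a union of partition elements, not that $f(X_i) = X$. Consequently a single iterate of a cylinder $\omega$ need not cover $A$, and one is forced to introduce the buffer $m_0$ so that, after iteration of the Markov property and the uniform expansion, $f^{k+m_0-1}(\omega)$ spreads across sufficiently many partition elements to capture a definite fraction of $\mu(A)$. Once the buffer is in place, the remainder of the argument is an elementary iteration.
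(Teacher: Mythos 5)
Your proof takes a genuinely different route from the paper's, and it has a real gap at precisely the step you flag as ``the main obstacle.''

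The paper's argument is purely an \emph{upper-bound} computation on cylinder measures. Working with Lebesgue measure (and absorbing the density bounds from \ref{G-M4}), it uses the bounded distortion estimate \ref{G-M3} to obtain, for each cylinder of depth $k$, an inequality of the form $|X_{i_0,\ldots,i_{k-1}}| \leq e^{C\tau^k}\,|X_{i_0,\ldots,i_{k-2}}|\cdot|X_{i_{k-1}}|$, and then \emph{sums} this over all cylinders whose symbols all lie in $B=\{\phi<\gamma_n\}$. The sum decouples into $Q_{n,k}\leq e^{C\tau^k}\,Q_{n,k-1}\,Q_{n,1}$, and since $Q_{n,1}=|B|\leq 1-D_0\gamma_n^{-\beta}$ (from \ref{G-M6}) and $\prod_k e^{C\tau^k}=e^{C/(1-\tau)}$ is a finite constant, induction finishes the proof. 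Crucially, no cylinder needs to have positive measure for the inequality to hold -- inadmissible cylinders contribute $0$ to the left side and something nonnegative to the right, which only helps. There is no mixing, no transfer operator, no reachability argument.

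Your argument, by contrast, hinges on a \emph{lower-bound} estimate: $\mu\bigl(E\cap f^{-(k+m_0-1)}(A)\bigr)\geq\kappa\,\mu(E)\,\mu(A)$, i.e.\ quasi-independence. This is where the gap is. To obtain such a lower bound via bounded distortion, you need $f^{k+m_0-1}(\omega)$ to actually \emph{reach} a definite fraction of $A$ for every cylinder $\omega\in\mathcal{P}_{k-1}$, and you assert that this follows for some fixed $m_0$ ``by the Markov structure and big-image condition.'' But it does not: the big-image condition only guarantees $|f^{m}(\omega)|\geq c_0$, not that the image sweeps over the particular partition elements $\{X_i : \phi(X_i)\geq\gamma_n\}$ that constitute $A$. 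These elements depend on $n$ and drift out into the tail of the countable alphabet as $\gamma_n\to\infty$, so a buffer $m_0$ that works for one $n$ need not work for larger $n$. Making your quasi-independence uniform over all $\gamma_n$ requires either the full-branch condition $f(X_i)=X$ (in which case $m_0=1$ suffices and the estimate is immediate) or an explicit mixing/aperiodicity hypothesis on the Markov structure together with a spectral or coupling argument -- none of which are supplied, and neither is implied by \ref{G-M1}--\ref{G-M4} as stated. You name the obstacle but do not close it, so the proof is incomplete. The lesson of the paper's argument is that this obstacle can be avoided altogether by never asking for a lower bound: the entire inductive bound is built from one-sided (upper) estimates on cylinder lengths, where admissibility and reachability simply never arise.
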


  Using Lemma~\ref{slem.gamma}, we now put $\gamma_n=
  n^{\frac{1}{\beta}} (\log n)^{-\frac{1}{\beta}-\varepsilon}$. It
  follows that
  \begin{equation*}
    P_n\leq D_1 (1 - D_0 n^{-1} (\log n)^{1+\varepsilon\beta} )^n,
  \end{equation*}
  For a sequence $a_n$ such that $na_n\to 0$, an elementary estimate
  gives
  \begin{equation*}
    (1-a_n)^n=\exp\{n\log(1-a_n)\}\leq\exp\{-a_nn\}.
  \end{equation*}
  In the case $a_n=D_0\gamma^{\beta}_n$ we obtain
  \begin{equation*}
    P_n < D_1 \exp(-D_0(\log n)^{1+\frac{\varepsilon}{\alpha}})= O(n^{-2}),
  \end{equation*}
  for large $n$. Since $P_n$ is summable, it follows by the First
  Borel--Cantelli Lemma that if $n$ is large, there is always a $j <
  n$ with
  \begin{equation*}
    \phi (f^j(x)) > n^{\frac{1}{\beta}} (\log n)^{-\frac{1}{\beta} -
      \varepsilon}.
  \end{equation*}
  This proves the lower bound for $\max \{\, \phi (f^j(x)) : 0 \leq j
  < n \,\}$, concluding the proof of Proposition~\ref{prop:SnR}.
\end{proof}

We now prove Theorem~\ref{thm:infiniteSBC}.

\begin{proof}[Proof of Theorem~\ref{thm:infiniteSBC}.]
  The proof consists of the following steps. First we obtain an almost
  sure asymptotic between the inducing time $n$, and the clock time
  $S^{R}_n(x)=\sum_{i=1}^{n}R_i(x)$, in the limit $n\to\infty$.  For
  systems preserving an infinite measure, the asymptotics of the
  return function $R(x)$ are important in this step.  In the second
  step, we then use the strong Borel--Cantelli property of the induced
  system to get quantitative bounds on the recurrence statistics,
  namely equation \eqref{eq.quant}.

  Let
  \[
  Q(n,x) = S_n (R)(x) = \sum_{i=0}^{n-1} R(f^{i}_{Y}(x)).
  \]
  Then $f^{Q(n,x)}(x)=f^{n}_{Y}(x)$.  We begin by using
  Proposition~\ref{prop:SnR}, which tells us that
  \begin{equation} \label{eq:Qupperbound}
  Q (n,x) \leq n^{\frac{1}{\beta}} (\log
  n)^{\frac{1}{\beta} + \varepsilon},
  \end{equation}
  and
  \[
  n^{\frac{1}{\beta}}(\log n)^{-\frac{1}{\beta}-\varepsilon} \leq \max
  \{\, R (f^j(x)) : 0 \leq j < n \,\} < Q (n,x)
  \]
  for all large $n$.

  We consider the first item of the theorem. Let
  $q_n=p_{[n^{\alpha+\varepsilon}]}$ with $\varepsilon \in (0,
  \varepsilon_1]$. Since $\varepsilon \leq \varepsilon_1$ we have
  $\sum_n \mu (q_n) = \infty$, hence $\sum_{k} \mu(p_{n_k}) = \infty$,
  and so $\{p_{n_k}\}$ forms a strong Borel--Cantelli sequence with
  respect to $f_Y$. We obtain
  \[
  \lim_{N\to\infty}
  \frac{\sum_{k=1}^{N}q_n(f_{Y}^k(x))}{\sum_{k=1}^{N}\mu(q_n)} = 1,
  \qquad \text{for $\mu_Y$-almost every } x \in Y.
  \]
  Consider $\ell$ such that $Q(n,x)\leq\ell< Q(n+1,x)$. By
  \eqref{eq:Qupperbound}, there are infinitely many $n$ such that
  $\ell$ satisfies
  \begin{equation} \label{eq:lbounds}
    n^{\frac{1}{\beta}}(\log n)^{-\frac{1}{\beta} - \varepsilon} <
    \ell < (n+1)^{\frac{1}{\beta}}(\log (n + 1))^{\frac{1}{\beta} +
      \varepsilon}.
  \end{equation}
  We remark that in some cases, when $Y$ is a Darling--Kac set, then
  we get an improved lower bound on $\ell$
  \cite[Theorem~4]{AaronsonDenker}, which in turn leads to an improved
  upper bound in \eqref{eq.quant} of Theorem~\ref{thm:infiniteSBC}.

  By monotonicity of $p_n$, and noting that $p_k(f^k(x))=0$ when
  $k\neq Q(n,x)$, we have
  \begin{equation} \label{eq:ergodicsums}
    \sum_{k=1}^{n}q_k(f_{Y}^k(x)) \leq \sum_{k=1}^{\ell} p_k (f^k(x))
    + \sum_{k=1}^{N_0}p_1(f_Y^k (x)),
  \end{equation}
	for some $N_0=N_0(x)$.
  By a rearrangement and division by $\sum_{k=1}^{n}\mu(q_k)$, we
  obtain
  \[
    \frac{\sum_{k=1}^{\ell}p_k(f^k(x))} {\sum_{k=1}^{n}\mu(q_k)} \geq
    \frac{ \sum_{k=1}^{n} q_k(f_{Y}^k(x))-\sum_{k=1}^{N_0} p_1 (f_Y^k
      (x)) } { \sum_{k=1}^{n}\mu(q_k) }.
  \]
  As $n\to\infty$ (and $q\to\infty$) the right-hand bracket is
  $1+o(1)$ due the strong Borel--Cantelli property of the sequence
  $q_k$ with respect to $f_Y$.  Using the bounds on $\ell$ in
  \eqref{eq:lbounds}, and the monotonicity of the sequence $\{p_n\}$
  we obtain, for infinitely many $n$ that
  \[
    \sum_{k=1}^{\ell}p_k(f^k(x))=(1+o(1))\sum_{k=1}^{n}\mu(q_k)\geq
    \sum_{k=1}^{\ell^{\frac{1}{\alpha+\varepsilon}}}
    \mu(p_{k^{\alpha+\varepsilon}}).
  \]
  This leads to the conclusion that
  \begin{equation*}
    \liminf_{n\to\infty}\frac{\sum_{k=1}^{n} p_k (f^k
      (x))}{\sum_{k=1}^{n^{\frac{1}{\alpha+\varepsilon}}}
      \mu(p_{k^{\alpha+\varepsilon}})}\geq 1
  \end{equation*}
  holds for $\mu_Y$-almost every $x \in Y$. Clearly, this estimate
  then also holds for $\mu$-almost every $x \in X$, since $\mu$-almost
  every $x$ has $f^k (x) \in Y$ for some $k$ and for all $A\subset X$
  \[
  \mu (A) = \sum_{n=0}^\infty \mu_Y ((f^{-n}A) \cap \{R > n\})).
  \]

  To get an upper bound, similar to \eqref{eq:ergodicsums}, we write
  \[
  \sum_{k=1}^\ell p_k (f^k (x)) = \sum_{j=1}^n p_{Q(j,x)} (f^{Q(j,x)}
  (x)) = \sum_{j=1}^n p_{Q(j,x)} (f_Y^j (x)).
  \]
  Hence
  \[
  \frac{\sum_{k=1}^{\ell} p_k (f_k (x))}{(\sum_{k=1}^{n} \mu (q_k))} =
  \frac{\sum_{j=1}^n p_{Q(j,x)} (f_Y^j (x))}{\sum_{j=1}^n \mu
    (p_{Q(j,x)})}.
  \]
  By the strong Borel--Cantelli property for $(f_Y,Y,\mu_Y)$, we have
  \[
  \frac{\sum_{j=1}^n p_{Q(j,x)} (f_Y^j (x))}{\sum_{j=1}^n \mu
    (p_{Q(j,x)})} \to 1
  \]
  for almost every $x$.  Using again the bounds on $\ell$ in
  \eqref{eq:lbounds}, and the monotonicity of the sequence $\{p_n\}$,
  we obtain, for large enough $n$ that
  \begin{equation}\label{eq.bcestqk}
    \sum_{k=1}^{n}\mu(q_k) \leq
    \sum_{k=1}^{\ell^{\frac{1}{\alpha-\varepsilon}}}
    \mu(p_{k^{\alpha-\varepsilon}}).
  \end{equation}
  This leads to the estimate that
  \begin{equation*}
    \limsup_{n\to\infty}\frac{\sum_{k=1}^{n} p_k (f^k
      (x))}{\sum_{k=1}^{n^{\frac{1}{\alpha-\varepsilon}}}
      \mu(p_{k^{\alpha-\varepsilon}})}\leq 1
  \end{equation*}
  holds for $\mu_Y$-almost every $x \in Y$, and therefore also for
  $\mu$-almost every $x \in X$. This proves the first item.
	
  To prove the second item of Theorem~\ref{thm:infiniteSBC} we repeat
  the estimates above. This time we use the First Borel--Cantelli
  Lemma to deduce first of all that if $\sum_k \mu (q_k) < \infty$
  then $\sum_{k=1}^{n} q_k (f^{k}_Y (x)) < \infty$.  Using equation
  \eqref{eq.bcestqk}, and for all $\varepsilon>0$ we obtain the
  eventual bound (in $n$),
  \begin{equation*}
    \sum_{k=1}^{n} p_k (f^k(x))\leq
    \sum_{k=1}^{n^{\frac{1}{\alpha-\varepsilon}}}
    \mu(p_{k^{\alpha-\varepsilon}}).
  \end{equation*}	
  However by the assumption of item (2), the right hand sum is
  uniformly bounded, and hence the First Borel--Cantelli Lemma implies
  that for $\mu$-a.e.\ $x\in X$
  \[
  \sum_{k=1}^{n} p_k (f^k(x))<\infty.
  \]
\end{proof}

\section{Proof of limit laws for maxima and hitting times}\label{sec.bcmaxhit-proof}

Regarding the almost sure growth of $M_n$ for infinite systems, in
this section we prove Theorem~\ref{thm:infinitemaxgibbs}. The main
idea is to use directly Proposition~\ref{prop:SnR}, and the structure
of the induced system $(f_Y,Y,\mu_Y)$.

\begin{proof}[Proof of Theorem~\ref{thm:infinitemaxgibbs}.]
  We use Proposition~\ref{prop:SnR}, and note that $f_Y$ and the
  return time function $R \colon Y\to\mathbb{N}$ satisfy the
  assumptions~\ref{G-M1}--\ref{G-M6} (i.e.\ with $\phi$ in place of
  $R$ in \ref{G-M6}).  We get that for all $\varepsilon > 0$, and
  $\mu_Y$-almost all $x\in Y$ there is an $n_0$ such that
  \[
  n^{\frac{1}{\beta}}(\log n)^{-\frac{1}{\beta}-\varepsilon} \leq \max
  \{\, R_j (x) : 0 \leq j < n \,\} < \sum_{j=0}^n R_j (x) \leq
  n^{\frac{1}{\beta}} (\log n)^{\frac{1}{\beta} + \varepsilon}
  \]
  holds for all $n > n_0$. Now, let $\psi \colon
  (0,\infty)\to[0,\infty)$ be a decreasing function, and put
  \[
  M_n (x) = \max \{\, \psi (d (f^j x, \tilde{x})) : 0 \leq j < n \,\}.
  \]
  Suppose now that $n$ is fixed. In the case $\tilde{x}\in Y$, we have
  $M_n(x) = \hat{M}_k(x)$, where
  \[
  \hat{M}_k(x):=\max_{j\leq k(x)}\psi(d(f^{j}_{Y}(x),\tilde{x})),
  \]
  and $k(x)$ is the largest such $k$ for which $n \geq
  \sum_{j=0}^{k-1} R_j (x)$.  By above, we have for $\mu_Y$-almost all
  $x$ that
  \[
  k^\alpha (\log k)^{-\alpha - \varepsilon} \leq \max\{ R_0 , \ldots,
  R_{k-1} \} < n \leq k^\alpha (\log k)^{\alpha + \varepsilon}
  \]
  when both $n$ and $k = k(x)$ are large. This implies that
  \[
  n^{1/\alpha} (\log k)^{-1 - \frac{\varepsilon}{\alpha}} \leq k \leq
  n^{1/\alpha} (\log k)^{1 + \frac{\varepsilon}{\alpha}},
  \]
  and using that $k \leq n$, we obtain
  \begin{equation} \label{eq:n-k-relation}
  n^{1/\alpha} (\log n)^{-1 - \frac{\varepsilon}{\alpha}} \leq k \leq
  n^{1/\alpha} (\log n)^{1 + \frac{\varepsilon}{\alpha}}.
  \end{equation}

  Since the system $(f_Y,Y,\mu_Y)$ has exponential decay of
  correlations, we can use \cite[Proposition~3.4]{HNT} to get refined
  bounds on the almost sure growth of the maximum function
  $\hat{M}_k(x)$. That is,
  \[
  \psi \biggl( \frac{(\log k)^3}{k} \biggr) \leq \hat{M}_k(x) \leq
  \psi \biggl( \frac{1}{k (\log k)^{1+\varepsilon}} \biggr),
  \]
  for $\mu_Y$-a.e.\ $x\in Y$, and for all $\varepsilon>0$. Combining this with
  \eqref{eq:n-k-relation} and using that $M_n(x) = \hat{M}_k(x)$, we
  obtain
  \[
  \psi \biggl(\frac{(\log n)^{4 + \varepsilon}}{n^{1/\alpha}} \biggr)
  \leq M_n(x) \leq \psi \biggl(\frac{1}{n^{1/\alpha} (\log n)^{2 +
      \varepsilon}} \biggr). \qedhere
  \]
  These bounds pass on to $\mu$-a.e.\ $x\in X$, since $\mu$-a.e.\ $x$
  has $f^k(x)\in Y$ for some $k$.
\end{proof}

We are now readily to prove Theorem~\ref{thm:maximaint} for the
explicit family of intermittent maps $(f_{\alpha},X,\mu)$.

\begin{proof}[Proof of Theorem~\ref{thm:maximaint}]
To prove this result, we consider three cases: $\tilde{x}\in Y$,
$\tilde{x}\in(0,1/2)$, and also $\tilde{x}=0$. In the case
$\tilde{x}\in Y$, we notice that contributions to successive maxima
only occur once orbits return to $Y$, and hence the
statistics of the induced system $(f_Y,Y,\mu_Y)$ apply to obtain growth
rates for $M_n$. In the case $\tilde{x}\not\in Y\cup\{0\}$, we show
that the inducing set $Y$ can be enlarged to a new set
$\tilde{Y}\supset Y$, with $\tilde{x}\in\tilde{Y}$, and that the
corresponding induced system satisfies \ref{G-M1}--\ref{G-M6}. In the case
$\tilde{x}=0$, we use explicit tracking of the orbits outside of $Y$
to deduce the growth rate of $M_n$.

Consider first the case  $\tilde{x}\in Y$. Here, we just apply
Theorem~\ref{thm:infinitemaxgibbs} directly to
this system, and obtain immediately
  \[
  \psi \biggl(\frac{(\log n)^{4 + \varepsilon}}{n^{1/\alpha}} \biggr)
  \leq M_n(x) \leq \psi \biggl(\frac{1}{n^{1/\alpha} (\log n)^{2 +
      \varepsilon}} \biggr),
  \]
for $\mu$-a.e.\ $x\in [0,1]$, and $\tilde{x} \in Y$.

 So suppose now that $ \tilde{x}\in(0,1/2)$. We now enlarge the inducing set
  $Y$ to the set
  \[
  \tilde{Y}=Y\cup\left(\bigcup_{j=1}^{m}[x_j,x_{j-1}]\right),
  \]
  with $m$ the smallest integer so that $\tilde{x}$ lies in the
  interior of $\tilde{Y}$. Recalling that $x_n\sim (\alpha n)^{-1/\alpha}$,
let $W_{j}=[x_{j},x_{j-1}]$, and define a
  new (first) return time function $\tilde{R}$ via
  $\tilde{R}|_{Y_i}=1$ with $i\leq m$, $\tilde{R}|_{Y_i}=i-m$ for
  $i>m$, and $\tilde{R}\mid_{W_i}=1$.  The corresponding induced map
  $\tilde{f}(x)=f^{\tilde{R}}(x)$ satisfies \ref{G-M1}--\ref{G-M6}.  For any $x$
  and $j$, we have
  \[
  \tilde{R}_j (x) \leq R_j (x) \leq \tilde{R}_j (x) + m,
  \]
  where, as before, $R_j (x)$ denotes a return time with respect to
  $[1/2, 1]$. Hence, Lemma~\ref{prop:SnR} holds for $\tilde{R}$ as
  well (this also follows by Hopf's ergodic theorem), and this lets us
  prove the result in the same way as for the case $\tilde{x} \in
  [1/2,1]$.

In the case $\tilde{x}=0$, we have
  \[
  M_n (x)= \max \{\, \psi (x_{R_j (x)}) : 0 \leq j < k \,\},
  \]
 where $k=k(x)$ is the largest such $k$ for which $n \geq \sum_{j=0}^{k-1} R_j (x)$.
We can now follow step by step the proof of Theorem~\ref{thm:infinitemaxgibbs},
namely equation \eqref{eq:n-k-relation}
to deduce the relevant bounds on $M_n$ as stated in Theorem~\ref{thm:maximaint}.
\end{proof}

\section{Dynamical run length problems---proof of Theorem~\ref{thm_runlength}} \label{sec:runlength}

In this section, we prove Theorem~\ref{thm_runlength} for the
family of intermittent maps $(f_{\alpha},X,\mu)$. There is a natural
link between hitting times and the run length function as we now make
concrete.  Namely, consider a target point $\tilde{x}$ with a target
ball $B_{\varepsilon}(\tilde{x})$, and recall that the hitting time of
a point $x \in S^{1}$ is defined by
\begin{equation*} 
  \tau_{\varepsilon}(x,\tilde{x}) = \min \{\, n\geq 1: f_\alpha^n (x) \in
  B_{\varepsilon}(\tilde{x}) \,\}.
\end{equation*}
Meanwhile, let $T(x)=2x\mod 1$ on $S^{1}$, and for every $x\in S^{1}$,
denote $x=\sum_{i=1}^{\infty}\frac{x_{i}}{2^{i}}$ with $x_{i}=0$
(resp.\ 1) if and only of $T^{i-1}(x)\in[0,1/2)$
  (resp.\ $[1/2,1)$). Then we define the \emph{binary symbolic coding
      distance}
\begin{equation*} 
  \tilde{d}(x,y)=2^{-n^{*}(x,y)},~~\forall x,y\in S^{1},
\end{equation*}
where $n^{*}(x,y):=\min\{i\in\mathbb{N},~~x_{i}\neq y_{i}\}$.

With these conventions, we commence with the following lemma.
\begin{lemma}\label{Lem_hittingandrunlength}
  For every $x\in S^{1}$ and every $n\in\mathbb{N}$, we have
    \begin{enumerate}
    \item[(i)] $\displaystyle 2^{-\xi^{(1)}_{n}(x)} = \min_{1\leq i\leq
      n} \max \{ \tilde{d}(f_\alpha^i (x),1), 2^{-(n-i)} \}$;
    \item[(ii)] $\displaystyle \min_{1\leq i\leq \tau_{2^{-n}} (x,1)}
      \tilde{d}(f_\alpha^{i}(x), 1) \leq 2^{-n}$;
    \item[(iii)] $\displaystyle \min_{1\leq i\leq \tau_{2^{-n}} (x,1) -
      1} \tilde{d}(f_\alpha^{i}(x), 1) \geq 2^{-n}$,
    \end{enumerate}
    where $\tilde{d}(\cdot,\cdot)$ is the (binary) symbolic coding distance.
\end{lemma}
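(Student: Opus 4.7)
The plan is to reduce all three items to the single combinatorial fact that, for each $i$ with $f_\alpha^i(x) \in Y := [1/2, 1)$, the symbolic distance $\tilde{d}(f_\alpha^i(x), 1)$ records precisely the length of the maximal initial run of $1$'s in the sequence $\varepsilon_{i+1}(x), \varepsilon_{i+2}(x), \ldots$. First I would observe that on $Y$ the map $f_\alpha$ coincides with the doubling map $T$; therefore, as long as $f_\alpha^i(x), f_\alpha^{i+1}(x), \ldots, f_\alpha^{i+m-1}(x)$ all lie in $Y$, one has $f_\alpha^{i+j}(x) = T^j(f_\alpha^i(x))$ for $0 \leq j \leq m-1$. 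Consequently, the leading digits of the $T$-coding of $f_\alpha^i(x)$ are in bijection with the consecutive $f_\alpha$-digits $\varepsilon_{i+1}(x), \varepsilon_{i+2}(x), \ldots$, up to (and including) the first index at which the orbit leaves $Y$. Writing $R_i(x)$ for the length of the initial run of $1$'s in that $\varepsilon$-sequence, this yields the key identity
\[
\tilde{d}(f_\alpha^i(x), 1) = 2^{-R_i(x)}.
\]

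To prove (i), I would unpack the definition of $\xi_n^{(1)}(x)$ in the form
\[
\xi_n^{(1)}(x) = \max_{i}\, \min(R_i(x), n - i),
\]
the maximum being over indices $i$ for which a run starting there fits into $\{1, \ldots, n\}$. Substituting the identity $R_i(x) = -\log_2 \tilde{d}(f_\alpha^i(x), 1)$ and rewriting $\min(\,\cdot\,, n - i)$ as the negative logarithm of a maximum gives exactly the claimed formula (modulo the trivial index shift implicit in the range of $i$). Equivalently, $\xi_n^{(1)}(x) \geq k$ if and only if some $i$ satisfies both $\tilde{d}(f_\alpha^i(x), 1) \leq 2^{-k}$ and $2^{-(n-i)} \leq 2^{-k}$, which is precisely the combinatorial content of the $\min$-$\max$ expression.

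For (ii) and (iii), I would argue directly from the definition of the hitting time. Near the point $1$, the Euclidean ball $B_{2^{-n}}(1)$ agrees with the symbolic sublevel set $\{\, y : \tilde{d}(y, 1) \leq 2^{-n} \,\} = [1-2^{-n},1)$, so $f_\alpha^i(x) \in B_{2^{-n}}(1)$ is equivalent to $\tilde{d}(f_\alpha^i(x), 1) \leq 2^{-n}$. Writing $\tau := \tau_{2^{-n}}(x, 1)$, minimality of $\tau$ gives $\tilde{d}(f_\alpha^{\tau}(x), 1) \leq 2^{-n}$, which implies (ii); and for every $i < \tau$ the iterate $f_\alpha^i(x)$ lies outside $B_{2^{-n}}(1)$, giving $\tilde{d}(f_\alpha^i(x), 1) \geq 2^{-n}$ and hence (iii).

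The main obstacle, which is bookkeeping rather than a serious difficulty, is careful index management between the starting position of a run in the $\varepsilon$-coding, the iterate of $f_\alpha$ under consideration, and the $T$-symbolic distance to the point $1$. Once the identification $\tilde{d}(f_\alpha^i(x), 1) = 2^{-R_i(x)}$ is established cleanly, the remaining translation between the $\xi$-language and the $\min$-$\max$ expression is a routine algebraic manipulation.
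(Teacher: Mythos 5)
Your approach is exactly what the paper intends — the paper's own proof of this lemma consists of the single sentence that it follows directly from the definitions — and the reduction via the observation that $f_\alpha = T$ on $Y$, together with the resulting dictionary between the $\varepsilon$-coding and the $T$-symbolic distance to $1$, is the right way to fill in the argument. Two bookkeeping caveats, in line with the ``index management'' issue you yourself flag. First, with the paper's conventions ($\varepsilon_k$ governed by $f^{k-1}(x)$, digits indexed from $1$, and $n^*(x,y)=\min\{i : x_i\neq y_i\}$), a run $\varepsilon_{i+1}=\cdots=\varepsilon_{i+m}=1$, $\varepsilon_{i+m+1}=0$ gives $f_\alpha^i(x)$ the $T$-coding $1^m 0\cdots$, so $n^*(f_\alpha^i(x),1)=m+1$; hence the key identity should read $\tilde d(f_\alpha^i(x),1)=2^{-(R_i(x)+1)}$ rather than $2^{-R_i(x)}$, and this unit shift has to be reconciled with the index range in item (i). Second, for (ii) and (iii) you identify $B_{2^{-n}}(1)$ with the one-sided interval $[1-2^{-n},1)=\{\,y:\tilde d(y,1)\le 2^{-n}\,\}$, but the Euclidean ball in $S^1$ around $1\equiv 0$ is two-sided and also contains $[0,2^{-n})$; item (iii) survives either way (the complement of the ball lies in the complement of $[1-2^{-n},1)$), but the first-hit inequality (ii) genuinely needs the one-sided convention, which is what the paper implicitly uses in the run-length application. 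Neither point affects the downstream use of the lemma, since Theorem~\ref{thm_runlength} only extracts $\log$-asymptotics of $\xi_n^{(1)}$, where such unit shifts disappear.
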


\begin{proof}
  The lemma follows directly from the definitions of
  $\tau_{2^{-n}}(x,1)$, and binary symbolic coding distance $\tilde{d}(\cdot,\cdot)$.
\end{proof}

Recall that $Y=[1/2,1)$ and for each $\tilde{x}\in Y$, we analogously define
the hitting time on the induced map $f_{Y}$ by
\begin{equation*} 
  \hat{\tau}_{\varepsilon}(x,\tilde{x}) := \min \{\, n \geq 1 : f_{Y}^n
  (x) \in B_{\varepsilon} (\tilde{x}) \,\}.
\end{equation*}
There is a relationship between $\tau$ and $\hat{\tau}$, that is
\begin{equation*} 
  \tau_{2^{-n}} (y,1) = \sum_{j=0}^{\hat{\tau}_{2^{-n}}(y,1)} R
  \left( f_{Y}^{j} (y)\right), \qquad \text{for all } y\in Y.
\end{equation*}

%

We are now ready to prove Theorem~\ref{thm_runlength}.

\begin{proof}[Proof of Theorem~\ref{thm_runlength}]
  We will first prove the first assertion \eqref{equ_runlength1} of
  Theorem~\ref{thm_runlength}. For any $\varepsilon > 0$ and Lebesgue
  almost every $y\in Y$, by Proposition~\ref{prop:SnR}, we have
  \[
  (\hat{\tau}_{2^{-n}}(y,1))^{\alpha-\varepsilon} \leq
  \tau_{2^{-n}}(y,1) = \sum_{j=0}^{\hat{\tau}_{2^{-n}}(y,1)} R
  \circ f_{Y}^{j}(y) \leq
    (\hat{\tau}_{2^{-n}}(y,1))^{\alpha+\varepsilon}
  \]
  if $n$ is large enough. Together with
  Corollary~\ref{cor.logithmnlaw} and Remark~\ref{rmk.logithmnlaw}, we conclude that
  \[
  \lim_{n\to\infty} \frac{ \log \tau_{2^{-n}}(y,1)}{-\alpha \log
    2^{-n}} = 1, \qquad \text{for Lebesgue almost every } y\in[1/2,1).
  \]
  Note also that for Lebesgue almost every $x\in S^{1}$, there is
  always a $y\in[1/2,1)$ such that
    $\tau_{2^{-n}}(x,1)=\tau_{2^{-n}}(y,1)$. This implies that
  \[
  \lim_{n\to\infty} \frac{ \log \tau_{2^{-n}}(x,1)}{-\alpha \log
    2^{-n}} = 1, \qquad \text{for Lebesgue almost every } x \in S^{1}.
  \]
  Hence, we have
  \[
  n \sim \frac{1}{\alpha} \log_{2}^{\tau_{2^{-n}}
    (x,1)},\qquad \text{as } n \to \infty.
  \]
  Together with assertions (ii) and (iii) in
  Lemma~\ref{Lem_hittingandrunlength},
  this implies that
  \[
  n^{-1/\alpha - \varepsilon} \leq \min_{1\leq i\leq n}\tilde{d}(f_\alpha^{i}(x),1)\leq
  n^{-1/\alpha + \varepsilon},
  \]
  if $n$ is large enough.

  Finally, for any $\varepsilon>0$, and sufficiently large $n$, we
  have
  \begin{align*}
    n^{-1/\alpha - \varepsilon} \leq \min_{1\leq i\leq n}
    \tilde{d}(f_\alpha^i(x),1)&\leq \min_{1\leq i \leq n} \max \{ \tilde{d}(f_\alpha^i
    (x), 1),2^{-(n-i)}\}\\ &\leq \min_{1\leq i \leq n^{1-\varepsilon}}
    \max \{ \tilde{d}(f_\alpha^i (x, 1)),2^{-(n-i)}\}\\ &\leq \min_{1\leq i
      \leq n^{1-\varepsilon}} \max \{ \tilde{d}(f_\alpha^i (x,
    1)),2^{-n+n^{1-\varepsilon}}\}\\ &\leq \min_{1\leq i \leq
      n^{1-\varepsilon}} \max \{ \tilde{d}(f_\alpha^i (x,1)),2^{-n/2} \} \\ &=
    \max\{\min_{1\leq i \leq n^{1 - \varepsilon}} \{
    \tilde{d}(f_\alpha^{i}(x),1),2^{-n/2}\}\}\\ &\leq
    \max\{(n^{(1-\varepsilon)})^{-1/\alpha + \varepsilon}, 2^{-n/2}\}
    \leq n^{-(1-\varepsilon)/\alpha + \varepsilon}.
  \end{align*}
  Since $\varepsilon$ is arbitrary, this implies that
  \[
  \lim_{n \to \infty} \frac{\log_{2} \left(\min_{1\leq i\leq n}\left(\max \{ \tilde{d}
    (f_\alpha^{i} (x),1),2^{-(n-i)}\}\right)\right) }{\log_{2} n} = -\frac{1}{\alpha}.
  \]
  Together with Item (i) of Lemma~\ref{Lem_hittingandrunlength},
  we have
  \[
  \lim_{n\to\infty}\frac{\xi^{(1)}_{n}(x)}{\log_{2}n}=\frac{1}{\alpha},
  \qquad \text{for Lebesgue almost every } x \in S^{1},
  \]
  which is \eqref{equ_runlength1}.

  We will now prove the second assertion \eqref{equ_runlength0} in Theorem~\ref{thm_runlength}. As before, we put
  $x_{0}=1/2$, and $x_{n+1} = f_\alpha^{-1} (x_{n}) \cap [0,1/2)$. For
  $x \in [0,1/2)$ we have $R(x) = n$ if and only if
  $x\in[x_{n},x_{n-1})$. Moreover, we have $x_{n} \sim
  n^{-\frac{1}{\alpha}}$, so $R (x) \sim |x|^{-\alpha}$. It is
  now clear that
  \begin{equation} \label{equ_extreme}
    \xi^{(0)}_{n}(x)=\max_{0\leq i\leq n-1} \{ \min \{ R (f_\alpha^{i}(x)),
    n-i \} \}.
  \end{equation}

  Let
  \[
  M_{n}:=\max_{0\leq i\leq n-1}\{R(f_\alpha^{i}(x))\}.
  \]
  By Item $(1)$ of Theorem~\ref{thm:maximaint}, for any $\varepsilon>0$, and almost
  every $x\in S^{1}$, we have
  \begin{equation*} 
    n^{1-\varepsilon}\leq M_{n}(x)\leq n^{1+\varepsilon}.
  \end{equation*}
  if $n$ is large enough.
  Therefore, we have
  \begin{align*}
    n^{1 + \varepsilon}\geq M_{n}(x)&\geq \max_{0\leq i\leq n-1} \{ \min
    \{ R (f_\alpha^{i}(x)), n - i \} \}\\ &\geq \max_{0\leq i \leq
      \frac{n - 1}{1 + \varepsilon}} \{ \min \{ R (f_\alpha^{i}(x)),n-i
    \} \} \\ & \geq \max_{0 \leq i\leq \frac{n - 1}{1 + \varepsilon}}
    \Bigl\{ \min \Bigl\{ R (f_\alpha^{i}(x)),\frac{\varepsilon n}{1 +
      \varepsilon} \Bigr\} \Bigr\}\\ &\geq \min \Bigl\{ \max_{0 \leq i
      \leq \frac{n - 1}{1 + \varepsilon}} \{ R (f_\alpha^{i} (x))
    \},\frac{\varepsilon n}{1 + \varepsilon} \Bigr\}\\ &\geq \min \Bigl\{
    \Bigl(\frac{n}{1 + \varepsilon} \Bigr)^{1 - \varepsilon},\frac{\varepsilon
      n}{1 + \varepsilon} \Bigr\} \\ &= \Bigl( \frac{n}{1 + \varepsilon}
    \Bigr)^{1 - \varepsilon},
  \end{align*}
  when $n$ is large enough.  Together with \eqref{equ_extreme}, this
  implies that for Lebesgue almost every $x\in S^{1}$,
  \[
  \lim_{n\to\infty} \frac{\log \xi^{(0)}_{n}(x)}{\log n}=1,
  \]
  as was to be proved.
\end{proof}

\section{Birkhoff sums of infinite observables, systems with strongly
oscillating behaviour} \label{oscill}

In this section we exhibit dynamical systems whose Birkhoff sums $S_{n}$ of an
infinite observable have a strongly oscillating bevaviour in the
sense that
\[
\liminf_{n \to \infty} \frac{\log S_{n}(x)}{\log n} < \limsup_{n \to
  \infty} \frac{\log S_{n}(x)}{\log n}.
\]
The behaviour oscillates between two different power laws. We will
also see examples with the same behaviour having polynomial decay of
correlations (SPDC). In particular the examples we use allow us to prove
Theorem~\ref{the:thereareoscillations} in Section~\ref{oscillsec}.
In Theorem~\ref{thm1}, we showed that this is optimal in
some sense: systems with decay of correlations faster than any
polynomial and sufficiently regular observables cannot have such a
strongly oscillating behaviour for observables diverging to $\infty$
at a power law speed.

\subsection{Birkhoff sums of infinite observables and rotations}

Let us recall the definition of Diophantine type of an irrational
number.

\begin{defn} \label{type}
  Given an irrational number $\theta $ we define the Diophantine
  \emph{type} of $\theta $ as the following (possibly infinite)
  number.
  \begin{equation*}
    \gamma (\theta ) = \inf \{\, \beta :\liminf_{q\rightarrow \infty
    }q^{\beta }\Vert q\theta \Vert >0 \,\}.
  \end{equation*}
\end{defn}

Every real number has Diophantine type $\geq 1$. The set of numbers of
type $1$ is of full measure; the set of numbers of type $\gamma $ has
Hausdorff dimension $\frac{2}{\gamma +1}$. There exist numbers of
infinite type, called \emph{Liouville numbers}; the set of which is
dense, uncountable and has zero Hausdorff dimension.
\begin{prop} \label{ooo}
  Let the system $(f_{\theta }, S^1,\mathrm{Leb})$ be the rotation of the circle
  $S^{1}$ by the angle $\theta $ of Diophantine type $\gamma $.
  Consider $\beta \geq 1$ and the non-integrable observable $\psi
  (x)=[d (x,0)]^{-\beta }$. When $\beta >1,$ and $\frac{1}{\gamma
  }<1-\frac{1}{\beta }$ we have
  \begin{equation*}
    \liminf_{n\rightarrow \infty }\frac{\log S_{n}^{\psi }(x)}{\log
      n}\leq 1+ \frac{\beta }{\gamma }<\beta \leq
    \limsup_{n\rightarrow \infty }\frac{\log S_{n}^{\psi }(x)}{\log
      n},
  \end{equation*}
  for Lebesgue almost every $x \in S^1$.
\end{prop}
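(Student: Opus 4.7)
The plan is to establish the two inequalities separately by choosing two different subsequences of integers $n$ that exploit the continued-fraction expansion of $\theta$. Write $(p_j/q_j)$ for the convergents of $\theta$, and recall that $\|q_j\theta\|$ is comparable to $1/q_{j+1}$.

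For the limsup bound $\limsup_n\log S_n(x)/\log n\geq\beta$, I take $n=q_j$. By the three-distance theorem, the orbit $\{k\theta\bmod 1:0\leq k<q_j\}$ partitions $S^1$ into arcs of length at most $C/q_j$, so for every $x\in S^1$ there is some $k<q_j$ with $\|x+k\theta\|\leq C/q_j$. Hence $S_n(x)\geq M_n(x)\geq (n/C)^{\beta}$, so $\limsup_n\log S_n/\log n\geq\beta$ holds for \emph{every} $x\in S^1$.

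For the liminf bound I fix $\delta\in(0,\gamma-\beta/(\beta-1))$, which is positive by the hypothesis $1/\gamma<1-1/\beta$. The definition of type $\gamma$ gives an infinite subsequence $(j_l)$ of indices with $\|q_{j_l}\theta\|\leq q_{j_l}^{-\gamma+\delta}$, equivalently $Q_l:=q_{j_l+1}\geq q_l^{\gamma-\delta}$, where $q_l:=q_{j_l}$. For a sufficiently large absolute constant $C_0$ I set $n_l=\lfloor Q_l/C_0\rfloor$. Writing $\theta=p/q_l+\varepsilon_l$ with $|q_l\varepsilon_l|=\|q_l\theta\|\leq 1/Q_l$, the identity
\[
k\theta\equiv bp/q_l+aq_l\varepsilon_l+b\varepsilon_l\pmod 1\qquad (k=aq_l+b,\ 0\leq a<n_l/q_l,\ 0\leq b<q_l)
\]
shows that $\{k\theta\bmod 1:0\leq k<n_l\}$ forms clusters around the lattice $\{m/q_l:0\leq m<q_l\}$ (since $\gcd(p,q_l)=1$ makes $\{bp/q_l\bmod 1:b\}$ a permutation of $\{m/q_l:m\}$), each cluster having diameter at most $(n_l/q_l)\|q_l\theta\|+\|q_l\theta\|\leq 2/(C_0q_l)$.

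I claim next that for Lebesgue-a.e.\ $x$ there are infinitely many $l$ with $\|q_lx\|\geq 1/4$. Since $(q_{j_l})$ is lacunary (it is a subsequence of the geometrically growing sequence $(q_j)$), Koksma's classical theorem on equidistribution of lacunary integer sequences implies $(q_lx\bmod 1)$ is equidistributed for Lebesgue-a.e.\ $x$, and the claim follows because $\{y\in S^1:\|y\|\geq 1/4\}$ has measure $1/2$. For such $l$ and $x$, the distance from $-x$ to the lattice $(1/q_l)\mathbb{Z}$ is at least $1/(4q_l)$; combined with the cluster-diameter bound (taking $C_0\geq 16$) this yields $\min_{0\leq k<n_l}\|x+k\theta\|\geq 1/(8q_l)$, and hence
\[
S_{n_l}(x)\leq n_l\cdot\max_{0\leq k<n_l}\psi(f_\theta^k x)\leq n_l\cdot(8q_l)^{\beta}.
\]
Using $n_l\asymp Q_l\geq q_l^{\gamma-\delta}$ we get
\[
\frac{\log S_{n_l}(x)}{\log n_l}\leq 1+\beta\cdot\frac{\log q_l}{\log n_l}+o(1)\leq 1+\frac{\beta}{\gamma-\delta}+o(1).
\]
Taking $l\to\infty$ along the set produced by the claim, and then $\delta\to 0^{+}$, gives $\liminf_n\log S_n(x)/\log n\leq 1+\beta/\gamma$ for Lebesgue-a.e.\ $x$.

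The main obstacle is the equidistribution of $(q_lx\bmod 1)$ for Lebesgue-a.e.\ $x$, which requires a Koksma-type theorem for lacunary integer sequences; a secondary technical point is the bookkeeping of error terms in the expansion $k\theta\equiv bp/q_l+aq_l\varepsilon_l+b\varepsilon_l\pmod 1$, where one must verify that the residual error $b\varepsilon_l$ (of size $\leq 1/Q_l$) is negligible compared to the leading cluster radius $1/(C_0q_l)$, so that the cluster structure remains intact and well separated from $-x$ whenever $\|q_lx\|\geq 1/4$.
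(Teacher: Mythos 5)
Your proof is correct, and it takes a genuinely different route from the paper's. The paper controls the Birkhoff sum through the single quantity $d_n(x,0)=\min_{0\le k\le n}\|x+k\theta\|$ via the elementary sandwich $d_n(x,0)^{-\beta}\le S_n^\psi(x)\le n\,d_n(x,0)^{-\beta}$, and then invokes two cited lemmas: the Kim--Seo result that $\underline H(x,0)=1$ and $\overline H(x,0)=\gamma$ for a.e.\ $x$ for circle rotations, and Proposition~\ref{prop:htll} identifying $\underline H,\overline H$ with the reciprocals of $\limsup_n(-\log d_n/\log n)$ and $\liminf_n(-\log d_n/\log n)$. That gives $\limsup_n(-\log d_n/\log n)=1$ and $\liminf_n(-\log d_n/\log n)=1/\gamma$ a.e., and the two inequalities drop out in two lines. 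You instead reprove the relevant parts of those lemmas from scratch out of the continued-fraction expansion: the three-gap theorem at $n=q_j$ yields the $\limsup\ge\beta$ bound (in fact for \emph{every} $x$, which is slightly stronger than stated), and for the $\liminf$ you build explicit scales $n_l\asymp q_{j_l+1}$ at which the orbit $\{k\theta:k<n_l\}$ clusters near the lattice $q_{j_l}^{-1}\mathbb Z$ and invoke a.e.\ equidistribution of $(q_{j_l}x\bmod1)$ to guarantee $x$ stays $\gtrsim1/q_{j_l}$ away from the cluster centres infinitely often. The trade-off is clear: the paper's argument is transparent and two lines long modulo the two cited lemmas; yours is longer but self-contained and makes the arithmetic mechanism visible. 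Two minor remarks: the constraint $\delta<\gamma-\beta/(\beta-1)$ is superfluous (any $\delta\in(0,\gamma-1)$ makes $Q_l/q_l\to\infty$ and then $\delta\to0^+$ at the end); and the a.e.\ equidistribution of $(a_n x\bmod1)$ is usually attributed to Weyl for an arbitrary strictly increasing integer sequence, so lacunarity is not actually needed, though your subsequence is in fact super-lacunary since $q_{j_{l+1}}/q_{j_l}\ge q_{j_l}^{\gamma-\delta-1}\to\infty$.
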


Before the proof we recall some hitting time results on circle
rotations and relation with minimal distance iterations: the behaviour
of hitting time in small targets for circle rotations with angle
$\theta $ depends on the Diophantine type of irrational $\theta$. We
state the following, where we recall that $\overline{H},\underline{H}$
are defined in Section~\ref{sec:introduction}.
\begin{lemma}[\cite{kimseo}] \label{KS...}
  If $f_{\theta }$ is a rotation of the circle, $y$ a point on the
  circle and $\gamma $ is the Diophantine type of $\theta$ then for
  Lebesgue almost every $x$
  \begin{equation*}
    \overline{H}(x,y)=\gamma ,\qquad \underline{H}(x,y)=1.
  \end{equation*}
\end{lemma}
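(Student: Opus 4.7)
The plan is to fix $y$ and use the translation invariance of Lebesgue measure under $f_\theta$ to reduce to studying $\tau_r(0,z):=\min\{n\geq 1:\|n\theta-z\|<r\}$ for Lebesgue-a.e.\ $z=y-x\in S^1$. I would work with the continued fraction convergents $p_k/q_k$ of $\theta$, so that $\|q_k\theta\|\asymp 1/q_{k+1}$, and exploit that $\theta$ having Diophantine type $\gamma$ means $\|q\theta\|\geq c_\varepsilon q^{-\gamma-\varepsilon}$ for every $q$, while $\|q_{k_j}\theta\|\leq q_{k_j}^{-\gamma+\varepsilon}$ along some subsequence $k_j$ (equivalently $q_{k_j+1}\geq q_{k_j}^{\gamma-\varepsilon}$).

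Three of the four required bounds are comparatively easy. The deterministic bound $\overline{H}(x,y)\leq\gamma$ (valid for every $x,y$) follows from the Erd\H{o}s--Tur\'{a}n--Koksma discrepancy estimate $D_N(\theta)\leq C_\varepsilon N^{1-1/\gamma+\varepsilon}$, which forces any arc of length $2r$ to be hit by some $n\theta$ with $n\leq C_\varepsilon r^{-\gamma-O(\varepsilon)}$. The bound $\underline{H}(x,y)\geq 1$ is a first Borel--Cantelli argument along $r_n=2^{-n}$: the measure of $\{x:\tau_{r_n}(x,y)<r_n^{-1+\varepsilon}\}$ is at most $2r_n^\varepsilon$ by a union bound over orbit points, and $2^{-n\varepsilon}$ is summable, so $\underline{H}\geq 1-\varepsilon$ a.e.\ and I let $\varepsilon\to 0$ along a rational sequence. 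For $\underline{H}(x,y)\leq 1$ I would apply the three-distance theorem to conclude that the orbit becomes $C/N$-dense in $S^1$ for every $N$ lying in the ``principal'' regime between consecutive convergents, giving $\tau_r\leq C/r$ along a cofinal sequence of scales.

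The main obstacle is the remaining lower bound $\overline{H}(x,y)\geq\gamma$ for a.e.\ $x$. The mechanism is geometric: along the subsequence $k_j$ with $q_{k_j+1}\geq q_{k_j}^{\gamma-\varepsilon}$, the three-distance theorem applied at time $N_j:=q_{k_j+1}-1$ produces ``long'' gaps of length $\|q_{k_j-1}\theta\|\asymp 1/q_{k_j}\asymp N_j^{-1/\gamma+o(1)}$, and the union of these long gaps covers a positive fraction of $S^1$ independent of $j$. Whenever $z$ sits in the middle third of such a gap at scale $N_j$, one has $\tau_{r_j}(0,z)>N_j$ for $r_j\asymp N_j^{-1/\gamma+o(1)}$, yielding $\log\tau_{r_j}/(-\log r_j)\geq\gamma-O(\varepsilon)$. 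To promote ``some $z$'' to ``Lebesgue-a.e.\ $z$'' I would invoke a converse Borel--Cantelli argument: each long-gap event has Lebesgue measure bounded below by an absolute constant, and because the scales $N_j$ are super-exponentially separated while the gap combinatorics at scale $N_{j+1}$ is controlled by lengths much smaller than the gaps at scale $N_j$, these events are asymptotically independent in Lebesgue measure, so their $\limsup$ has full mass. A cleaner alternative, which I would fall back on if the independence accounting proves awkward, is to encode $z$ by its Ostrowski expansion $z=\sum_k c_{k+1}\|q_k\theta\|$ relative to $\theta$; the best inhomogeneous approximants are then read off the digits $(c_k)$, and the identities $\overline{H}=\gamma$, $\underline{H}=1$ reduce to Birkhoff-average statements for the natural Gauss-type skew product on the digit space, whose invariant measure is equivalent to Lebesgue.
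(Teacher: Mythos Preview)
The paper does not prove this lemma; it is quoted from Kim--Seo \cite{kimseo} and used as a black box. So there is no ``paper's own proof'' to compare against, and your outline has to be judged on its own.

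Your plan is sound and the four pieces are correctly identified. A few comments:

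\emph{On $\overline{H}\leq\gamma$.} The discrepancy route works in spirit, but the exponent you wrote should be checked (Erd\H{o}s--Tur\'an alone gives only $D_N=O(N^{-1/(\gamma+1)+\varepsilon})$; the sharp $N^{-1/\gamma+\varepsilon}$ needs the finer continued-fraction bound on $ND_N^*$). A cleaner, self-contained argument avoids discrepancy entirely: for $q_k\le N<q_{k+1}$ the three-distance theorem gives maximal gap $\le\|q_{k-1}\theta\|<1/q_k$, while type $\gamma$ forces $q_{k+1}\le q_k^{\gamma+\varepsilon}$, hence $q_k\ge N^{1/(\gamma+\varepsilon)}$. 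Thus $d_N(z)\le N^{-1/\gamma+o(1)}$ for every $z$, and Lemma~\ref{prop:htll} gives $\overline{H}\le\gamma$ deterministically.

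\emph{On $\overline{H}\geq\gamma$.} Your geometric mechanism is right: along $k_j$ with $q_{k_j+1}\ge q_{k_j}^{\gamma-\varepsilon}$, take $N_j\asymp q_{k_j+1}$; the three-gap structure leaves a union of ``long'' gaps of length $\asymp 1/q_{k_j}$ whose total measure is bounded below by an absolute constant, and $z$ in the middle third of such a gap satisfies $\log\tau_{r_j}/(-\log r_j)\ge\gamma-O(\varepsilon)$. However, you do not need any independence or second Borel--Cantelli here. Reverse Fatou already gives $|\limsup_j E_j|\ge\limsup_j|E_j|\ge c>0$, so $\{x:\overline{H}(x,y)\ge\gamma-\varepsilon\}$ has positive Lebesgue measure. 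This set is $f_\theta$-invariant (since $\tau_r(f_\theta x,y)=\tau_r(x,y)-1$ once $\tau_r(x,y)\ge 2$, which holds for all small $r$), and $f_\theta$ is ergodic, so the set has full measure. This both simplifies your Approach~1 and removes the ``if the independence accounting proves awkward'' caveat.

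Your Approach~2 via the Ostrowski expansion is indeed the route taken in \cite{kimseo}.
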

In \cite{G4}, the following lemma is proved.

\begin{lemma}[{\cite[Proposition~11]{G4}}] \label{prop:htll} 
  Given any system $f$ on a metric space $(X,d)$ let us define
  $d_{n}(x,y)=\min_{0\leq i\leq n} d(f^{i}(x),y)$. Then
  \begin{equation*}
    \underline{H}(x,\tilde{x})=\left( \limsup_{n\rightarrow \infty
    }\frac{-\log d_{n}(x,\tilde{x})}{\log n}\right) ^{-1}
  \end{equation*}
  and
  \begin{equation*}
    \overline{H}(x,\tilde{x})=\left( \liminf_{n\rightarrow \infty
    }\frac{-\log d_{n}(x,\tilde{x})}{\log n}\right) ^{-1}.
  \end{equation*}
\end{lemma}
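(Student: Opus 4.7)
The plan is to exploit the simple duality between the hitting time and the minimum-distance sequence, namely
\[
\tau_r(x,\tilde{x}) \leq n \quad \Longleftrightarrow \quad d_n(x,\tilde{x}) \leq r,
\]
which is immediate from the definitions (with mild care about open versus closed balls). From this bijective correspondence between events, logarithmic scaling in $r$ in the variable $\tau_r$ translates directly into logarithmic scaling in $n$ in the variable $d_n$, and vice versa. Write $\overline{L} = \limsup_{n\to\infty} \tfrac{-\log d_n}{\log n}$ and $\underline{L} = \liminf_{n\to\infty} \tfrac{-\log d_n}{\log n}$; the two claims are $\underline{H} = 1/\overline{L}$ and $\overline{H} = 1/\underline{L}$.

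First I would prove $\underline{H} \leq 1/\overline{L}$. By definition of $\overline{L}$, for every $\varepsilon > 0$ there are infinitely many $n$ with $d_n \leq n^{-(\overline{L}-\varepsilon)}$. Setting $r_n = n^{-(\overline{L}-\varepsilon)}$ gives $\tau_{r_n} \leq n$ along this subsequence, hence
\[
\frac{\log \tau_{r_n}}{-\log r_n} \leq \frac{\log n}{(\overline{L}-\varepsilon)\log n} = \frac{1}{\overline{L}-\varepsilon},
\]
and letting $\varepsilon \downarrow 0$ yields $\underline{H} \leq 1/\overline{L}$. For the reverse inequality, use that for every $\varepsilon > 0$ the bound $d_n \geq n^{-(\overline{L}+\varepsilon)}$ holds for all sufficiently large $n$; then for small $r$, letting $n = \tau_r$ and using $d_n \leq r$ gives $n^{-(\overline{L}+\varepsilon)} \leq r$, i.e.\ $\log \tau_r \geq (-\log r)/(\overline{L}+\varepsilon)$ eventually, hence $\underline{H} \geq 1/(\overline{L}+\varepsilon)$, and $\varepsilon \downarrow 0$ finishes the first identity.

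The second identity follows by the symmetric argument. For $\overline{H} \leq 1/\underline{L}$: since $d_n \leq n^{-(\underline{L}-\varepsilon)}$ holds eventually, given small $r$ the least $n$ with $n^{-(\underline{L}-\varepsilon)} \leq r$ satisfies $\tau_r \leq n \leq r^{-1/(\underline{L}-\varepsilon)}+1$, so $\limsup_{r\to 0}\log\tau_r/(-\log r) \leq 1/(\underline{L}-\varepsilon)$. For $\overline{H} \geq 1/\underline{L}$: along a subsequence on which $d_n \geq n^{-(\underline{L}+\varepsilon)}$, the choice $r_n = \tfrac{1}{2}n^{-(\underline{L}+\varepsilon)}$ forces $\tau_{r_n} > n$, giving a subsequence along which $\log\tau_{r_n}/(-\log r_n) \to 1/(\underline{L}+\varepsilon)$ from above, so $\overline{H} \geq 1/(\underline{L}+\varepsilon)$, and $\varepsilon \downarrow 0$ completes the proof.

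I do not foresee a real obstacle; the entire argument is bookkeeping on the equivalence $\{\tau_r \leq n\} = \{d_n \leq r\}$. The only minor technical nuisance is to make sure the strict/non-strict inequality convention (open versus closed ball in the definition of $\tau_r$) is handled consistently, which is harmless since a factor of $2$ (or any other positive multiplicative correction, as in the choice $r_n = \tfrac12 n^{-(\underline{L}+\varepsilon)}$) is absorbed by $\log$ in the limit.
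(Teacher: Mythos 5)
The paper does not contain its own proof of this lemma: it is a cited result (\cite[Proposition~11]{G4}, Galatolo--Peterlongo), quoted for later use. So there is no internal proof to compare against. That said, your argument is correct, and the duality $\{\tau_r\le n\}\leftrightarrow\{d_n\le r\}$ is precisely the mechanism used in the original reference; the rest is indeed the $\varepsilon$-bookkeeping you carry out, translating a $\limsup$ in one variable into a $\liminf$ in the other and back.

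Two small points worth tightening. First, besides the open-versus-closed-ball mismatch you flag (the definition of $\tau_r$ uses the open ball $B(\tilde x,r)$, while $d_n\le r$ is a non-strict inequality), there is also an index mismatch: $\tau_r$ ranges over $n\ge 1$, whereas $d_n$ includes the $i=0$ term $d(x,\tilde x)$. So the clean equivalence $\{\tau_r\le n\}=\{d_n\le r\}$ fails when $r\ge d(x,\tilde x)$. It only holds for $r<d(x,\tilde x)$, which is eventually true as $r\to 0$ provided $x\ne\tilde x$; and for $x=\tilde x$ the identities are vacuous or trivial. It would be good to state this explicitly, since otherwise the very first displayed equivalence is false as written. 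Second, your $\varepsilon$-argument implicitly assumes $0<\overline{L},\underline{L}<\infty$ (e.g.\ writing $r_n=n^{-(\overline L-\varepsilon)}$ only makes sense when $\overline L>\varepsilon$); the degenerate cases $\overline L\in\{0,\infty\}$ or $\underline L\in\{0,\infty\}$ need a one-line remark (in each case one of the two inequalities is automatic and the other follows from the same estimate with $K$ in place of $\overline L\pm\varepsilon$). Neither issue affects the soundness of the approach.
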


\begin{proof}[Proof of Proposition~\ref{ooo}]
  We remark that
  \begin{equation*}
    d_{n}(x,0)^{-\beta }\leq S_{n}^{\psi }(x)\leq nd_{n}(x,0)^{-\beta
    }
  \end{equation*}
  and
  \begin{equation*}
    \frac{-\beta \log d_{n}(x,0)}{\log n}\leq \frac{\log S_{n}^{\psi
      }(x)}{\log n }\leq \frac{\log n-\beta \log d_{n}(x,0)}{\log n}.
  \end{equation*}
  By Lemma~\ref{KS...} and\ref{prop:htll} we get
  \begin{align*}
    \liminf_{n \to \infty} \frac{\log S_{n}^{\psi }(x)}{\log n}& \leq
    \liminf_{n \to \infty} \frac{\log n-\beta \log d_{n}(x,0)}{\log n}
    \\ & \leq 1+\frac{\beta }{\gamma}, \\ \limsup_{n \to \infty}
    \frac{\log S_{n}^{\psi }(x)}{\log n}& \geq \beta,
  \end{align*}
  for Lebesgue almost every $x$.
  When $\beta >1,$ and $\frac{1}{\gamma }<1-\frac{1}{\beta }$ we have
  \begin{equation*}
    \liminf_{n \to \infty} \frac{\log S_{n}^{\psi }(x)}{\log n} <
    \limsup_{n \to \infty} \frac{\log S_{n}^{\psi }(x)}{\log
      n},
  \end{equation*}
  for Lebesgue almost every $x$.
\end{proof}

\subsection{Power law mixing examples with oscillating behaviour}
In this section we consider examples of mixing systems also having a strongly
oscillating behaviour. Consider a class of skew products $F\colon[0,1]\times
  S^{1}\rightarrow \lbrack 0,1]\times S^{1}$ defined by
  \begin{equation}
    F_{\theta}(x,t)=(T(x),t+\theta \eta), \label{skewprod}
  \end{equation}
  where
  \begin{equation*}
    T(x)=2x\mod 1,
  \end{equation*}
  and $\eta =1_{[\frac{1}{2},1]}$ is the characteristic function of
  the interval $[\frac{1}{2},1]$.  These maps are piecewise constant
  toral extensions. In these kind of systems, the second coordinate is
  rotated by $\theta $ if the first coordinate belongs to
  $[\frac{1}{2},1]$. We now consider the observable $\tilde{\psi}
  \colon [0,1]\times S^{1}\rightarrow \mathbb{R}$ depending only on
  the second coordinate, an example being $\tilde{\psi} (x,t)=[d
    (t,0)]^{-\beta }$, where $d$ is the distance on $S^{1}$. We have
  the following result.

\begin{prop} \label{prop:oscillating}
  Let $F(x,t)$ be the skew product defined by equation
  \eqref{skewprod}, and suppose that $\tilde{\psi} \colon [0,1]\times
  S^{1}\rightarrow \mathbb{R}$ is given by $\tilde{\psi}(x,t)=[d
    (t,0)]^{-\beta }$ for some $\beta>0$. Then
  \begin{align*}
    \liminf_{n\rightarrow \infty }\frac{\log
      S_{n}^{\tilde{\psi}}(x,t)}{\log n}& \leq 2+\frac{\beta }{\gamma
      (\theta )}, \\ \limsup_{n\rightarrow \infty }\frac{\log
      S_{n}^{\tilde{\psi}}(x,t)}{\log n}& \geq \beta .
  \end{align*}
\end{prop}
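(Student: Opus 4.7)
The plan is to reduce everything to the rotation situation already treated in Proposition~\ref{ooo}, by noting that the second coordinate of $F_\theta^k(x,t)$ is precisely a rotation orbit sampled at an ergodic time-change. First I would set
\[
N_k(x) := \sum_{i=0}^{k-1}\eta(T^i x), \qquad D_N(t) := \min_{0\leq j\leq N} d(t+\theta j,0).
\]
By the very definition of the skew product $F_\theta$, the second coordinate of $F_\theta^k(x,t)$ equals $t+\theta N_k(x)$, so
\[
S_n^{\tilde\psi}(x,t) \;=\; \sum_{k=0}^{n-1}\bigl[d(t+\theta N_k(x),0)\bigr]^{-\beta}.
\]
Since $k\mapsto N_k(x)$ is non-decreasing with unit jumps, the values $\{N_k(x):0\leq k<n\}$ exhaust $\{0,1,\ldots,N_n(x)\}$. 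Grouping equal terms and bounding the multiplicities above by $n$ and below by $1$, I obtain the two-sided comparison
\[
D_{N_n(x)}(t)^{-\beta} \;\leq\; S_n^{\tilde\psi}(x,t) \;\leq\; n\cdot D_{N_n(x)}(t)^{-\beta}.
\]

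Next I would control $N_n(x)$ and $D_N(t)$ separately. Since $T$ is ergodic with respect to Lebesgue measure and $\int\eta = 1/2$, Birkhoff's theorem gives $N_n(x)/n\to 1/2$ for Lebesgue-a.e.\ $x$; in particular $\log N_n(x)/\log n\to 1$, and $N_n(x)$ is an increasing sequence of integers with unit jumps that tends to infinity, so it attains every sufficiently large integer. For the distance, note that $j\mapsto t+\theta j$ is the rotation orbit of $t$ by $\theta$, so Lemma~\ref{prop:htll} combined with Lemma~\ref{KS...} yields, for Lebesgue-a.e.\ $t\in S^1$,
\[
\liminf_{N\to\infty}\frac{-\log D_N(t)}{\log N} = \frac{1}{\gamma(\theta)},
\qquad
\limsup_{N\to\infty}\frac{-\log D_N(t)}{\log N} = 1.
\]
By Fubini, both these full-measure properties hold simultaneously for $(x,t)$ in a full Lebesgue-measure subset of $[0,1]\times S^1$.

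For the $\limsup$ bound, I would use the lower comparison $S_n^{\tilde\psi}\geq D_{N_n(x)}^{-\beta}$ and pick a sequence $N_k\to\infty$ realising $-\log D_{N_k}(t)/\log N_k\to 1$. For Lebesgue-a.e.\ $x$ one can choose $n_k$ with $N_{n_k}(x)=N_k$; since $N_{n_k}(x)\sim n_k/2$ one has $\log n_k\sim \log N_k$, hence
\[
\frac{\log S_{n_k}^{\tilde\psi}(x,t)}{\log n_k} \;\geq\; \beta\cdot\frac{-\log D_{N_k}(t)}{\log n_k} \;\longrightarrow\; \beta.
\]
For the $\liminf$ bound I would instead use the upper comparison, which gives
\[
\frac{\log S_n^{\tilde\psi}(x,t)}{\log n} \;\leq\; 1 + \beta\cdot\frac{-\log D_{N_n(x)}(t)}{\log n}.
\]
Choosing a sequence $N_k$ along which $-\log D_{N_k}(t)/\log N_k\to 1/\gamma(\theta)$, and corresponding $n_k$ with $N_{n_k}(x)=N_k$, the same asymptotic $\log n_k\sim \log N_k$ yields $\liminf \leq 1+\beta/\gamma(\theta)\leq 2+\beta/\gamma(\theta)$, which is the stated (weaker) bound.

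The main obstacle is the bookkeeping for joint almost-sure statements in $(x,t)$, which requires Fubini plus the observation that $N_n(x)$ takes every large integer as a value (so one can synchronise the $x$-subsequence with the good $t$-subsequence for $D_N$); the rotation-type estimates themselves are off-the-shelf from Lemmas~\ref{prop:htll} and~\ref{KS...}.
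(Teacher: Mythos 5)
Your proof is correct, and it actually takes a genuinely different (and slightly sharper) route than the paper's. The paper sandwiches the skew-product sum between Birkhoff sums of the base rotation,
\[
S_{n/4}^{\psi}(t) \leq S_{n}^{\tilde\psi}(x,t) \leq n\, S_{S_n^\eta(x,t)}^{\psi}(t) \leq n\, S_n^{\psi}(t),
\]
and then invokes Proposition~\ref{ooo} as a black box. This is simpler, but it effectively double-counts: the factor $n$ already pays for multiplicities, and $S_n^\psi(t)$ is itself bounded by $n\, d_n(t,0)^{-\beta}$ inside the proof of Proposition~\ref{ooo}, so the chain of inequalities amounts to $S_n^{\tilde\psi} \leq n^2 d_n(t,0)^{-\beta}$, producing the $2+\beta/\gamma$ exponent. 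You instead go directly to the minimal-distance function $D_N(t)$ along the rotation orbit, getting $S_n^{\tilde\psi}(x,t) \leq n\, D_{N_n(x)}(t)^{-\beta}$ (multiplicity times the single largest term, which is not wasteful), and then feed in Lemmas~\ref{prop:htll} and~\ref{KS...} directly. The price is the subsequence bookkeeping: you must synchronise the $n$-subsequence with the $t$-subsequence that realises $\liminf_N (-\log D_N(t))/\log N = 1/\gamma$, which requires noting that $N_n(x)$ hits every large integer (unit jumps) and that $\log N_{n}(x)/\log n \to 1$ (ergodic theorem), plus a Fubini to get a joint full-measure set. In exchange you obtain $\liminf \leq 1 + \beta/\gamma$, strictly sharper than the stated $2 + \beta/\gamma$; you correctly flag this as only proving the weaker claimed bound. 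The $\limsup\geq\beta$ part is parallel in the two approaches. The only cosmetic quibbles are an off-by-one between $N_{n-1}(x)$ and $N_n(x)$ in the sandwich (harmless), and that the argument for $\limsup$ really uses the \emph{lower} comparison $S_n^{\tilde\psi}\geq D_{N_{n-1}(x)}^{-\beta}$ together with the $\limsup_N$-part of the rotation estimate, exactly as you wrote.
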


\begin{proof}
  We remark that on the system $(F,[0,1]\times S^{1})$ the Lebesgue
  measure is invariant. Let $S_{n}^{\varphi }(x)$ be the Birkhoff sum
  of $\varphi $ for $(T,[0,1]).$ Let $S_{n}^{\psi }(t)$ be the sum of
  the observable $\psi $ on $S^{1}$ as in Proposition~\ref{ooo}. Since
  $\psi $ is positive and $\int \eta \,dm = \frac{1}{2}$, we have by
  the pointwise ergodic theorem applied to $(T,[0,1])$, that for
  a.e.\ $x$
  \begin{equation*}
    S_{\frac{n}{4}}^{\psi }(t)\leq S_{n}^{\tilde{\psi}}(x,t)\leq
    nS_{S_{n}^{\eta}(x,t)}^{\psi }(t)\leq nS_{n}^{\psi }(t)
  \end{equation*}
  holds eventually in $n$. By this and Proposition~\ref{ooo}, we get
  \begin{equation*}
    \beta \leq \limsup_{n\rightarrow \infty }\frac{\log
      S_{\frac{n}{4}}^{\psi }(t)}{\log n}\leq \limsup_{n\rightarrow
      \infty }\frac{\log S_{n}^{\tilde{\psi }}(x,t)}{\log n}
  \end{equation*}
  and
  \begin{equation*}
    \liminf_{n\rightarrow \infty }\frac{\log
      S_{n}^{\tilde{\psi}}(x,t)}{\log n} \leq 2+\frac{\beta }{\gamma
    }.\qedhere
  \end{equation*}
\end{proof}

On the above skew products it is possible to establish power law
bounds for the rate of decay of correlations on Lipschitz
observables. The power law exponent depend on the Diophantine type of
the translation angle $\theta$.  In \cite[Lemma~11 and
  Section~5]{GSR}, the following is proved.

\begin{prop} \label{prop:poldecay}
  Let $p$ be the exponent of power law decay with respect to Lipschitz
  observables defined by
  \begin{equation*}
    p=\liminf_{n\rightarrow \infty }\frac{-\log \Theta (n)}{\log n},
  \end{equation*}
	where $\Theta(n)$ is the correlation decay rate function.
  For the map defined by \eqref{skewprod} the exponent $p$ satisfies
  \begin{equation*}
    \frac{1}{2\gamma (\theta )}\leq p\leq \frac{6}{\max (2,\gamma
      (\theta ))-2}.
  \end{equation*}
\end{prop}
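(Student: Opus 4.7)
The plan is to analyse decay of correlations for $F_\theta$ via Fourier decomposition in the fibre variable, which reduces everything to spectral properties of a family of twisted transfer operators for the doubling map. Write $\varphi(x,t)=\sum_k \hat\varphi_k(x) e^{2\pi i k t}$ and similarly for $\psi$. Because $\eta$ depends only on $x$ and the translation on $S^1$ decouples modes, the correlation integral splits as
\begin{equation*}
\int \varphi\circ F_\theta^{\,n}\,\bar\psi\,dm \;=\; \sum_{k\in\mathbb{Z}} \int \hat\varphi_k(T^n x)\, e^{2\pi i k\theta S_n^\eta(x)}\, \overline{\hat\psi_k(x)}\,dx.
\end{equation*}
The $k=0$ term decays exponentially because $T$ is uniformly expanding. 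For $k\neq 0$ the $k$-th integral equals $\langle L_{k\theta}^n \hat\varphi_k,\overline{\hat\psi_k}\rangle$, where the twisted transfer operator is
\begin{equation*}
(L_\xi g)(y)=\tfrac{1}{2}\bigl(g(y/2)+e^{2\pi i\xi}g((y+1)/2)\bigr),
\end{equation*}
using $\eta(y/2)=0$ and $\eta((y+1)/2)=1$. Thus the whole question becomes: how fast does $\|L_{k\theta}^n\|\to 0$, and how does this norm depend on $k$?

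For the lower bound on $p$, I would establish a quantitative spectral gap for $L_\xi$ on Lipschitz functions, showing $\|L_\xi^n g\|\leq C\,\rho(\xi)^n\,\|g\|_{\mathrm{Lip}}$ with $\rho(\xi)<1$ and $1-\rho(\xi)\gtrsim \|\xi\|^2$ (here $\|\xi\|$ denotes distance to $\mathbb Z$). The quadratic behaviour near $\xi=0$ is standard for averages of two phases $1$ and $e^{2\pi i\xi}$. Substituting $\xi=k\theta$ and using the Diophantine estimate $\|k\theta\|\geq c\,|k|^{-\gamma(\theta)-\varepsilon}$ gives $\rho(k\theta)^n \leq \exp(-c' n |k|^{-2\gamma(\theta)-2\varepsilon})$. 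Combining with the fast ($|k|^{-2}$, from Lipschitz regularity) decay of the Fourier coefficients $\hat\varphi_k,\hat\psi_k$ and optimising the cutoff $|k|\leq K(n)$ produces a polynomial bound $\Theta(n)\lesssim n^{-1/(2\gamma(\theta))+o(1)}$, which gives $p\geq 1/(2\gamma(\theta))$.

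For the upper bound on $p$, I would exhibit concrete Lipschitz test observables that realise slow decay. Use the rational approximants $p_j/q_j$ from the continued fraction of $\theta$: by definition of Diophantine type, along a subsequence $\|q_j\theta\|\sim q_j^{-\gamma(\theta)+o(1)}$. Take $\varphi(x,t)=\psi(x,t)=e^{2\pi i q_j t}$, which is Lipschitz with $\|\cdot\|_{\mathrm{Lip}}\sim q_j$. Then the relevant twisted operator $L_{q_j\theta}$ is a small perturbation of $L_0$, whose leading eigenvalue $\lambda(q_j\theta)$ satisfies $1-|\lambda(q_j\theta)|\asymp \|q_j\theta\|^2 \asymp q_j^{-2\gamma(\theta)+o(1)}$. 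Choosing $n\sim q_j^{2\gamma(\theta)+\varepsilon}$ keeps $|\lambda(q_j\theta)|^n$ bounded below, so the correlation integral does not decay faster than $1/(\|\varphi\|_{\mathrm{Lip}}\|\psi\|_{\mathrm{Lip}})\sim q_j^{-2}\sim n^{-1/\gamma(\theta)+o(1)}$, yielding the upper bound on $p$ claimed in the statement (up to the numerical factor $6/(\max(2,\gamma)-2)$, which comes from a more careful optimisation over test functions and Fourier cutoffs in \cite{GSR}).

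The main obstacle is the quantitative perturbation theory for the family $\{L_\xi\}_\xi$: proving uniform Lasota--Yorke inequalities in $\xi$ with Lipschitz norm and tracking how the second-largest eigenvalue behaves as $\xi\to 0$. This requires a careful analytic perturbation argument and a Lipschitz-norm version of the usual $C^1$ spectral-gap estimates; it is exactly the content of \cite[Lemma~11]{GSR}, and the constants $1/2$ and $6$ appearing in the bounds are artefacts of that quantitative analysis rather than of the heuristic above.
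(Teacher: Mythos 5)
The paper does not actually prove this proposition; it simply cites \cite[Lemma~11 and Section~5]{GSR}, so the comparison is to that external reference. Your Fourier-decomposition/twisted-transfer-operator strategy is a natural route and is very plausibly the mechanism underlying the \emph{lower} bound $p\geq 1/(2\gamma(\theta))$ in \cite{GSR}. Two technical remarks there: a Lipschitz function has Fourier coefficients decaying only like $|k|^{-1}$ (not $|k|^{-2}$, so the $|k|^{-2}$ you invoke must come from the \emph{product} $\hat\varphi_k\overline{\hat\psi_k}$, and in any case a cleaner route is to truncate the $t$-Fourier series with Fej\'er kernels at a cutoff $K(n)$ and balance the two errors), and one needs the bound $1-|\lambda(\xi)|\gtrsim\|\xi\|^2$ \emph{uniformly} in $\xi\in(0,1)$ rather than merely near $0$, which requires an extra (if standard) compactness argument.

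The serious discrepancy is in the upper bound. Your direct construction with $\varphi=\psi=e^{2\pi iq_j t}$ and $n\sim q_j^{2\gamma-\varepsilon}$ (note the sign: you need $n$ slightly \emph{below} $q_j^{2\gamma}$ so that $|\lambda(q_j\theta)|^n$ stays bounded away from zero), if made rigorous, yields $p\leq 1/\gamma(\theta)$. This is \emph{strictly stronger} than the stated bound $p\leq 6/(\max(2,\gamma(\theta))-2)$, since $1/\gamma<6/(\gamma-2)$ whenever $\gamma>2$, and it is non-vacuous even when $\gamma\leq 2$. Hence your closing claim that the factor $6/(\max(2,\gamma)-2)$ arises from ``a more careful optimisation'' in \cite{GSR} cannot be right: a more careful optimisation would give a tighter bound, not a looser one. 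The actual bound in \cite{GSR} almost certainly comes from an entirely different, indirect argument: one shows that superpolynomial or polynomial decay of correlations with exponent $p$ forces the upper hitting-time indicator $\overline H(x,y)$ to lie within roughly $C/p$ of the local dimension ($=2$ here), and then uses the already-computed identity $\overline H=\gamma(\theta)$ for these skew products (Lemma~\ref{KS...} combined with the fibre structure) to conclude $\gamma-2\leq 6/p$. So on the upper bound you have proposed a genuinely different --- and, modulo the perturbative estimates, sharper --- argument than the one the paper is citing, but your explanation of why the constants differ is incorrect.
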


Combining Propositions~\ref{prop:oscillating} and \ref{prop:poldecay},
we have found systems with polynomial decay of correlations, for which
Birkhoff sums show the same strongly oscillating behaviour as in the
case of rotations. This result establishes the first result stated in
Theorem~\ref{the:thereareoscillations}.

\subsection{Mixing systems with slowly increasing time averages\label{slow}}

In this section we see examples where $\limsup_{n\rightarrow \infty }
\frac{\log S_{n}^{\psi }(x)}{\log n}$ is bounded from above by the
arithmetical properties of the system and can have a very slow increase. To construct these
examples we consider two dimensional rotations with suitable angles.

Let us consider a rotation $f_{\mathbf{\theta }}$ of the torus
$\mathbb{T}^{2}\cong \mathbb{R}^{2}/\mathbb{Z}^{2}$ by an angle with
components $(\theta, \theta ^{\prime })$. Suppose that $(\gamma
,\gamma ^{\prime })$ are respectively the types of $\theta $ and
$\theta ^{\prime }$.  Denote by $q_{n}$ and $q_{n}^{\prime }$ the
partial convergent denominators of $\theta $ and $\theta ^{\prime }$.
Let us consider $\xi >1$ and let $Y_{\xi }\subset \mathbb{R}^{2}$ be
the class of couples of irrationals $(\theta ,\theta^{\prime})$ given
by the following conditions on their convergents to be satisfied
eventually
\begin{align*}
  q_{n}^{\prime } & \geq q_{n}^{\xi },\\
  q_{n+1} & \geq q_{n}^{\prime }{}^{\xi }.
\end{align*}
The set $Y_{\xi }$ is uncountable, dense in $[0,1]\times \lbrack 0,1]$
  and there are points in $Y_{\xi }$ having finite Diophantine type
  coordinates.  If we take angles in $Y_{\xi }$ the lower hitting
  time indicator is bounded from below by $\xi $. In
  \cite[Section~6]{GSR} the following is proved.

\begin{prop} \label{exest}
  Consider the class of skew products $F\colon[0,1]\times
  S^{1}\times S^{1}\rightarrow \lbrack 0,1]\times S^{1}\times S^{1}$
  defined by
  \[
    F(x,t)=(T(x),t+\mathbf{\theta } \eta (x)),
  \]
  where $\mathbf{\theta} \in Y_{\xi }$,
  \[
    T(x)=2x\mod 1,
  \]
  and $\eta =1_{[\frac{1}{2},1]}$ is the characteristic function of
  the interval $[\frac{1}{2},1]$. For each $y\in \lbrack 0,1]\times
  S^{1}\times S^{1}$ it holds
  \[
  \underline{H}(x,y)\geq \max (3,\xi )
  \]
  for a.e.\ $x$. Furthermore, there are infinitely many
  $\mathbf{\theta} \in Y_{4}$ such that $F$ is polynomially mixing
  with respect to Lipschitz observables.
\end{prop}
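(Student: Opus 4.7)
The plan is to handle the two assertions of Proposition~\ref{exest} separately: the hitting-time lower bound exploits the Diophantine structure on the toral fibre, while the polynomial mixing assertion reduces to the construction framework developed in \cite{GSR}.

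For $\underline{H}(x,y)\geq \max(3,\xi)$, I would first invoke Lemma~\ref{prop:htll} to translate the claim into the upper bound $\limsup_n \frac{-\log d_n(x,y)}{\log n}\leq 1/\max(3,\xi)$, where $d_n(x,y)=\min_{0\leq i\leq n}d(F^i(x),y)$. Writing $x=(x_1,\mathbf{t})$ and decomposing $F^i(x_1,\mathbf{t})=(T^i x_1,\mathbf{t}+S_i^\eta(x_1)\bar\theta)$, Birkhoff's theorem gives $S_i^\eta(x_1)=i/2+o(i)$ almost surely, so $d(F^i(x),y)$ is bounded from below by the toral distance $\|\mathbf{t}+k\bar\theta-\bar y\|_{T^2}$ with $k=S_i^\eta(x_1)\leq n/2+o(n)$. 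The generic bound $\underline{H}\geq 3$ then follows from a first Borel--Cantelli estimate: since $F$ preserves three-dimensional Lebesgue measure, balls of radius $n^{-1/3-\varepsilon}$ around $y$ have measure $\sim n^{-1-3\varepsilon}$, and summing over a geometric subsequence of radii shows that $\mu$-a.e. $x$ eventually avoids these balls.

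The Diophantine bound $\underline{H}\geq\xi$ is the heart of the argument. For fixed $\bar v\in T^2$ and large $N$, pick $n$ with $q_n\leq N<q_{n+1}$ and split into two subregimes. If $N<q_n'$, the best-approximation inequality $\|k\theta'\|\geq 1/(2q_n')$ for all $0<k<q_n'$, combined with the relations $q_n'\geq q_n^\xi$ and $q_n'\leq q_{n+1}^{1/\xi}\leq N^{1/\xi}$ (the latter derived from $q_{n+1}\geq (q_n')^\xi$), gives $\|k\theta'\|\gtrsim N^{-1/\xi}$ for every $k\leq N$. If instead $q_n'\leq N<q_{n+1}$, a symmetric argument in the $\theta$-coordinate uses $q_{n+1}\geq (q_n')^\xi$ directly. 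To lift the homogeneous estimate to an inhomogeneous minimum $\min_{k\leq N}\|k\bar\theta-\bar v\|_{T^2}$, I would subtract the single $k^\ast\leq N$ that best approximates $\bar v$ in the rotation orbit and bound the residual by the three-distance theorem, yielding $\min_{k\leq N}\|k\bar\theta-\bar v\|\gtrsim N^{-1/\xi}$ up to polylogarithmic factors. Translating via Lemma~\ref{prop:htll} and combining with the generic bound delivers $\underline{H}(x,y)\geq \max(3,\xi)$.

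For the second assertion, I would invoke the construction already implicit in \cite[Section~6]{GSR}: the two growth conditions defining $Y_4$ leave infinitely many free parameters in the sequences $(q_n)$ and $(q_n')$, so a Liouville-type inductive construction produces infinitely many $(\theta,\theta')\in Y_4$ with prescribed convergents. For each such pair the polynomial decay of correlations of the skew product \eqref{skewprod} on Lipschitz observables is furnished by Proposition~\ref{prop:poldecay}, whose exponent depends only on the type $\gamma(\theta)$ (finite, but possibly large, under the prescribed growth). The main obstacle is the Diophantine step converting the homogeneous bound $\|k\theta'\|\gtrsim N^{-1/\xi}$ into an inhomogeneous one $\|k\theta'-v_3\|\gtrsim N^{-1/\xi}$ uniformly in $v_3$: in the Liouville-like regime forced by $Y_\xi$ the orbit can accidentally approximate specific targets well, and one must track the interplay between the two convergent sequences carefully to rule this out.
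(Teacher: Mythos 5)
The paper does not prove Proposition~\ref{exest} internally; it states it verbatim as a citation from \cite[Section~6]{GSR}, so there is no in-text argument to compare against. Judged on its own, your sketch correctly identifies the two structural pieces (a Borel--Cantelli argument giving the dimension bound $\underline{H}\geq 3$, and a Diophantine argument for $\underline{H}\geq\xi$ using Lemma~\ref{prop:htll} to pass to $d_n$), but the Diophantine part as written does not work.

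Concretely, the chain of inequalities $q_n' \leq q_{n+1}^{1/\xi} \leq N^{1/\xi}$ is inconsistent with your own set-up: you pick $n$ with $q_n \leq N < q_{n+1}$, so $q_{n+1}^{1/\xi} > N^{1/\xi}$, and the second link fails. Worse, in the subcase $N < q_n'$ the best-approximation bound $\|k\theta'\|\geq 1/(2q_n')$ is useless, because the defining inequalities of $Y_\xi$ only bound $q_n'$ from \emph{below} (by $q_n^\xi$), so $1/(2q_n')$ may be as small as $q_n^{-\xi}\leq N^{-\xi}$, far below the target $N^{-1/\xi}$. The interleaving $q_n<q_n'<q_{n+1}$ imposed by $Y_\xi$ has to be used in the transverse way: when $k\leq N$ is close to a multiple of some $q_m$ (so $\|k\theta\|$ is forced small), the denominators $q_m'$ of $\theta'$ are far away, which prevents $\|k\theta'\|$ from simultaneously being small; making this precise requires running the two continued-fraction expansions in parallel, not a case split on where $N$ falls relative to a single $q_n'$.

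The inhomogeneous step is the real crux and you correctly flag it, but the proposed remedy (subtract a best-approximating $k^\ast$, invoke the three-distance theorem) is not an argument. Note also that uniformity in $\bar v$ is stronger than what the statement requires: the starting point $\bar v=(t_1-y_2,t_2-y_3)$ varies over Lebesgue-a.e.\ choices, so a Borel--Cantelli over $\bar v$ suffices; but a naive measure estimate of the escape set $\{\bar v : \min_{k\leq N}\|\bar v+k\bar\theta\|_{T^2}<r\}$ only yields the ambient-dimension bound, not $\xi$. To extract the arithmetic exponent one needs an explicit description of that escape set as a union of boxes whose geometry reflects both sequences $(q_n)$ and $(q_n')$; this is precisely what \cite[Section~6]{GSR} carries out, and it is not a short computation. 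The polynomial-mixing assertion is, in both your proposal and the paper, simply a reference back to the same source.

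Finally, a minor point on the generic bound: your Borel--Cantelli argument for $\underline{H}\geq3$ is right in spirit, but as phrased (``eventually avoids these balls'') it does not quite control the hitting time. You want to bound $\mu\{x:\tau_{r_k}(x,y)\leq r_k^{-(3-\delta)}\}\lesssim r_k^{\delta}$ along a geometric sequence $r_k$, sum, and then fill in intermediate radii by monotonicity of $\tau_r$; the version you wrote conflates the shrinking-target event with the hitting-time event.
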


In \cite{G2}, the following is proved.

\begin{prop}[{\cite[Theorem~11]{G2}}]
  Consider a dynamical system $(f,X,\mu)$ where $X$ is a metric space
  and $f$ a Borel map. Let $\tilde{x}\in X$ and let us consider the
  observable $\phi (x)=d(x,\tilde{x})^{-k}$, where $k \geq 0$. Let
  $S_{n}^{\phi }(x)$ be the usual Birkhoff sum. Then it holds for
  $\mu$-a.e.\ $x\in X$
  \begin{equation*}
    \limsup_{n\rightarrow \infty }\frac{\log S_{n}^{\phi }(x)}{\log
      n}\leq \frac{ k}{\underline{H}(x,\tilde{x})}+1.
  \end{equation*}
\end{prop}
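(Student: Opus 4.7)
The plan is to reduce the Birkhoff sum to the closest-approach sequence $d_n(x,\tilde{x}) := \min_{0 \leq j < n} d(f^j(x),\tilde{x})$ and then apply the characterization of $\underline{H}$ given in Lemma~\ref{prop:htll}. The starting point is the trivial bound: since $\phi(y) = d(y,\tilde{x})^{-k}$ is monotone decreasing in $d(y,\tilde{x})$ (because $k \geq 0$), we have
\[
S_n^\phi(x) \;=\; \sum_{j=0}^{n-1} d(f^j(x),\tilde{x})^{-k} \;\leq\; n \cdot \max_{0 \leq j < n} d(f^j(x),\tilde{x})^{-k} \;=\; n \cdot d_n(x,\tilde{x})^{-k}.
\]

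Taking logarithms, dividing by $\log n$, and passing to the $\limsup$ as $n \to \infty$ gives
\[
\limsup_{n\to\infty} \frac{\log S_n^\phi(x)}{\log n} \;\leq\; 1 + k \cdot \limsup_{n\to\infty} \frac{-\log d_n(x,\tilde{x})}{\log n}.
\]
Lemma~\ref{prop:htll} (the elementary identification of hitting-time indicators with closest-approach exponents) asserts that the $\limsup$ on the right equals $1/\underline{H}(x,\tilde{x})$ for every $x \in X$, and substituting this yields
\[
\limsup_{n\to\infty} \frac{\log S_n^\phi(x)}{\log n} \;\leq\; 1 + \frac{k}{\underline{H}(x,\tilde{x})},
\]
which is the desired bound.

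I expect no real obstacle here: the estimate does not invoke mixing, ergodicity, or integrability of $\mu$, nor any regularity of $\phi$ beyond the explicit distance form; it only uses the pointwise identity $S_n \leq n \max$ together with the already-established link between $\underline{H}(x,\tilde{x})$ and the asymptotics of $d_n(x,\tilde{x})$. In fact the statement even holds for \emph{every} $x \in X$, so restricting to a $\mu$-full-measure set is automatic. The one mildly nontrivial input is Lemma~\ref{prop:htll} itself, but that is quoted from \cite{G4} and stated earlier in the excerpt, so it may be used as a black box.
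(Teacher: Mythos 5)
Your proof is correct. The paper does not reprove this statement (it is quoted directly from \cite{G2}), but your argument --- bounding $S_n^\phi(x) \leq n\, d_n(x,\tilde{x})^{-k}$, taking logarithms, and invoking Lemma~\ref{prop:htll} to identify $\limsup_n \frac{-\log d_n(x,\tilde{x})}{\log n}$ with $1/\underline{H}(x,\tilde{x})$ --- is exactly the same elementary mechanism the paper itself uses in the proof of Proposition~\ref{ooo} (where $d_n(x,0)^{-\beta} \leq S_n^\psi(x) \leq n\, d_n(x,0)^{-\beta}$ is combined with Lemmas~\ref{KS...} and~\ref{prop:htll}). Your observation that the bound is genuinely pointwise, requiring no hypothesis on $\mu$ or on the dynamics, is also correct; the ``$\mu$-a.e.'' in the statement is merely inherited from the context in which it is applied.
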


By this we easily get the following proposition.

\begin{prop}
  Consider the system as described in Proposition~\ref{exest}. Then
  \begin{equation*}
    \limsup_{n\rightarrow \infty }\frac{\log S_{n}^{\phi }(x)}{\log
      n}\leq \frac{k}{\max (3,\xi )}+1.
  \end{equation*}
\end{prop}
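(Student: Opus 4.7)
The plan is to combine the two propositions immediately preceding the statement: the lower bound on the lower hitting time indicator $\underline{H}(x,y)$ given by Proposition~\ref{exest}, and the upper bound on the Birkhoff sum growth in terms of $\underline{H}$ given by \cite[Theorem~11]{G2} (the proposition just above). No further dynamical input is needed.

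First, I apply Proposition~\ref{exest} to the skew product $F$ with $\mathbf{\theta}\in Y_{\xi}$: for every reference point $y=\tilde{x}\in [0,1]\times S^1\times S^1$ and for Lebesgue-a.e.\ $x$ one has
\[
\underline{H}(x,\tilde{x})\geq \max(3,\xi).
\]
In particular this holds for the specific $\tilde{x}$ that enters the observable $\phi(x)=d(x,\tilde{x})^{-k}$.

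Next, since $F$ is a Borel map on a metric space and preserves Lebesgue measure, the hypotheses of \cite[Theorem~11]{G2} (recalled just before the statement) are satisfied, so for $\mu$-a.e.\ $x$
\[
\limsup_{n\rightarrow\infty}\frac{\log S_n^{\phi}(x)}{\log n}\leq \frac{k}{\underline{H}(x,\tilde{x})}+1.
\]
Combining this inequality with the lower bound $\underline{H}(x,\tilde{x})\geq\max(3,\xi)$ from the previous step yields
\[
\limsup_{n\rightarrow\infty}\frac{\log S_n^{\phi}(x)}{\log n}\leq \frac{k}{\max(3,\xi)}+1,
\]
which is the claimed estimate. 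Both bounds hold on a set of full Lebesgue (hence $\mu$-) measure, so the intersection is still of full measure, and the proof is complete.

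There is essentially no obstacle beyond checking that the two quoted results apply verbatim; the only thing to be mindful of is that the $\underline{H}$ bound from Proposition~\ref{exest} is stated for every target $y$, so we are free to specialise it to the point $\tilde{x}$ used to define $\phi$, and that the observable $\phi(x)=d(x,\tilde{x})^{-k}$ has the exact form required by \cite[Theorem~11]{G2}.
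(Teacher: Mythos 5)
Your proof is correct and is essentially identical to the paper's (the paper simply says ``By this we easily get the following proposition'' after stating Proposition~\ref{exest} and the quoted result from \cite{G2}). You correctly identify that one specialises the $\underline{H}$ lower bound to the observable's base point $\tilde{x}$, applies \cite[Theorem~11]{G2}, and intersects the two full-measure sets.
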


Then in these kind of systems when $\xi $ is large, the growth of
Birkhoff sums can be slow even for large $k$ (while the dimension of
the invariant measure is $3$ for each choice of $\mathbf{\theta }$).
We remark that already when $\xi =4$, if $k>12$, then we can find systems having power law decay of correlations and for which
\begin{equation*}
\limsup_{n\rightarrow \infty }\frac{\log S_{n}(x)}{\log n}<\frac{k}{d_{\mu
}(\tilde{x})}
\end{equation*}
as stated in the second part of Theorem~\ref{the:thereareoscillations}.

\section{Borel--Cantelli results for infinite systems modelled by Young towers} \label{sec:youngtowers}
In this final section we consider application of
Theorem~\ref{thm:infiniteSBC} (Section~\ref{BC}) to infinite systems
$(f,X,\mu)$ modelled by Young towers. To keep the exposition simple,
we focus on dynamical Borel--Cantelli results for sequences of nested
balls $(B_n)$, i.e.\ $B_k\subset B_l$ if $k>l$. Based on the results
of Section~\ref{sec:mainresults} this theory can be easily
generalised, e.g.\ in finding almost sure bounds for the growth of
$M_n$ and $S_n$.  For dynamical Borel--Cantelli results for systems
preserving a probability measure, see \cite{GNO}.

We describe the Young tower as follows (compare to the suspension
construction outlined in Section~\ref{extremes}). Consider a measure
preserving (one-dimensional) system $(f,X,\mu)$. Suppose there exists
a subinterval $\Lambda \subset X$, and a countable partition
$\{\Lambda_{i}\}_{i\in\mathbb{N}}$ of $\Lambda$ into sub-intervals,
together with a function $R\colon \Lambda\to\mathbb{N}$ defined by
$R|_{\Lambda_i}=R_i$ if $f^{R_i} \colon \Lambda_i \to \Lambda$ is a
bijection. The set $\Delta$ (denoting the Young tower) is defined by
\[
\Delta = \bigcup_{k=1}^{\infty}\bigcup_{\ell=0}^{R_k}\Delta_{k,\ell},
\]
where $\Delta_{k,\ell}:= \Lambda_k \times \{\ell\}$ is a subset of
\emph{level} $\ell$. The set $\Delta_0 = \bigcup_k \Delta_{k,0}$ is
identified with $\Lambda$.
The tower map $F\colon \Delta\to\Delta$ is given by
\begin{equation*} 
  F(x,\ell)= \left\{ \begin{array}{ll} (x,\ell+1) & \textrm{if
      $x\in\Lambda_i$ and $\ell<R_i-1$},\\ (f^{R_i}(x),0) & \textrm{if
      $x\in\Lambda_i$ and $\ell=R_i-1$}. \end{array} \right.
\end{equation*}
From this construction, the base map $\hat{F}:=F^R \colon
\Lambda\to\Lambda$ is a Gibbs--Markov map, and admits an ergodic
absolutely continuous invariant measure $\hat{\nu}$, uniformly
equivalent to Lebesgue measure, in the sense that there exists a $C>0$
such that
\[
\frac{1}{C}\leq \frac{d\hat{\nu}}{dm}\leq C.
\]
The measure $\hat{\nu}$ lifts to an invariant measure $\nu$ for $F$ on
$\Delta$ by defining
\[
\nu(A)=\hat{\nu}(F^{-\ell}(A)),
\]
for $A\subset\Delta_{k,\ell}$. In particular for a general set
$A\subset\Delta$ we have
\[
\nu(A)=\sum_{\ell}\hat{\nu}\left(\{R>\ell\}\cap
F^{-\ell}{A}\right).
\]
In the case where $\left< R \right>=\int_{\Lambda}R\,
d\hat{\nu}<\infty$, we can normalise $\nu$ to a probability
measure. The system $(F,\Delta,\nu)$ forms a Markov extension of
$(f,X,\mu)$. In particular there exists a (surjective) factor map
$\pi \colon \Delta \to X$ satisfying $\pi\circ F=f\circ
\pi$, and for $(x,\ell)\in\Delta$ we have explicitly
$\pi(x,\ell)=f^{\ell}(x)$. The measure $\mu=\pi_{*}\nu$ is $f$-invariant, and
is a probability measure in the case where $\nu$ is a probability
measure. Here, for $A\subset X$ we have
$\pi_*\nu(A)=\nu(\pi^{-1}(A)).$ Under further regularity assumptions
on the system $(F,\Delta, \nu)$ the measures $\nu$ and $\mu$ can be shown
to be absolutely continuous with respect to $m$, see \cite{Young, Young2}.

In the present situation we consider the case $\left<R\right>=\infty$,
and hence $\nu(\Delta)=\infty$. Consider now
$(f,X,\mu)$ ergodic, conservative and that $\mu$ is
absolutely continuous with respect to Lebesgue measure. We
can again build the Young tower $(F,\Delta,\nu)$ over
$(f,X,\mu)$. However, unlike the case
$\left<R\right><\infty$, the measure $\mu':=\pi_{*}\nu$ need not be
$\sigma$-finite unless additional conditions are satisfied on the
return time function $R$, see \cite{BNT}. We assume that $\pi_{*}\nu$
is $\sigma$-finite.
We have the following result.

\begin{thm}\label{thm:tower}
  Suppose that $(f,X,\mu)$ is an ergodic, conservative measure
  preserving map on an interval $X\subset\mathbb{R}$, and furthermore
  $(f,X,\mu)$ is modelled by a (one-dimensional) Young tower $(F,\Delta,\nu)$, with
  return time function satisfying $\hat\nu\{R=n\}\sim n^{-\beta-1}$,
  for some $\beta\in(0,1)$, and that the measure $\mu=\pi_{*}\nu$ is
  $\sigma$-finite. Then there exists a set $X'$ with
  $\mu(X\setminus X')=0$ with the following property. If $\{B_n\}$ and is a nested
  sequence of intervals with $\cap_n B_n=\{\tilde{x}\}$, $\tilde{x}\in X'$,
and $\sum_{k=1}^\infty \mu(B_{k^{\alpha+\varepsilon_1}}) = \infty$
(for some $\varepsilon_1>0$), then for all $\epsilon\in(0,\varepsilon_1]$
  \begin{equation}\label{eq.quant-tower}
    \liminf_{n\to\infty}\frac{\sum_{k=1}^{n} 1_{B_k} (f^k
      (x))}{\sum_{k=1}^{n^{\frac{1}{\alpha+\varepsilon}} }
      \mu(B_{k^{\alpha+\varepsilon}})} \geq 1
  \end{equation}
  for $\mu$-a.e.\ $x$. Here $\alpha=1/\beta$.
\end{thm}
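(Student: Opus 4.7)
The strategy is to lift the problem from $(f,X,\mu)$ to the Young tower $(F,\Delta,\nu)$, reduce it to a strong Borel--Cantelli question on the Gibbs--Markov base $(\hat F,\Lambda,\hat\nu)$, and then apply the proof of Theorem~\ref{thm:infiniteSBC} essentially verbatim.

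First, for $\mu$-a.e.\ $\tilde x\in X$, the preimage $\pi^{-1}(\tilde x)$ is a countable subset of $\Delta$. I would take $X'$ to be the full-$\mu$-measure set of $\tilde x$ admitting a preimage $(\tilde z,\ell_0)$ lying in the interior of some cell $\Lambda_{i_0}\times\{\ell_0\}$, with the density of $\hat\nu$ bounded away from $0$ and $\infty$ near $\tilde z$. For all sufficiently large $n$, the connected component $\hat B_n$ of $\pi^{-1}(B_n)$ containing $(\tilde z,\ell_0)$ sits inside $\Lambda_{i_0}\times\{\ell_0\}$ and satisfies $\nu(\hat B_n)\asymp\mu(B_n)$ by the uniform equivalence of $\hat\nu$ with Lebesgue measure. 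Since $\mu$-a.e.\ orbit of $f$ visits $\Lambda$, it suffices to bound from below the sum along orbits starting from some $y=(z,0)\in\Delta_0$ with $\pi(y)=x$, using the pointwise inequality $1_{B_k}(f^k(\pi y))\geq 1_{\hat B_k}(F^k y)$.

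The key observation is that for $y=(z,0)$, the tower orbit $F^k(y)$ visits $\Lambda_{i_0}\times\{\ell_0\}$ exactly at the times $k=S^R_m(z)+\ell_0$ with $\hat F^m(z)\in\Lambda_{i_0}$ and $R(\hat F^m z)>\ell_0$. Writing $P(\hat B_j):=\{z\in\Lambda_{i_0}:(z,\ell_0)\in\hat B_j\}$, this gives
\[
\sum_{k=1}^{n}1_{\hat B_k}(F^k y)\;=\;\sum_{m\,:\,S^R_m(z)+\ell_0\leq n} 1_{P(\hat B_{S^R_m(z)+\ell_0})}(\hat F^m z),
\]
which reduces the problem to a strong Borel--Cantelli lower bound for the Gibbs--Markov map $\hat F$ on $\Lambda$ with observables $q_m:=1_{P(\hat B_{k(m)})}$. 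These observables are monotone because $\{B_n\}$ is nested, and their $\hat\nu$-measures still decay polynomially in $m$ after reindexing. At this point the argument of Theorem~\ref{thm:infiniteSBC} applies verbatim: Proposition~\ref{prop:SnR} relates clock time $n$ to induction time $m$ via $S^R_m(z)\sim m^{1/\beta}$ up to logarithmic corrections, and the SBC property of Gibbs--Markov maps on sequences of shrinking intervals (see \cite{Kim,GNO,Gouezel}) furnishes the desired lower bound, which then translates back to \eqref{eq.quant-tower} via the asymptotic relation $n\sim S^R_M(z)$.

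The main obstacles are technical: one must verify that the reindexed sequence $\{P(\hat B_{k(m)})\}$ inherits the divergence hypothesis $\sum_m\hat\nu(q_{m^{\alpha+\varepsilon_1}})=\infty$ from the original sequence $\{B_n\}$, and confirm that the additive shift by $\ell_0$ and the restriction $R(\hat F^m z)>\ell_0$ are absorbed by the $\varepsilon$-slack in the exponents. Both points follow from the uniform estimate $\nu(\hat B_n)\asymp\mu(B_n)$ together with the assumed divergence for $B_n$, but they demand careful bookkeeping because the reindexing is nonlinear. The case where $\tilde x$ admits several lifts in $\Delta$ only strengthens the lower bound, so working with a single well-chosen lift suffices.
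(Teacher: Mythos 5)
Your strategy—lift to the tower, restrict to a single preimage cell, reduce to a Gibbs--Markov Borel--Cantelli problem, and rescale using Proposition~\ref{prop:SnR}—overlaps in spirit with the paper's argument, but it contains a genuine gap that defeats the sharp constant~$1$ in the conclusion. The issue is the passage from ``one preimage component'' to the full count. The point $\tilde x$ typically admits infinitely many lifts $\tilde x_{k,\ell}\in\Delta_{k,\ell}$, and
\[
\mu(B_n)=\nu\bigl(\pi^{-1}(B_n)\bigr)=\sum_{k,\ell}\nu\bigl(\pi^{-1}(B_n)\cap\Delta_{k,\ell}\bigr),
\]
a sum over countably many components. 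Your $\hat B_n$ is \emph{one} of these, so the honest estimate is $\nu(\hat B_n)\leq\mu(B_n)$, and (as you acknowledge) one only gets $\nu(\hat B_n)\asymp\mu(B_n)$ with some constant $c<1$ that depends on $\tilde x$ and on the chosen cell $(i_0,\ell_0)$. Feeding $\nu(\hat B_k)$ into the SBC machinery yields
\[
\liminf_{n\to\infty}\frac{\sum_{k\leq n}1_{B_k}(f^k x)}{\sum_{k\leq n^{1/(\alpha+\varepsilon)}}\nu(\hat B_{k^{\alpha+\varepsilon}})}\geq 1,
\]
and since the true denominator $\sum\mu(B_{k^{\alpha+\varepsilon}})$ is larger by a factor $\asymp c^{-1}$, you can only deduce a bound $\liminf\geq c$, not $\liminf\geq 1$. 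Your closing remark---that multiple lifts would only strengthen the single-lift bound---is in fact inverted: the additional lifts represent visits your single-cell count is \emph{missing}, so ignoring them is what weakens the estimate.

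The paper resolves exactly this difficulty by truncating the tower. For each $\delta>0$ one chooses a finite collection of cells $\Delta_{N(\delta)}$ capturing at least a $(1-\delta)$-fraction of $\mu(B_n)$ for all large $n$, applies the Borel--Cantelli argument of Theorem~\ref{thm:infiniteSBC} to each of the \emph{finitely many} relevant preimage components (so the constants and the ``eventually in $n$'' thresholds can be taken uniformly), sums the contributions, and lets $\delta\to 0$. To repair your proof you would need to incorporate this truncation step: work simultaneously with the components inside a finite truncated tower, not a single cell. The rest of your outline (using $1_{B_k}(f^k\pi y)\geq 1_{\hat B_k}(F^k y)$, the time-change $n\sim S^R_m(z)\sim m^{1/\beta}$ from Proposition~\ref{prop:SnR}, the monotonicity of the reindexed targets, and the $\varepsilon$-slack absorbing the additive $\ell_0$-shift) is sound once this multi-component bookkeeping is in place.
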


Before proving Theorem~\ref{thm:tower} we make the following remarks.
First of all, the set $X'$ consists of points where the density of $\mu$ is
bounded away from $\{0,\infty\}$, and where $\pi^{-1}(\tilde{x})$ is non-empty and
consists only of interior points within each $\Delta_{k,\ell}$.
The partition sets $\Delta_{k,\ell}$ here are identified with intervals in $X$ under
the projection $\pi$.
For more general Young tower constructions, even in one dimension, it is possible
for the sets $\Delta_{k,\ell}$
to have more general (Cantor set) geometries. We do not consider these latter situations.

In the case $\left<R\right><\infty$, a corresponding theorem is
established in \cite{GNO}, where a \emph{dense Borel--Cantelli}
property is achieved (i.e.\ a lower bound on the $\liminf$ analogous
to equation \eqref{eq.quant-tower}). Their result does not require
the assumption $\hat\nu\{R=n\}\sim n^{-\beta-1}$. In the
case $\left<R\right>=\infty$, the asymptotics of $R$ play a role in
the statement on the Borel--Cantelli result via the constant $\alpha=1/\beta$.

Relative to equation \eqref{eq.quant} in Theorem~\ref{thm:infiniteSBC},
equation \eqref{eq.quant-tower} only gives an almost sure lower bound
on $\sum_{k=1}^{n} 1_{B_k} (f^k(x))$. (This lower bound can be further refined
following the remarks given immediately after Theorem~\ref{thm:infiniteSBC}).
For upper bounds, the issue arising here is that
$\pi^{-1}(B_n)$ can have non-empty intersection with a countably infinite number
of sets of the form $\Delta_{k,\ell}$, and dynamical Borel--Cantelli results do not in general
carry over to countable unions of (shrinking target) sets.
However, in certain situations it is
possible to re-arrange the Young Tower so that
$\pi^{-1}(B_n)$ is contained in a finite number of $\Delta_{k,\ell}$,
thus bypassing the problem. Indeed it is not difficult to show that
such a tower can be constructed for the intermittent map family
of maps \eqref{eq.LSVmap}
given in Section~\ref{sec.intermittent}. This would apply in the
case where the sets $(B_n)$ do not accumulate at the neutral fixed
point $\tilde{x}=0$. To see how to construct such a tower, see the proof of Theorem~\ref{thm:maximaint}
and Corollary~\ref{cor.BC}.  We state the following result.

\begin{cor}\label{cor.towerSBC}
  Under the assumptions of Theorem~\ref{thm:tower}, suppose that for
  all $n$ sufficiently large we have $\pi^{-1}(B_n)$ contained in a
  finite number of $\Delta_{k,\ell}$. Then for $\mu$-a.e.\ $x\in X$,
 and all $\epsilon\in(0,\varepsilon_1]$ we have eventually in $n$ (as
$n\to\infty)$
  \begin{equation*}
    \sum_{k=1}^{n^{\frac{1}{\alpha+\varepsilon}}
    }\mu(B_{k^{\alpha+\varepsilon}}) \leq \sum_{k=1}^{n} 1_{B_n}
    (f^k(x)) \leq \sum_{k=1}^{n^{\frac{1}{\alpha-\varepsilon}}
    }\mu(B_{k^{\alpha-\varepsilon}}).
  \end{equation*}
\end{cor}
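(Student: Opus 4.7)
The plan is to reduce Corollary~\ref{cor.towerSBC} to Theorem~\ref{thm:infiniteSBC} by enlarging the tower base until the targets $B_n$ sit inside the new inducing set. The lower bound
$\sum_{k=1}^{n}1_{B_k}(f^k(x))\geq \sum_{k=1}^{n^{1/(\alpha+\varepsilon)}}\mu(B_{k^{\alpha+\varepsilon}})$
is already provided by Theorem~\ref{thm:tower}, so the real content is the matching upper bound, which is exactly what the hypothesis on finite containment of $\pi^{-1}(B_n)$ makes available.

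First I would pick a finite collection $\mathcal{F}=\{(k_i,\ell_i)\}_{i=1}^{N}$ of tower cells such that $\pi^{-1}(B_n)\subset \bigcup_{(k,\ell)\in\mathcal{F}}\Delta_{k,\ell}$ for all $n$ large enough, and form the enlarged base $\tilde\Lambda:=\Lambda\cup\bigcup_{(k,\ell)\in\mathcal{F}}\Delta_{k,\ell}$ with projected set $\tilde Y:=\pi(\tilde\Lambda)\subset X$. Let $\tilde R\colon \tilde Y\to\mathbb{N}$ denote the first return time of $f$ to $\tilde Y$ and put $f_{\tilde Y}=f^{\tilde R}$. Because only finitely many levels are adjoined, the countable Markov partition of $\Lambda$ refines to a countable Markov partition of $\tilde Y$, while uniform expansion and bounded distortion on $f^{R}$-cylinders pass through to $f^{\tilde R}$-cylinders. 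Hence $(f_{\tilde Y},\tilde Y,\mu_{\tilde Y})$ satisfies assumptions~\ref{G-M1}--\ref{G-M4}. This is entirely parallel to the enlarged-inducing-set construction used in the proof of Theorem~\ref{thm:maximaint} for the case $\tilde x\in(0,1/2)$.

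Next I would verify \ref{G-M5}--\ref{G-M6} for $\tilde R$. The return function $\tilde R$ is constant on each atom of the new Markov partition, giving \ref{G-M5}. For the tail, the set $\{\tilde R=n\}$ agrees with $\{R=n\}$ up to a finite collection of atoms on which $\tilde R$ takes only boundedly many values, so the asymptotic $\mu_{\tilde Y}\{\tilde R=n\}\sim n^{-\beta-1}$ is inherited from $\hat\nu\{R=n\}\sim n^{-\beta-1}$. Thus \ref{G-M6} holds with the \emph{same} $\beta$ and hence the same $\alpha=1/\beta$ as in the original tower.

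Finally I would apply Theorem~\ref{thm:infiniteSBC} with $p_n=1_{B_n}$. The supports eventually lie in $\tilde Y$; the nested assumption on $(B_n)$ gives $p_1\geq p_2\geq\cdots$; the hypothesis $\sum_n\mu(B_{n^{\alpha+\varepsilon_1}})=\infty$ supplies the summability assumption of item~(1); and the strong Borel--Cantelli property for nested balls under Gibbs--Markov maps is classical, see \cite{Kim, GNO}. Item~(1) of Theorem~\ref{thm:infiniteSBC} then delivers exactly the two-sided quantitative bound in the statement. The main obstacle is the verification that the enlarged first-return system is Gibbs--Markov with \emph{the same} tail exponent $\beta$: $\tilde R$ is not the original first return to $\Lambda$, and one must check that refining the base by finitely many towers neither destroys the Markov/expansion/distortion structure nor alters the tail asymptotic. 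Both reduce to the observation that only finitely many new return-time values are introduced, so the asymptotics are governed exclusively by the original tail $\hat\nu\{R=n\}\sim n^{-\beta-1}$.
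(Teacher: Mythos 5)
Your proposed route is genuinely different from the paper's. The paper does not re-induce: it stays on the tower $\Delta$ and re-uses the truncated-tower decomposition from the proof of Theorem~\ref{thm:tower}. For the upper bound it introduces the tail count $\hat T_{N(\delta)}(n,x)=\sharp\{\,j\leq n:F^j(x,0)\in(\Delta\setminus\Delta_{N(\delta)})\cap\pi^{-1}(B_n)\,\}$, observes $\hat S_\Delta(n,x)\leq\hat S_{\Delta_{N(\delta)}}(n,x)+\hat T_{N(\delta)}(n,x)$, and then uses the hypothesis that $\pi^{-1}(B_n)$ lives in finitely many cells to pick $N(\delta)$ large enough that $\hat T_{N(\delta)}\equiv 0$. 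The upper bound for $\hat S_{\Delta_{N(\delta)}}$ was already secured in the proof of Theorem~\ref{thm:tower}.

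Your re-inducing plan has a genuine gap, and you correctly flag where it is but then argue it away too quickly. The claim that $(f_{\tilde Y},\tilde Y,\mu_{\tilde Y})$ satisfies \ref{G-M1}--\ref{G-M4} "because only finitely many levels are adjoined" does not hold in the abstract tower setting. Three independent things can fail. First, $\tilde Y=\pi(\tilde\Lambda)=\Lambda\cup\bigcup_i f^{\ell_i}(\Lambda_{k_i})$ need not be an interval, and the images $f^{\ell_i}(\Lambda_{k_i})$ can overlap each other and $\Lambda$ in complicated ways, so \ref{G-M1} is not automatic. Second, and more seriously, the first-return map $f^{\tilde R}$ to $\tilde Y$ need not satisfy the uniform expansion condition \ref{G-M2}: the tower map $F$ contributes no expansion on the intermediate levels $\Delta_{k,\ell}$ with $0<\ell<R_k-1$ (it is the identity on the first coordinate there), and the original map $f$ itself is only assumed to be modelled by the tower — it can have neutral behaviour away from $\Lambda$. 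So $f^{\tilde R}$ can have atoms with short return time and derivative arbitrarily close to $1$. Third, the tower return $R$ in Section~\ref{sec:youngtowers} is not assumed to be a first return to $\Lambda$; intermediate images $f^j(\Lambda_k)$ ($0<j<R_k$) may land in $\tilde Y$ in a way the tower does not register, so the Markov structure of $f^{\tilde R}$ is not inherited from the tower partition. Your appeal to the proof of Theorem~\ref{thm:maximaint} is an analogy to a concrete map where all three issues can be checked by hand; it is not a general argument. The paper's truncated-tower argument sidesteps all of this by never constructing a new Gibbs--Markov induced system.
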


We now prove Theorem~\ref{thm:tower}, and Corollary~\ref{cor.towerSBC}.

\begin{proof}[Proof of Theorem~\ref{thm:tower}]
  For $n>0$, consider the sets $\pi^{-1}(B_n)$ on the tower
  $\Delta$.  In general, $\pi^{-1}(\tilde{x})\subset\Delta$ is
  a countably infinite, and for each $\Delta_{k,\ell}$, the set
  $\pi^{-1}(\tilde{x})\cap\Delta_{k,\ell}$ contains at most one point
  since $F^\ell$ is one-to-one on $\Delta_{k,\ell}$ for $\ell \leq R_k$. Let
  $\tilde{x}_{k,\ell}$ be defined by
  $\tilde{x}_{k,\ell}=\pi^{-1}(\tilde{x})\cap\Delta_{k,\ell}$ whenever
  this intersection is nonempty.
  We let $\hat{x}_{k,\ell}\in\Delta_{k,0}$ be the point on the base
  such that $F^{\ell}(\hat{x}_{k,\ell})=\tilde{x}_{k,\ell}$.

  In the following, it suffices to work with a given partition element
  $\Delta_{k,\ell}$.  Let $B_n'=\pi^{-1}(B_n)\cap\Delta_{k,\ell},$ and
  $D_n'=F^{-\ell}B_n'\subset\Delta_0.$ First of all, consider
  $x\in\Delta_0$, and let
  \[
    \hat{S}(n,x)=\sharp\{\, j<n :F^j(x,0)\in B_n' \,\}.
  \]
  We now apply Theorem~\ref{thm:infiniteSBC} to the sequence $D_{n}'$
  to obtain (eventually in $n$)
  \[
  \sum_{k = 1}^{n^{\frac{1}{\alpha + \varepsilon}}} \hat{\nu}
  (D_{k^{\alpha+\varepsilon}}') \leq \sum_{k=1}^{n} 1_{D_k'} (F^k(x))
  \leq \sum_{k=1}^{n^{\frac{1}{\alpha-\varepsilon}} }
  \hat{\nu}(D_{k^{\alpha-\varepsilon}}'),
  \]
where we recall $\hat\nu$ is the $\hat{F}$-invariant measure.
  We translate this immediately to a statement on returns to $B_n'$ by
  noting the following: we pass through $B_n'$ at most once between
  successive returns to the base of the tower, and by definition of
  $\nu$, we have $\nu(A)=\hat{\nu}(F^{-\ell}(A))$ in the case
  $A\subset\Delta_{k,\ell}$. Hence there exists $n_0(\Delta_{k,\ell})$ such
that for all $n\geq n_0$
  \begin{equation} \label{eq.tower-nu-SBC}
	\sum_{k = 1}^{n^{\frac{1}{\alpha + \varepsilon}}} \nu
      (B_{k^{\alpha+\varepsilon}}')
			\leq \sum_{k=1}^{n} 1_{B_k'} (F^k(x))
	\leq \sum_{k=1}^{n^{\frac{1}{\alpha-\varepsilon}} }
    \nu(B_{k^{\alpha-\varepsilon}}').
	 \end{equation}

  We now study the preimages $\pi^{-1}(B_n)$ on the whole tower, using
 the assumption $\mu=\pi_{*}\nu$ is $\sigma$-finite.  We might expect equation
  \eqref{eq.tower-nu-SBC} to apply to $(f, X,\mu)$ by summing over all
  relevant $B_n'$ that contain a $\tilde{x}_{k,\ell}\in
  \pi^{-1}(\tilde{x})$. However, the eventual bounds in \eqref{eq.tower-nu-SBC} depend
on $\Delta_{k,\ell}$, and since we have a countable infinite number of such sets
we cannot get uniform asymptotic estimates in $n$.
In the current scenario, and as considered in \cite{GNO} (in the finite measure
  case), it suffices to work with a truncated tower
  \[
  \Delta_{N(\delta)} := \cup \{\, \Delta_{k,\ell} : k\leq N(\delta),
  \ell < \min \{ N(\delta), R_k \} \,\},
  \]
  and for any $\delta>0$, choose $N(\delta)>0$ such that
  \[
  \mu(B_n \cap \pi (\Delta_{N(\delta)})) \geq (1 - \delta) \mu(B_n).
  \]
  The existence of $N(\delta)>0$ follows from the fact that
each $\Delta_{k,\ell}$ is identified with a subinterval in $X$ under
the projection $\pi$. Each $\pi^{-1}(\tilde{x})$ is an interior point of
$\Delta_{k,\ell}$, and since $B_n$ is a sequence of balls with $\mu(B_n)\to 0$,
we can find $\Lambda_k\subset\Lambda_0$ with $f^{\ell}(\Lambda_k)\supset B_n$
for all $n$ sufficiently large. For
  every $\tilde{x}_{k,\ell}\in\Delta_{N(\delta)}$ we can repeat the
  derivation of equation \eqref{eq.tower-nu-SBC}. We proceed as
  follows.

  We define $\hat{S}_{\Delta_{N(\delta)}} (n,x)$ by
  \[
  \hat{S}_{\Delta_N}(n,x)=\sharp\{\, j<n:F^j(x,0)\in
  \cup_{B_j'\subset\Delta_{N(\delta)}} B_j' \,\},
  \]
  and similarly for $\hat{S}_\Delta (n,x)$.
  From \eqref{eq.tower-nu-SBC} we obtain that
  \[
  \hat{S}_{\Delta_{N(\delta)}} (n,x) \geq (1-\delta) \sum_{k =
    1}^{n^{\frac{1}{\alpha + \varepsilon}}} \nu (\pi^{-1} B_{k^{\alpha
      + \varepsilon}} ),
  \]
  if $n$ is large enough. Here we used that $N(\delta)$ is finite, and
  hence there are a finite number of components we are considering for
  each $\delta$ fixed. The asymptotic properties of the
  Borel--Cantelli results thus carry over.

  We have $\hat{S}_\Delta (n,x) \geq \hat{S}_{\Delta_{N(\delta)}}
  (n,x)$ and so for sufficiently large $n$
  \[
  \hat{S}_{\Delta} (n,x) \geq (1-\delta) \sum_{k =
    1}^{n^{\frac{1}{\alpha+\varepsilon}}} \nu (\pi^{-1} \left(B_{k^{\alpha +
      \varepsilon}}\right) ),
  \]
 Since $\mu = \nu \circ \pi^{-1}$, we obtain
  \[
  S_n (x) \geq (1-\delta) \sum_{k =
    1}^{n^{\frac{1}{\alpha+\varepsilon}}} \mu (B_{k^{\alpha +
      \varepsilon}}),
  \]
 where $S_n (x) = \sharp \{\, j \leq n : f^j (x) \in B_j \,\}.$
  Since $\delta >0$ is arbitrary, this proves the theorem.
\end{proof}

\begin{proof}[Proof of Corollary~\ref{cor.towerSBC}]
  Relative to the proof of Theorem~\ref{thm:tower} we now get an
  almost sure upper bound for $\hat{S}_\Delta (n,x)$.  We have
  \[
  \mu (B_n \cap \pi(\Delta_{N (\delta)})) \leq \mu (B_n).
  \]
  In the same way as above, we then obtain that
  \[
  \hat{S}_{\Delta_{N(\delta)}} (n,x) \leq \sum_{k =
    1}^{n^{\frac{1}{\alpha - \varepsilon}}} \nu (\pi^{-1}
  (B_{k^{\alpha - \varepsilon}}) ).
  \]
  With inequalities in this direction, we cannot in general estimate
  $\hat{S}_\Delta (n,x)$ by $\hat{S}_{\Delta_{N(\delta)}} (n,x)$.
  Let
  \[
  \hat{T}_{N(\delta)} (n,x) = \sharp \{\, j \leq n : F^j (x,0) \not \in
  \Delta_{N(\delta)} \cap \pi^{-1} (B_n) \,\}
  \]
  count the visits to the ``tail'' $(\Delta \setminus
  \Delta_{N(\delta)}) \cap \pi^{-1} (B_n)$.  We then have
  \[
  \hat{S}_\Delta (n,x) \leq \hat{S}_{\Delta_{N(\delta)}} (n,x) +
  \hat{T}_{N(\delta)} (n,x),
  \]
  However, since by assumption $\pi^{-1}(B_n)$ is contained in a
  finite number of $\Delta_{k,\ell}$, there exists $N'$, such that
  $\Delta_{k,\ell}\cap\pi^{-1}(B_n)=\emptyset$ for all $k,\ell\geq
  N'$.  We then just choose $N(\delta)>N'$ so that
  $\hat{T}_{N(\delta)} (n,x) = 0$. The conclusion now follows.
\end{proof}

\section{Appendix}\label{sec.appendix}

\begin{proof}[Proof of Lemma~\ref{slem.gamma}]

We have $\mathcal{P} = \{X_i\}_{i \in\mathcal{I}}$, with $\mathcal{I}$ an index set
$\subset\mathbb{N}$. There is a one-to-one
correspondence between elements of the partition $\mathcal{P}_{n}$ and
sequences $(i_0,i_1,\ldots,i_{n-1}) \in \mathcal{I}^n$, which is characterised
by the property that for $X_{i_0,i_1,\ldots,i_{n-1}} \in
\mathcal{P}_n$ we have
\[
f^k (X_{i_0,i_1,\ldots,i_{n-1}}) \subset X_{i_k} \qquad \text{for }
k = 0, 1, \ldots, n - 1.
\]
Note that some sequences $(i_0,i_1,\ldots,i_{n-1})$ are not admissible, unless
we assume $f(X_i)=X$ (since we only require big images from assumption \ref{G-M1}). However, this fact is
of no consequence to what follows.
We may assume that the diameter of the interval $X$ is one. The
bounded distortion assumption~\ref{G-M3}
then implies that
\[
|X_{i_0,i_1,\ldots,i_{n-1}}| \leq e^{C \tau^n} |X_{i_0, i_1 \ldots
i_{n-2}}| \cdot |X_{i_{n-1}}|.
\]

The measure $\mu$ is absolutely continuous with respect to the
Lebesgue measure, and there exists a constant $c$ such that the
density of $\mu$ is bounded by $c$ and bounded away from zero by
$c^{-1}$. Consider
\[
Q_n := |\{\, x \in X : \phi (f^j(x)) < \gamma_n \text{ for all }
j < n \,\}|.
\]
Since the density of $\mu$ is bounded, we have $P_n \leq c Q_n$.

We proceed by induction to estimate $Q_n$. Let $n$ be fixed and define
\[
Q_{n,k} = |\{\, x \in X : \phi (f^j(x)) < \gamma_n \text{ for all }
j < k \,\}|.
\]
In particular, we have $Q_n = Q_{n,n}$.

Clearly, we have
\[
Q_{n,1} = \mu \{\, x \in X : \phi (x) < \gamma_n \,\} = 1 - \mu \{\, x
\in X : \phi (x) \geq \gamma_n \,\} \leq (1 - D_0 \gamma_n^{- \beta}),
\]
for some constant $D_0$. Suppose that we have
\[
Q_{n,k-1} \leq e^{\sum_{j=0}^{k-1} C \tau^j} (1 - D_0 \gamma_n^{-
  \beta})^{k-1},
\]
for some $k$. Then
\begin{align*}
  Q_{n,k} &= \sum_{\substack{i_0,i_1,\ldots,i_{k-1} \\ \phi(i_j) <
      \gamma_n }} |X_{i_0,i_1,\ldots,i_{k-1}}| \\ & \leq
  \sum_{\substack{i_0,i_1,\ldots,i_{k-1} \\ \phi(X_{i_j}) < \gamma_n
  }} e^{C \tau^k} |X_{i_0, i_1, \ldots, i_{k-2}}| \cdot |X_{i_{k-1}}|
  \\ & \leq e^{C \tau^k} Q_{n,k-1} \sum_{i_{k-1} : \phi (X_{i_{k-1}})
    < \gamma_n} |X_{i_k}| = e^{C \tau^k} Q_{n,k-1} Q_{n,1} \\ &\leq
  e^{\sum_{j=0}^k C \tau^j} (1 - D_0 \gamma_n^{-\beta})^k.
\end{align*}
In particular, we have
\[
Q_n = Q_{n,n} \leq e^{\frac{C}{1 - \tau}} (1 - D_0
\gamma_n^{-\beta})^n
\]
and
\[
P_n \leq c Q_n \leq c e^{\frac{C}{1 - \tau}} (1 - D_0
\gamma_n^{-\beta})^n,
\]
which finishes the proof.
\end{proof}

\end{document}